\newtheorem{theorem}{Theorem}[section]
\newtheorem{proposition}[theorem]{Proposition}
\theoremstyle{remark}
\newtheorem{remark}[theorem]{Remark}
\theoremstyle{definition}
\newtheorem{definition}{Definition}[section]
\newtheorem{example}{Example}[section]
\newcommand{\R}{\mathbb{R}}
\newcommand{\N}{\mathbb{N}}
\newcommand{\Rn}{\R^n}
\newcommand{\Sn}{\mathbb{S}^{n-1}}
\renewcommand{\O}{\Omega}
\newcommand{\Od}{\O_{\delta}}
\newcommand{\Wd}{\omega_{\delta}}
\newcommand{\dx}{\,\mathrm{d}x}
\newcommand{\grad}{\ddot{\mathcal{G}}_{\delta}}
\newcommand{\diver}{\mathcal{D}_{\delta}}
\newcommand{\U}{\mathbb{U}}
\newcommand{\Uz}{\U_0}
\newcommand{\Udz}{\U_{\delta,0}}
\newcommand{\Q}{\mathbb{Q}}
\newcommand{\Qd}{\mathbb{Q}_{\delta}}
\newcommand{\Qds}{\Qd^{\text{a}}}
\newcommand{\thickbar}{\mathpalette\@thickbar}
\newcommand{\@thickbar}[2]{{#1\mkern1.5mu\vbox{
  \sbox\z@{\(#1\mkern-1.5mu#2\mkern-1.5mu\)}%
  \sbox\tw@{\(#1\overline{#2}\)}%
  \dimen@=\dimexpr\ht\tw@-\ht\z@-.8\p@\relax
  \hrule\@height.8\p@ 
  \vskip\dimen@
  \box\z@}\mkern1.5mu}
}
\newcommand{\Qdx}{\thickbar{\mathbb{Q}}_{\delta}}
\newcommand{\diverx}{\thickbar{\mathcal{D}}_{\delta}}
\newcommand{\knl}{{\ddot{\kappa}}}
\newcommand{\qnl}{{\ddot{q}}}
\newcommand{\pnl}{{\ddot{p}}}
\newcommand{\mres}{\mathbin{\vrule height 1.6ex depth 0pt width
0.13ex\vrule height 0.13ex depth 0pt width 1.3ex}}
\newcommand{\cM}{\mathcal{M}(\cl\O;\Rn)}
\newcommand{\Id}{{\hat{I}}}
\newcommand{\Ip}{{\check{I}}}
\newcommand{\id}{{i}_{\delta}}
\newcommand{\idloc}{i_{\text{loc}}}
\newcommand{\Idloc}{\Id_{\text{loc}}}
\newcommand{\Iploc}{\Ip_{\text{loc}}}
\newcommand{\ind}{\mathbb{I}}
\newcommand{\Jdb}{J_{\delta,\beta}}
\newcommand{\ad}{{\hat{a}}}
\newcommand{\ap}{{\check{a}}}
\newcommand{\kadm}{\mathbb{A}}
\newcommand{\kadmd}{\kadm_{\delta}}
\newcommand{\bite}{\overset{b}{\rightharpoonup}}
\newcommand{\wsto}{\overset{*}{\rightharpoonup}}
\DeclareMathOperator{\cl}{cl}
\DeclareMathOperator{\innre}{int}
\DeclareMathOperator{\graph}{graph}
\DeclareMathOperator{\Div}{div}
\DeclareMathOperator*{\argmin}{arg\,min}
\DeclareMathOperator*{\esssup}{ess\,sup}
\newcommand{\Divx}{\overline{\Div}}
\title{Nonlocal basis pursuit: Nonlocal optimal design of conductive domains in
the vanishing material limit}
\author{\href{https://orcid.org/0000-0002-3987-7745}{\includegraphics[scale=0.06]{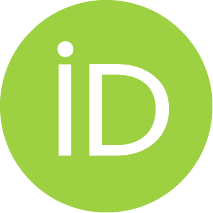}\hspace{1mm}Anton Evgrafov}\\
Department of Mathematical Sciences\\
Aalborg University\\
DK--9210 Aalborg, Denmark\\
\texttt{anev@math.aau.dk}\\
\And
\href{https://orcid.org/0000-0001-5750-1349}{\includegraphics[scale=0.06]{orcid.pdf}\hspace{1mm}Jos\'e C. Bellido}\\
Department of Mathematics\\
University of Castilla-La Mancha\\
13071--Ciudad Real, Spain\\
\texttt{JoseCarlos.Bellido@uclm.es}\\
}
\begin{document}
\maketitle

\begin{abstract}
  We consider the problem of optimal distribution of a limited amount of conductive material in systems governed by local and non-local scalar diffusion laws.
  Of particular interest for these problems is the study of the limiting case, which appears when the amount of available material is driven to zero.  Such a limiting process is of both theoretical and practical interest and continues to be a subject of active study.  In the local case, the limiting optimization problem is convex and has a well understood basis pursuit structure.  Still this local problem is quite challenging both analytically and numerically because it is posed in the space of vector-valued Radon measures.

  With this in mind we focus on identifying the vanishing material limit for the corresponding nonlocal optimal design problem.  Similarly to the local case, the resulting nonlocal problem is convex and has the basis pursuit structure in terms of nonlocal antisymmetric two-point fluxes.  In stark contrast with the local case, the nonlocal problem admits solutions in Lebesgue spaces with mixed exponents.  When the nonlocal interaction horizon is driven to zero, the ``vanishing material limit'' nonlocal problems provide a one-sided estimate for the corresponding local measure-valued optimal design problem.

  The surprising fact is that in order to transform the one-sided estimate into a true limiting process it is sufficient to disregard the antisymmetry requirement on the two-point fluxes.  This result relies on duality and requires generalizing some of the well known nonlocal characterizations of Sobolev spaces to the case of mixed Lebesgue exponents.
\end{abstract}



\keywords{Control in the coefficients \and nonlocal diffusion \and Lebesgue spaces with mixed exponents \and biting convergence \and Fenchel duality}

\noindent  \paragraph*{MSC2010 classification:} 49J21, 49J45, 49J35, 80M50


\section{Introduction}\label{sec:intro}

Let \(\O\) be a connected bounded open domain in \(\R^n\), \(n \geq 2\) with Lipschitz boundary \(\Gamma=\partial\Omega\).
We assume that \(\Gamma\) is subdivided into two open disjoint parts \(\Gamma_D\) and \(\Gamma_N\), on which the two different types of
boundary conditions (Dirichlet and Neumann) will be prescribed.
Let further \(\cM\) be the space of finite vector-valued Radon measures on the closure \(\cl \O\) of \(\O\).
Let us consider the following convex optimization problem, which can be thought of as the analogue of the classical Michell's problem,
see~\cite{michell1904a, strang1983hencky, bendsoe1993michell,
allaire1993optimal, bouchitte2001characterization, bouchitte2008michell, Olbermann2017,
evgrafov2020sparse, babadjian2021shape, bendsoe2013topology,
cherkaev2012variational, allaire2012shape,
bolbotowski2022optimal}, for the case of scalar diffusion:
\begin{equation}\label{eq:limiting_local_dual_rig}
  p^*_{\text{loc}} = \inf_{q\in \cM}[ |q|(\cl\O) + \ind_{\{F\}}(\Divx q)].
\end{equation}
In the problem above,  \(q\in \cM\) is the heat flux (in the heat conduction terminology which we will utilize throughout the manuscript)
corresponding to the heat source \(F \in \mathcal{M}(\cl\O;\R)\),
\(|q|(\cl\O)\) is the total variation of \(q\) over \(\cl\O\), and
\[\ind_S(x) = \begin{cases}0,&\qquad x \in S\\ +\infty,&\qquad x \not\in S,\end{cases}\] is the indicator function associated with a set \(S\).
Furthermore, \(\Divx\) is the divergence operator, which is defined to satisfy the integration by parts formula:
\begin{equation}\label{measdiv}
 \int_{\cl\O} \phi(x)\,\mathrm{d}[\Divx q(x)] = -\int_{\cl\O} \nabla\phi(x)\cdot\,\mathrm{d}q(x),
 \quad \forall \phi \in C^1(\cl\O), \phi|_{\Gamma_D} = 0.
\end{equation}
The most typical situation arises when \begin{equation}\label{volheatsource}F = f\mathcal{H}^{n}\mres \O + g\mathcal{H}^{n-1}\mres \Gamma_N,\end{equation} where
\(\mathcal{H}^{k}\) is the \(k\)-dimensional Hausdorff measure,
 and \(f \in L^1(\O)\) and \(g \in L^1(\Gamma_N)\) are the volumetric and boundary heat sources.

Note that~\eqref{eq:limiting_local_dual_rig} amounts to finding a sparse solution, as given by the smallest possible non-smooth sparsity encouraging norm \(|q|(\cl\O)\), to an underdetermined system of linear equations \(\Divx q = F\).
From this perspective~\eqref{eq:limiting_local_dual_rig} is a basis pursuit problem~\cite{boyd2004convex}.
Problems of this class are well known in the context of robustly recovering sparse solutions in many applied scientific disciplines.
We refer the interested reader to ``LASSO'' (least absolute shrinkage and selection operator) methods in statistics~\cite{santosa1986linear,tibshirani1996regression}, robust denoising of images~\cite{chambolle2011first}, or sparse filtering in compressed sensing~\cite{davenport2013fundamentals}.

 As we shall recall in Section~\ref{subsec:lvan} and~\ref{sec:analysis_loc}, problem~\eqref{eq:limiting_local_dual_rig} arises naturally as a limit of problems of optimally distributing conductive material in \(\Omega\), when the amount of available material is driven to zero, which is a well known fact in the existing
  literature~\cite{strang1983hencky,bendsoe1993michell,allaire1993optimal,bouchitte2001characterization,
  bouchitte2008michell,Olbermann2017,babadjian2021shape,
  bendsoe2013topology,cherkaev2012variational,allaire2012shape}.
With such an interpretation, each \(q\in \cM\) solving~\eqref{eq:limiting_local_dual_rig} is a limiting heat flux, from which approximately optimal conductivity distributions corresponding to vanishingly small but positive material amounts can be easily reconstructed.
The possibility of such a reconstruction explains the continuing interest in~\eqref{eq:limiting_local_dual_rig}, and even more so in its linear elasticity ``cousin,''
both within engineering and applied mathematics communities.
One can go as far as to say that this is one of the most important open problems in the area of optimal design nowadays, whose  efficient solution would lead to a method for computing lines of principal action in optimal elastic structures.  All this motivates us to look at a novel nonlocal approach to this important problem.

The main objective of this study is to derive a natural nonlocal analogue of the basis pursuit problem~\eqref{eq:limiting_local_dual_rig} by computing a  vanishing material limit for a family of recently introduced nonlocal optimal design problems for scalar diffusion, see~\cite{andres2015nonlocal,andres2015type,andres2017convergence,AnMuRo19,evgrafov2019non,AnMuRo21,eb2021}.
We will also study the relation of this limiting nonlocal problem with its local analogue~\eqref{eq:limiting_local_dual_rig}, as the parameter quantifying the nonlocality of the problem, called nonlocal horizon, is driven to zero.

We will show that unlike~\eqref{eq:limiting_local_dual_rig}, its nonlocal analogue admits solutions in certain function spaces, namely in Lebesgue spaces with mixed exponents.
We will also outline a small modification to the limiting nonlocal problem, which allows us to obtain their convergence  towards~\eqref{eq:limiting_local_dual_rig} in the appropriately defined sense.

For the remainder of this paper, primarily to avoid dealing with nonlocal Neumann boundary conditions, we will focus on the Dirichlet problem only and assume \(\Gamma_D=\Gamma\) and \(\Gamma_N=\emptyset\).
In particular, the test space in~\eqref{measdiv} is \(C^1_0(\cl\O)\).
Furthermore, we will assume that the volumetric heat source \(f\), see~\eqref{volheatsource}, is in \(L^2(\O)\).

The outline of the paper is graphically presented in Figure~\ref{roadmap}, and is as follows.
In Section~\ref{sec:statement} we recall and summarize the relevant results for the local and nonlocal primal and dual optimal design problems corresponding to a positive amount of available conductive material.  We then formally derive the limiting problems corresponding to the zero amount of available material. The novel contribution in this section is Subsection~2.4, where we formally derive the limiting nonlocal problem~\eqref{eq:prob_nl}, which should be viewed as the nonlocal analogue of~\eqref{eq:limiting_local_dual_rig}.
In Section~\ref{sec:analysis_loc} we provide a proof of the fact that the previously computed formal vanishing material limit in the local case is in fact also a rigorous limit in the appropriate sense.  Whereas these results are not new, we present them in order to keep the paper self-contained, as well as to preview the techniques which will be later applied to the nonlocal case. Namely, our approach is that we sandwich the problem for a positive amount of material between the vanishing material model and its \(L^2\)-Tikhonov regularization for the appropriately selected regularization parameter, see Theorem~\ref{thm:vanish_limit_local}.  We also rely heavily on Fenchel duality, as some calculations are much more straightforward for the dual problems.
Section~\ref{sec:analysis} and~\ref{sec:gammaconv3} contain the main novel contributions of the paper.
In Subsection~\ref{sec:existence} we establish the existence of solutions for the nonlocal vanishing material problem in Lebesgue spaces with mixed exponents, see Theorem~\ref{thm:existence}. This result relies on the possibility of strengthening the so-called biting convergence in the functional space we work in by utilizing the antisymmetry of the two-point fluxes, see Propositions~\ref{prop:compact} and~\ref{prop:divcont}. Subsections~\ref{sec:fenchel_duals}--\ref{sec:gammaconv1} carry out the plan, which has been previously outlined for the local problem in Section~\ref{sec:analysis_loc}, in the nonlocal case, with the main result being Theorem~\ref{thm:vanish_limit}. Finally, Subsection~\ref{sec:gammaconv2} and Section~\ref{sec:gammaconv3} deal with convergence of the nonlocal vanishing material problem, and its modified version, towards the local limit~\eqref{eq:limiting_local_dual_rig}, when the nonlocal horizon \(\delta\) is driven to zero.
The main resut here is Theorem~\ref{thm:nonlocal_approximation}, which relies on the generalization of the nonlocal characterisation of Sobolev functions by Bourgain, Brezis, Mironescu, Ponce, and others, see~\cite{bourgain2001another,ponce2004new,ponce2004estimate}, to Lebesgue spaces with mixed exponents -- an interesting result in its own right, see Proposition~\ref{prop:compactness}.

\begin{figure}[htb]
  \centering
    \begin{center}
  \begin{tikzpicture}[domain=0:2]
    \draw[color=gray,step=.5cm,dashed,very thin] (-0.5,-.5) grid (3,3);
    \draw[very thick,->,color=gray] (-1,0) -- (3.5,0);
    \draw[very thick,->,color=gray] (0,-1) -- (0,3.5);
    \draw (5,0) node[below] {volume of material, \(\gamma\)};
    \draw (0,3.5) node[above] {nonlocal horizon, \(\delta\)};
    \draw (2.5,0) node[circle,fill=gray]{} ;
    \draw (0,0)   node[circle,fill=gray]{} ;
    \draw[very thick, dashed, ->, color=black] (2.25,0) -- (0.25,0.0) {};
    \draw (2.5,2.5) node[circle,fill=gray]{} ;
    \draw[very thick, dashed, ->, color=black] (2.5,2.25) -- (2.5,0.25) {};
    \draw (0,2.5) node[circle,fill=gray]{} ;
    \draw[very thick, dashed, ->, color=black] (2.25,2.5) -- (0.25,2.5) {};
    \draw[very thick, dashed, ->, color=black] (0,2.25) -- (0,0.25) {};
    \draw (-1.75,0.1) node [above]{\eqref{eq:limiting_local_dual_rig}, \eqref{eq:limiting_local_predual}, \eqref{eq:limiting_local_predual_relax}};
    \draw (3.75,0.1) node [above]{\eqref{eq:compliance_local_primal}, \eqref{eq:compliance_local_dual}};
    \draw (3.9,2.4) node [below]{\eqref{eq:compliance_nl_primal}, \eqref{eq:compliance_nl_mixed}};
    \draw (-2.25,2.4) node [below]{\eqref{eq:prob_nl}, \eqref{eq:limiting_nl_dual_asym}, \eqref{eq:prob_nl_nosym}, \eqref{eq:limiting_nl_dual_nosym}};
    \draw (3.0,1.25) node [right] {Theorem~\ref{thm:primal_summary}};
    \draw (-0.5,1.25) node [left] {Theorem~\ref{thm:nonlocal_approximation}};
    \draw (1.25,-0.5) node [below] {Theorem~\ref{thm:vanish_limit_local}};
    \draw (1.25,3.0) node [above] {Theorem~\ref{thm:vanish_limit}};
    \draw [color=black,opacity=0.3,fill=lightgray]
    (-4,-1) -- (-1,-1) -- (5,5) -- (-4,5) -- cycle;
    \draw (0,5) node [below] {Original contribution of this work};
  \end{tikzpicture}
  \end{center}
  \caption{Graphical roadmap of the paper in the two-parameter space \((\gamma,\delta)\).  Vertices of the diagram correspond to the optimization problems,
  and their labels to the equation numbers defining the relevant problems.
  Edges of the diagram correspond to the limiting processes along the corresponding axes, and their labels to the corresponding results discussing the convergence.  Novel results established in this work are emphasized.}
  \label{roadmap}
\end{figure}
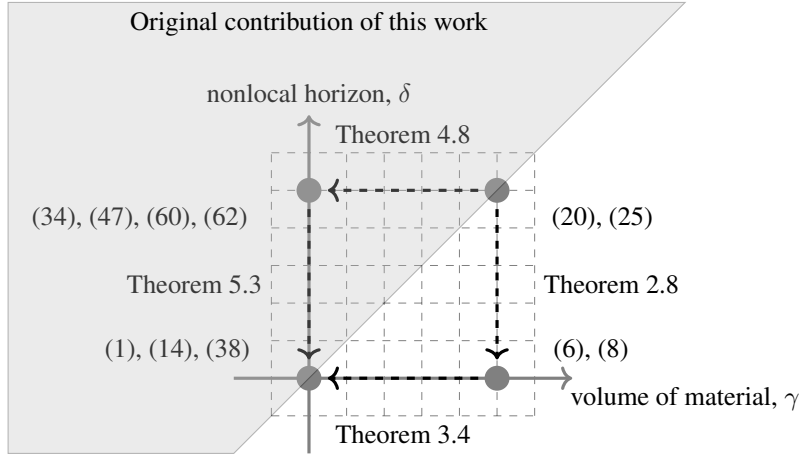

\section{Formal derivation of the limiting nonlocal problem}\label{sec:statement}

\subsection{Local optimal design problems}\label{subsec:local}

Let us start the derivation for the local case by recalling a classical problem, which can be viewed as that of distributing spatially varying heat conductive
material in a given design domain in order to minimize the weighted average steady-state temperature, see~\cite{cea1970example}.
For given constants \(0<\underline{\kappa} < \overline{\kappa}<\infty\) and
\(\gamma \in (\underline{\kappa},\overline{\kappa})\) we define the sets of
admissible conductivities \(\kappa\), temperatures \(u\), and heat fluxes \(q=-\kappa \nabla u\)
by
\begin{equation}
  \begin{aligned}
    \kadm &= \bigg\{\, \kappa \in L^\infty(\O)
    \mid \kappa(x) \in [\underline{\kappa},\overline{\kappa}], \text{\ for almost all \(x\in\O\)},
    \int_{\O} \kappa(x)\dx \leq \gamma|\O|
    \,\bigg\},\\
    \Uz &= \{\, u \in H^1_0(\O) \,\}, \qquad
    \Q = H(\Div,\O;\Rn),\qquad\text{and}\\
    \Q(f)&= \{\, q \in \Q \mid \Div q=f \,\}.
    \end{aligned}
\end{equation}
We note that here and throughout the manuscrupt we use \(|\cdot|\) loosely to denote the
``size'' of objects, such as Lebesgue measure of sets and Eucledian norms of vectors,
with no ambiguity arising from the context.
Recall that the steady state temperature of a solid for a given conductivity
distribution \(\kappa \in \kadm\) can be determined using the Dirichlet principle,
namely
\begin{equation}\label{eq:Dirloc}
  \begin{aligned}
    u &= \argmin_{v\in \Uz} \Iploc(\kappa;v), &\quad&\text{where}\\
    \Iploc(\kappa;v) &= \frac{1}{2}\int_{\O} \kappa(x)|\nabla v(x)|^2\dx - \ell(v), &\quad&\text{and}\\
    \ell(v) &= \int_{\O} f(x) v(x)\dx.
  \end{aligned}
\end{equation}
The associated optimal design problem, see~\cite{cea1970example}, amounts to
finding a distribution of the conductivities, which maximizes \(\Iploc\).
Consequently, this problem can be stated as the following saddle-point problem:
\begin{equation}\label{eq:compliance_local_primal}
  \begin{aligned}
    &\max_{\kappa \in\kadm} \min_{u\in \Uz} \Iploc(\kappa;u).
  \end{aligned}
\end{equation}
As a direct consequence of the concavity and upper semicontinuity of the map
\(\kappa\mapsto \min_{u\in \Uz} \Iploc(\kappa;u)\) it is not difficult to check that~\eqref{eq:compliance_local_primal}
admits an optimal solution.
In a very real sense this problem is a prototype for a wide class of
\emph{topology optimization} problems with applications in all branches of engineering
sciences, see for example~\cite{bendsoe2013topology,cherkaev2012variational,allaire2012shape}.

We now replace Dirichlet's variational principle with that of Kelvin for describing
the steady state of the heated solid.
This variational principle is formulated in terms of the heat flux
\(q \in \Q(f)\) as the primary unknown:
\begin{equation}\label{eq:Kelloc}
  \begin{aligned}
    &q = \argmin_{p\in\Q(f)} \Idloc(\kappa;p),\quad\text{where}\\
    &\Idloc(\kappa;p) = \frac{1}{2}\int_{\O} \kappa^{-1}(x)|p(x)|^2\dx.
  \end{aligned}
\end{equation}
It is not difficult to check that the optimal values for the
problems~\eqref{eq:Dirloc} and~\eqref{eq:Kelloc} are related through the equality \(\Idloc(\kappa;q)+\Iploc(\kappa;u)=0\).
Therefore, the saddle point problem~\eqref{eq:compliance_local_primal}
can be replaced with a single minimization problem with respect to both the
control coefficient \(\kappa\) and the flux variable \(q\):
\begin{equation}\label{eq:compliance_local_dual}
    \min_{(\kappa,q)\in \kadm\times\Q(f)}
    \Idloc(\kappa;q).
\end{equation}
Not surprisingly, in view of its equivalence with~\eqref{eq:compliance_local_primal},
the problem~\eqref{eq:compliance_local_dual} also attains its minimum,
which can also be viewed as a direct consequence of the convexity of the map
\(\R_+\times \Rn\ni(\kappa,q)\mapsto \kappa^{-1}|q|^2\),
see~\cite{bendsoe2013topology,cherkaev2012variational,allaire2012shape}.
Note that the flux-based formulations~\eqref{eq:Kelloc} and~\eqref{eq:compliance_local_dual}
extend to the case \(\underline{\kappa}=0\) without any changes.

\subsection{Vanishing material fraction limit for the local problem}\label{subsec:lvan}

Let us now consider a particular formal limit of~\eqref{eq:compliance_local_dual},
obtained in the vanishing volume fraction case \(\gamma\searrow 0\), as we previously outlined in the introduction.
Let us introduce the scaled conductivity \(\kappa_{\gamma} = \gamma^{-1}\kappa\).
Note that \(\kappa_{\gamma} \in [\gamma^{-1}\underline{\kappa}, \gamma^{-1}\overline{\kappa}]\)
while \(\int_{\O}\kappa_{\gamma} \leq |\Omega|\), which is only possible in the limit
when \(\underline{\kappa} = 0\).
Let us define the admissible set for the scaled conductivities
\begin{equation}\label{eq:kadm_gamma_loc}
  \kadm^\gamma = \bigg\{\, \kappa_\gamma \in L^1(\O)\mid
  0 \le \kappa_\gamma(x) \le \gamma^{-1}\overline{\kappa}, \text{a.e.\ in \(\O\)},
  \int_{\O} \kappa_\gamma(x)\dx \leq |\O|\,\bigg\}.
\end{equation}
Note that \(\Idloc(\gamma^{-1}\kappa;\cdot) = \gamma\Idloc(\kappa;\cdot)\), and therefore
for any \(q \in \Q(f)\) it holds that
\begin{equation}\label{eq:idloc}
  \idloc^\gamma(q):=\min_{\kappa_\gamma \in \kadm^{\gamma}}\Idloc(\kappa_\gamma;q) =
  \gamma\min_{\kappa\in \kadm}\Idloc(\kappa;q).
\end{equation}
Consequently, the optimal design problem~\eqref{eq:compliance_local_dual}, up to
a constant scaling factor \(\gamma\), can be written in terms of fluxes only as
\begin{equation}\label{eq:compl_flux_loc}
  \hat{p}^*_{\text{loc},\gamma} = \min_{q\in\Q(f)} \idloc^\gamma(q).
\end{equation}
Since we are interested in considering a formal limit \(\gamma\searrow 0\),
it is plausible to assume that the upper constraint
\(\kappa_{\gamma}\leq \gamma^{-1}\overline{\kappa}\)
ultimately becomes inactive and therefore will be dropped from~\eqref{eq:kadm_gamma_loc}.
For each \(\gamma>0\) the set \(\kadm^\gamma\) is just a non-empty intersection of a closed ball
\(\{\,\kappa \in L^\infty(\O) \colon
\esssup_{x\in\O}
|\kappa_\gamma(x)- \gamma^{-1}\overline{\kappa}/2|
\leq \gamma^{-1}\overline{\kappa}/2\,\}\), and a weak$^*$-closed in \(L^\infty(\O)\) half-space
\(\{\, \kappa_\gamma \in L^\infty(\O)\colon
\int_{\O} \kappa_\gamma(x)\dx \leq |\O|\,\}\).
It is therefore weak$^*$-sequentially compact in view of Banach--Alaoglu theorem.
Furthermore, the objective function of~\eqref{eq:idloc} is weakly$^*$ lower semicontinuous owing to Tonelli's theorem.
These two facts imply the existence of solutions for the problem~\eqref{eq:idloc} using the direct method of calculus of variations.
Unfortunately, for \(\gamma=0\) we lose the boundedness in \(L^\infty(\O)\) and
have to resort to other tools; also see~\cite{bouchitte2001characterization}.
\begin{proposition}\label{prop:id0}
  For each \(q \in L^1(\O)\) the formal limiting optimization problem
  \begin{equation}\label{eq:formalproblem}
    \min_{\kappa_0 \in \kadm^0}\Idloc(\kappa_0;q)
  \end{equation}
  admits an optimal solution defined pointwise by \(\hat{\kappa}_0(x)=\hat{c}_0 |q(x)|\)
  with the normalization constant \(\hat{c}_0 = |\O|/\|q\|_{L^1(\O;\Rn)}\).
\end{proposition}
\begin{proof}
  The proof can be done for example using a duality argument.
  Indeed, let us put \(\kadm^0_+ = \{\, \kappa_0 \in L^1(\O)\mid \kappa_0(x)\geq 0,
  \text{a.e.\ in \(\O\)}\,\}\).
  We can then write:
  \begin{equation*}
    \begin{aligned}
      \frac{1}{2|\O|}\|q\|_{L^1(\O;\Rn)}^2
      &= \Idloc(\hat{\kappa}_0;q)
      \geq
      \inf_{\kappa_0\in\kadm^0}\Idloc(\kappa_0;q)
      \\&=
      \inf_{\kappa_0\in\kadm^0_+}\sup_{\xi\geq 0}
      \bigg[\Idloc(\kappa_0;q) + \xi\bigg(\int_{\O}\kappa_0(x)\dx-|\O|\bigg)\bigg]
      \\
      &\geq
      \sup_{\xi\geq 0}\inf_{\kappa_0\in\kadm^0_+}
      \bigg[\Idloc(\kappa_0;q) + \xi\bigg(\int_{\O}\kappa_0(x)\dx-|\O|\bigg)\bigg]
      \\
      &=
      \sup_{\xi\geq 0}\begin{cases}
      0,\quad &\text{if\ \(\xi = 0\)},\\
      \Idloc(\hat{\kappa}_{0,\xi};q) + \xi\left(\int_{\O}\hat{\kappa}_{0,\xi}(x)\dx-|\O|\right), \quad &\text{if\ \(\xi > 0\)},
      \end{cases}
      \\&=
      \sup_{\xi\geq 0}\left[\|q\|_{L^1(\O;\Rn)}[2\xi]^{1/2} - \xi|\O|\right]
      =
      \|q\|_{L^1(\O;\Rn)}[2\hat{\xi}]^{1/2} - \hat{\xi}|\O|
      =\frac{1}{2|\O|}\|q\|_{L^1(\O;\Rn)}^2,
    \end{aligned}
  \end{equation*}
  where the inner separable infimum is attained at \(\hat{\kappa}_{0,\xi}(x) = |q(x)|[2\xi]^{-1/2}\) for \(\xi>0\), and the final supremum is attained at
  \(\hat{\xi}=(2|\O|^2)^{-1}\|q\|_{L^1(\O;\Rn)}^2\).
  Therefore, \(\hat{\kappa}_0(x)\) is the optimal solution to~\eqref{eq:formalproblem}.
\end{proof}
With this in mind we evaluate
\(\idloc^0(q)=\Idloc(\hat{c}_0 |q|,q) = (2|\Omega|)^{-1}\|q\|_{L^1(\O;\Rn)}^2\),
and consequently the formal limiting problem corresponding to~\eqref{eq:compliance_local_dual}
is equivalent to
\begin{equation}\label{eq:limiting_local_dual}
  \begin{aligned}
    &\inf_{q}
    \int_{\O} |q(x)|\dx,\\
    &\text{subject to\ } \Div q = f, \quad \text{in \(\O\)}.
  \end{aligned}
\end{equation}
Unfortunately, the infimum in~\eqref{eq:limiting_local_dual} is not necessarily attained in function spaces, such as \(L^1(\O)\), owing to the lack of weak compactness; see also Example~\ref{ex:meas}.
Therefore, we have to further relax this problem in the space \(\cM\) to finally arrive at~\eqref{eq:limiting_local_dual_rig}, which is stated in the introduction.
We note further that~\eqref{eq:limiting_local_dual_rig} is the Fenchel dual of the following problem:
\begin{equation}\label{eq:limiting_local_predual}
  -pd^*_{\text{loc}} = \inf_{u \in C^1_0(\cl\O)} [\ind_{B_{C^0(\cl\O;\Rn)}}(\nabla u)-\ell(u)],
\end{equation}
where throughout the manuscript we will denote by \(B_{X}\) the closed unit ball in a normed space \(X\).
\begin{theorem}\label{thm1}
  Strong Fenchel duality holds, that is, \(p^*_{\text{loc}} = pd^*_{\text{loc}}\).
  Furthermore, whenever \(p^*_{\text{loc}}<+\infty\), the infimum in~\eqref{eq:limiting_local_dual_rig} is attained.
\end{theorem}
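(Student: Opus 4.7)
The plan is to obtain both conclusions at once from the Fenchel--Rockafellar duality theorem applied to the predual problem~\eqref{eq:limiting_local_predual}. I would cast~\eqref{eq:limiting_local_predual} as a minimisation on \(V=C^1_0(\cl\O)\) of the form \(\inf_u[G(u)+F(\Lambda u)]\), where \(\Lambda\colon V\to W=C^0(\cl\O;\Rn)\) is the continuous linear operator \(\Lambda u=\nabla u\), \(G(u)=-\ell(u)\) is a continuous linear functional, and \(F=\ind_{B_{C^0(\cl\O;\Rn)}}\) is a closed convex indicator. Working in the Riesz duality pairing \(W^*=\cM\), the Fenchel dual should recover~\eqref{eq:limiting_local_dual_rig} up to a sign.

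The key computation identifies \(\sup_{q\in\cM}[-F^*(q)-G^*(-\Lambda^*q)]\) with \(-p^*_{\text{loc}}\). Since \(G\) is linear, \(G^*(v^*)\) equals \(0\) if \(v^*=-\ell\) and \(+\infty\) otherwise, so the second term enforces \(\Lambda^*q=\ell\); unfolding \(\Lambda^*\) through the definition~\eqref{measdiv} gives \(\int_{\cl\O}\nabla u\cdot\,\mathrm{d}q=\ell(u)\) for every \(u\in V\), which after the sign substitution \(q\mapsto -q\) (leaving \(|q|(\cl\O)\) unchanged) becomes precisely \(\Divx q=F\). The first term is standard: \(F^*(q)=\sup_{\|p\|_\infty\le 1}\int p\cdot\,\mathrm{d}q=|q|(\cl\O)\). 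Consequently the dual value equals \(-p^*_{\text{loc}}\), while the primal value equals \(-pd^*_{\text{loc}}\) by definition.

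To invoke Fenchel--Rockafellar in the form that delivers both strong duality and attainment of the dual, it suffices to exhibit \(u_0\in V\) with \(G(u_0)<+\infty\), \(F(\Lambda u_0)<+\infty\), and \(F\) continuous at \(\Lambda u_0\). The choice \(u_0\equiv 0\) discharges all three, since \(F\) vanishes on the entire open \(C^0\)-ball of radius \(1\) around the origin. Finiteness of the primal value---bounded above by \(0\) at \(u_0\), and bounded below on the constraint set \(\|\nabla u\|_\infty\le 1\) via Poincar\'e combined with \(f\in L^2(\O)\)---is automatic. The theorem then yields \(-pd^*_{\text{loc}}=-p^*_{\text{loc}}\) together with attainment of the dual supremum, giving both claims: the latter assertion says exactly that a minimiser of~\eqref{eq:limiting_local_dual_rig} exists whenever \(p^*_{\text{loc}}<+\infty\). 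I do not foresee a serious obstacle, the only subtlety being careful bookkeeping on the sign in the identification \(\Lambda^*=-\Divx\) via~\eqref{measdiv}. As an independent sanity check, attainment can also be proven by the direct method: any minimising sequence for~\eqref{eq:limiting_local_dual_rig} is bounded in \(\cM=C^0(\cl\O;\Rn)^*\), so Banach--Alaoglu produces a weak-\(*\) subsequential limit \(q^*\); total variation is weak-\(*\) lower semicontinuous, and the identity \(\nabla\phi\in C^0(\cl\O;\Rn)\) for every \(\phi\in C^1_0(\cl\O)\) allows the constraint \(\Divx q=F\) to pass to the weak-\(*\) limit.
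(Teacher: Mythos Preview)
Your proposal is correct and follows essentially the same route as the paper: you apply Fenchel--Rockafellar duality to the predual~\eqref{eq:limiting_local_predual} with \(\Lambda=\nabla\), \(F=\ind_{B_{C^0(\cl\O;\Rn)}}\), and verify the qualification condition at \(u_0=0\), which is exactly the paper's observation that \(0\in\innre B_{C^0(\cl\O;\Rn)}\). Your extra bookkeeping on the sign and the explicit identification \(F^*(q)=|q|(\cl\O)\), together with the direct-method sanity check, go a bit beyond what the paper writes out but do not change the argument.
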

\begin{proof}
  We only need to verify that the assumptions for strong Fenchel duality, see for example~\cite[Theorem~4.4.3]{borwein2004techniques},
  hold.
  Indeed:
  \begin{itemize}
    \item both \(\ind_{B_{C^0(\cl\O;\Rn)}}: C^0(\cl\O;\Rn)\to\R\cup\{+\infty\}\) and
    \(\ell: C^1_0(\cl\O;\Rn)\to\R\) are convex and lower semicontinuous;
    \item \(\nabla \in \mathcal{B}(C^1_0(\cl\O),C^0(\cl\O;\Rn))\), that is, \(\nabla\) is a bounded linear operator
    from \(C^1_0(\cl\O)\) to \(C^0(\cl\O;\Rn)\);
    \item we have the inclusion \(0 \in \innre\{B_{C^0(\cl\O;\Rn)}\}\subset \innre\{B_{C^0(\cl\O;\Rn)}-R(\nabla)\}\),
    where \(R(\nabla) = \{\, \phi \in C^0(\cl\O;\Rn)\mid \phi = \nabla u, u \in C^1_0(\cl\O)\,\}\)
    is the range of \(\nabla\).
  \end{itemize}
  Thus strong Fenchel duality holds, and the infimum in~\eqref{eq:limiting_local_dual_rig} is attained whenever
  \(p^*_{\text{loc}}\) is finite.
  However, inserting \(u=0\) into~\eqref{eq:limiting_local_predual} we verify a simple estimate \(p^*_{\text{loc}}= pd^*_{\text{loc}}\geq 0\), which excludes
  the possibility \(p^*_{\text{loc}}=-\infty\), thereby concluding the proof.
\end{proof}
We note that instead of utilizing the strong Fenchel duality argument, the existence
of solutions to~\eqref{eq:limiting_local_dual_rig} can also be established using
the straightforward application of the direct method of calculus of variations.
See for example~\cite{bouchitte2008michell,bouchitte2001characterization,babadjian2021shape,Olbermann2017,bolbotowski2022optimal} for more details on problems of this type.

We would like to conclude this subsection by considering a simple example showing that one generally cannot expect any additional regularity of solutions to~\eqref{eq:limiting_local_dual_rig}, even with very ``nice'' problem's data \(\O\), \(f\), and \(g\).
Note that while the example utilizes mixed (Dirichlet and Neumann) boundary conditions, we hope it allows the reader to further appreciate the difference in the solution regularity for the problems~\eqref{eq:limiting_local_dual_rig} and its nonlocal analogue~\eqref{eq:prob_nl}.
\begin{example}\label{ex:meas}
  Let \(n=2\), \(\O=(0,1)^2\), \(\Gamma_D = \{0\}\times (0,1)\) and \(\Gamma_N\) is the (interior of) the rest of the boundary.
  We prescribe the volumetric heat source \(f\equiv 0\) and a boundary heat source \(g=1\) on \((0,1)\times \{1\}\subset \Gamma_N\), and \(g=0\) otherwise,
  see~\eqref{volheatsource}.
  Let us define the heat flux \(q = (x_1-1,0)^T \mathcal{H}^{n-1}\mres [(0,1)\times \{1\}]\).
  Then, for each \(\phi\in C^1(\cl\O)\), \(\phi|_{\Gamma_D}\equiv 0\) we have
  \[\begin{aligned}\int_{\O\cup\Gamma_N} \phi(x)\,\mathrm{d}[\Divx q(x)]&=-\int_{\cl\O} \nabla\phi(x)\cdot\,\mathrm{d}q(x)
  =-\int_0^1 (x_1-1) \frac{\partial\phi}{\partial x_1}(x_1,1)\,\mathrm{d}x_1
  \\&=\int_0^1 \phi(x_1,1)\,\mathrm{d}x_1.\end{aligned}\]
  Consequently, the flux conservation \(\Divx q=\mathcal{H}^{n-1}\mres [(0,1)\times \{1\}]=f\mathcal{H}^{n}\mres\O + g\mathcal{H}^{n-1}\mres\Gamma_N\)
  holds, and \(q\) is feasible for the modified version of~\eqref{eq:limiting_local_dual_rig}.
  Its optimality may be established utilizing the following arguments.
  On the one hand we have \(p^*_{\text{loc}}\leq|q|(\cl\O)=1/2\).
  On the other hand, from the convex duality considerations we get the following estimate:
  \[\begin{aligned}
  &\frac{1}{2}\geq p^*_{\text{loc}} \\&=
  \inf_{q\in \cM}\sup_{\phi \in C^1(\cl\O), \phi|_{\Gamma_D}=0}
  \bigg[ |q|(\cl\O) - \int_{\cl\O} \nabla \phi(x)\cdot\,\mathrm{d}q(x) + \int_{\Gamma_N} \phi(x)g(x)\,\mathrm{d}x \bigg]
  \\&\geq
  \sup_{\phi \in C^1(\cl\O), \phi|_{\Gamma_D}=0}
  \inf_{q\in \cM}
  \bigg[ |q|(\cl\O) - \int_{\cl\O} \nabla \phi(x)\cdot\,\mathrm{d}q(x) + \int_{\Gamma_N} \phi(x)g(x)\,\mathrm{d}x \bigg]
  \\&\geq
  \sup_{\phi \in C^1(\cl\O), \phi|_{\Gamma_D}=0, |\nabla \phi(\cdot)|\leq 1}
  \inf_{q\in \cM}
  \bigg[ \int_{\cl\O} \nabla \phi(x)\cdot\,\mathrm{d}q(x) - \int_{\cl\O} \nabla \phi(x)\cdot\,\mathrm{d}q(x) + \int_{\Gamma_N} \phi(x)g(x)\,\mathrm{d}x \bigg]
  \\&=
  \sup_{\phi \in C^1(\cl\O), \phi|_{\Gamma_D}=0, |\nabla \phi(\cdot)|\leq 1}
  \bigg[ \int_{\Gamma_N} \phi(x)g(x)\,\mathrm{d}x \bigg] \geq \frac{1}{2},
  \end{aligned}\]
  where the last inequality can be seen by utilizing for example \(\phi(x_1,x_2)=x_1\).
  Therefore, the optimal solution to this instance of~\eqref{eq:limiting_local_dual_rig} is a measure concentrated along the ``heated'' boundary,
  despite the fact that the problem's data is quite regular;
  indeed, \(f\) is a constant, \(g\) is piecewise-constant, and \(\Omega\) is a convex polygon.
  This example can be further modified so that the data is even smoother; the main point being that even starting from the regular problem instance
  we cannot necessarily escape measure-valued solutions to~\eqref{eq:limiting_local_dual_rig}.
\end{example}

\subsection{Nonlocal optimal design problems}

Nonlocal equations and variational problems have attracted a lot of attention recently.
In this spirit, nonlocal versions of the problems~\eqref{eq:compliance_local_primal} and~\eqref{eq:compliance_local_dual}
have been introduced and studied in~\cite{andres2015type,andres2015nonlocal,andres2017convergence,
evgrafov2019sensitivity,evgrafov2019non,eb2021}.
The measure of nonlocality of the problem will be encapsulated in a parameter \(\delta>0\), which is commonly referred to as the nonlocal
interaction horizon.
The set \(\Od = \cup_{x\in \O} B(x,\delta)\), where
\(B(x,\delta) = \{\, z \in \Rn : |z-x|<\delta\,\}\),
is the set of points, which are at most distance \(\delta\) from \(\O\).
These are the points, which are allowed to ``interact'' with points from \(\O\) in the nonlocal model we consider.
The difference set \(\Gamma_{\delta} = \Od\setminus\O\) is thus a ``nonlocal boundary'' of \(\O\).

The strength and character of nonlocal interactions as a function of the distance between the points is represented by a
radial function \(\Wd: \Rn\to \R_+\) with support in \(B(0,\delta)\),
which will be assumed to satisfy the normalization condition
\(\int_{\Rn} |x|^2\Wd^2(x)\dx = K_{2,n}^{-1},\)
where
\[K_{p,n}=\frac{1}{|\Sn|}\int_{\Sn}|s\cdot e|^p\,\mathrm{d}s,\]
\(\Sn\) is the \(n-1\)-dimensional unit sphere,
and \(e\in\Sn\) is an arbitrary unit vector~\cite{bourgain2001another}.
The most canonical example is obtained by setting \(\Wd(x) = c_{\delta,\alpha}|x|^{-(1+\alpha)}\),
where the  constant \(c_{\delta,\alpha}>0\) is determined from the normalization condition above, and \(\alpha \in [0,n/2)\) is a modelling parameter.  More singular behaviour of \(\Wd\) at zero (that is, larger values of \(\alpha\)) results in higher solution regularity to nonlocal diffusion problems.

Throughout the manuscript we adopt the double-dot accent notation to refer to nonlocal two-point quantities.
For example, the nonlocal two-point gradient for a function \(u:\Rn\to\R\) is given by
\begin{equation}\label{eq:defgrad}
  \grad u(x,x') = [u(x)-u(x')]\Wd(x-x'), \quad (x,x')\in \Rn\times\Rn.
\end{equation}
We will also extend all functions by zero outside of their explicit domain of definition, unless specifically stated otherwise.
Having this convention in mind, we will think of \(\grad\) as a linear, possibly unbounded operator \(\grad : D(\grad)\subset L^2(\O) \to L^2(\Od\times\Od)\).
The following properties of this operator will be often utilized in what follows; see for example~\cite[Proposition~3.1]{eb2021}.
\begin{proposition}\label{prop:grad0}
  The following statements hold.
  \begin{enumerate}
    \item The domain of \(\grad\),
    \(\Udz = D(\grad) = \{\,u \in L^2(\O) \mid \grad u \in L^2(\Od\times\Od), u|_{\Gamma_\delta}\equiv 0 \,\}\)
    is dense in \(L^2(\O)\).
    \item \(\ker\grad=\{0\}\).
    \item Poincar{\'e} type inequality holds: \(\exists \overline{\delta}>0\) and \(C>0\), such that for all \(0<\delta < \overline{\delta}\) and \(u\in \Udz\)  we have the inequality \(\|u\|_{L^2(\O)}\leq C\|\grad u\|_{L^2(\Od\times\Od)}\).
    \item The graph of \(\grad\), \(\graph \grad = \{\,(u,\pnl)\in L^2(\O)\times L^2(\Od\times\Od)
    \mid \pnl=\grad u\,\}\) is closed in \(L^2(\O)\times L^2(\Od\times\Od)\).
    \item \(\Udz\) equipped with the inner product \((u_1,u_2)_{\Udz}=(\grad u_1,\grad u_2)_{L^2(\Od\times\Od)}\)
    is a Hilbert space.
    \item The range of \(\grad\), \(R(\grad) = \{\, \pnl \in L^2(\Od\times\Od) \mid \exists
    u \in L^2(\O): \pnl=\grad u\,\}\) is closed in \(L^2(\Od\times\Od)\).
  \end{enumerate}
\end{proposition}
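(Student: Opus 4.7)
The six items naturally separate into the ``soft'' parts (items 1, 2, 4), the analytic core (item 3), and its functional-analytic consequences (items 5, 6). I would establish them in the order 1, 4, 2, 3, 5, 6.

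For item 1, it suffices to show $C_c^\infty(\O) \subset \Udz$: for $u \in C_c^\infty(\O)$ the elementary bound
\[
|\grad u(x,x')|^2 \leq (\mathrm{Lip}\, u)^2 |x-x'|^2 \Wd^2(x-x')
\]
together with the normalization $\int_{\Rn}|z|^2\Wd^2(z)\,\mathrm{d}z = K_{2,n}^{-1}$ yields $\grad u \in L^2(\Od\times\Od)$, and density follows from the density of $C_c^\infty(\O)$ in $L^2(\O)$. Item 4 is immediate from the pointwise formula~\eqref{eq:defgrad}: if $u_n \to u$ in $L^2(\O)$ and $\grad u_n \to \pnl$ in $L^2(\Od\times\Od)$, then on a common a.e.-convergent subsequence one reads off $\pnl = \grad u$. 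For item 2, the extension-by-zero convention is crucial: $\grad u = 0$ forces $u(x)=u(x')$ whenever $|x-x'|<\delta$ and $\Wd(x-x')>0$, and since $u\equiv 0$ on $\Od\setminus\O$ (a set of positive measure abutting $\O$), a standard chain-of-balls argument exploiting the connectedness of $\O$ gives $u\equiv 0$.

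Item 3 is the real obstacle and the only place where the interplay between the horizon $\delta$ and the domain $\O$ matters quantitatively. I would argue by contradiction and compactness: if no uniform constant works, rescale to produce sequences $\delta_k \in (0,\overline{\delta})$ and $u_k \in \mathbb{U}_{\delta_k,0}$ with $\|u_k\|_{L^2(\O)}=1$ while $\|\ddot{\mathcal{G}}_{\delta_k} u_k\|_{L^2(\O_{\delta_k}\times\O_{\delta_k})} \to 0$. A nonlocal Rellich--Kondrachov type compactness result, in the spirit of Bourgain--Brezis--Mironescu and related to Proposition~\ref{prop:compactness} later invoked in the paper, extracts a subsequence converging strongly in $L^2(\O)$ to some $u$, and the vanishing of the nonlocal gradients pins the limit down (as a constant on balls of the limiting horizon in the case $\delta_k\to\delta_\infty>0$, or as a weak Sobolev function with zero distributional gradient in the case $\delta_k\to 0$). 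The boundary trace inherited from the zero extension on $\Gamma_{\delta_k}$ then forces $u\equiv 0$, contradicting $\|u_k\|_{L^2(\O)}=1$. The main delicate point is handling the two limiting regimes of $\delta$ simultaneously so as to extract a single constant $C$ independent of both $u$ and $\delta$.

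Items 5 and 6 are routine consequences. In item 5, symmetry and positive semidefiniteness of $(\cdot,\cdot)_{\Udz}$ are clear by construction, positive definiteness comes from item 2, and completeness follows because the Poincar\'e inequality of item 3 makes $\|\grad\cdot\|_{L^2(\Od\times\Od)}$ equivalent to the graph norm, under which $\Udz$ is complete thanks to item 4. For item 6, any Cauchy sequence $\grad u_n$ in $L^2(\Od\times\Od)$ induces, via item 3, a Cauchy sequence $u_n$ in $L^2(\O)$ with limit $u$; item 4 then yields $\pnl = \grad u$, whence $\pnl \in R(\grad)$.
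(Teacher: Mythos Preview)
The paper does not actually prove this proposition: it is stated as a recollection of known facts with a citation to \cite[Proposition~3.1]{eb2021}, and no argument is given. So there is nothing to compare your proof against in the paper itself.

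That said, your outline is sound and would reconstruct the result. A few remarks. Your proof of item~2 via a chain-of-balls argument tacitly assumes that $\Wd$ is strictly positive on some punctured ball around the origin (or at least that its support has this connectivity property); this is standard but worth stating, and in any case item~2 is an immediate corollary of item~3, so you could drop the separate argument. For item~3, the contradiction-and-compactness route is correct, but a more direct path is available: the Poincar\'e-type estimate of Ponce~\cite{ponce2004estimate} (which the paper already cites for other purposes) gives the inequality with a constant uniform in $\delta\in(0,\overline{\delta})$ in one stroke, avoiding the case split on $\liminf\delta_k$. Your treatment of items~5 and~6 is exactly right.
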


Using the fact that \(\grad\) is closed and densely defined, we can now consider its negative adjoint, which we will refer to as the  non-local linear divergence operator \(\diver : D(\diver)\subset L^2(\Od\times\Od)\to L^2(\O)\).
Thus, \(\diver\) is explicitly defined to satisfy the following integration by parts formula:
\begin{equation}\label{eq:diver}
  (\diver \qnl, v)_{L^2(\O)} = -(\qnl,\grad v)_{L^2(\Od\times\Od)},
  \qquad \forall \qnl\in D(\diver)\subset L^2(\Od\times\Od), v\in \Udz.
\end{equation}
The domain of \(\diver\) will be denoted by \(\Qd\), that is,
\(\Qd = \{\, \qnl \in L^2(\Od\times\Od) \mid \diver \qnl \in L^2(\O)\,\}\),
which will be equipped with the graph inner product
\((\qnl,\pnl)_{\Qd} = (\qnl,\pnl)_{L^2(\Od\times\Od)} + (\diver \qnl,\diver \pnl)_{L^2(\O)}\).
We will also make use of the following closed affine subspace of \(\Qd\):
\(\Qd(f)=\{\, \qnl \in \Qd \mid \diver \qnl = f \,\}\), where we recall our blanket assumption \(f\in L^2(\O)\).

For future reference we recall the following properties of \(\diver\), see~\cite[Proposition~4.1]{eb2021}.
\begin{proposition}\label{prop:diver0}
  The following statements hold.
  \begin{enumerate}
  \item \(\Qd\) is dense in \(L^2(\Od\times\Od)\).
  \item \(\Qd\) is a Hilbert space.
  \item \(\diver: \Qd \to L^2(\O)\) is bounded and surjective.
  \item There is \(C>0\) and \(\overline{\delta}>0\), such that for each \(v \in L^2(\O)\) and each \(0 < \delta < \overline{\delta}\)
  there is \(\qnl_{v,\delta} \in \Qd\) satisfying the conditions \(\diver\qnl_{v,\delta}=v\) and
  \(\|\qnl_{v,\delta}\|_{\Qd}\leq C\|v\|_{L^2(\O)}\).
  \end{enumerate}
\end{proposition}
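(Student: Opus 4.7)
The plan is to derive all four items as direct consequences of the structure of $\grad$ established in Proposition~\ref{prop:grad0}, together with elementary Hilbert-space duality. Since $\grad$ is closed and densely defined as an operator from $L^2(\O)$ to $L^2(\Od\times\Od)$ (items~1 and~4 of Proposition~\ref{prop:grad0}), its negative adjoint $\diver$ is itself closed and densely defined. The density assertion of item~(1) is exactly this standard fact, while item~(2) is the usual statement that the graph of a closed operator between Hilbert spaces, equipped with the graph inner product, is a Hilbert space.

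For item~(3), boundedness of $\diver$ on $\Qd$ is immediate from the definition of the graph inner product. For surjectivity I would invoke the Hilbert-space closed range theorem: since $R(\grad)$ is closed in $L^2(\Od\times\Od)$ (item~6 of Proposition~\ref{prop:grad0}) and $\ker\grad=\{0\}$ (item~2), the theorem yields $R(\diver) = (\ker\grad)^{\perp} = L^2(\O)$.

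For the stability estimate in item~(4), the idea is to construct $\qnl_{v,\delta}$ as a nonlocal gradient. Concretely, I would set $\qnl_{v,\delta} = -\grad w_{v,\delta}$, where $w_{v,\delta}\in\Udz$ uniquely solves the variational problem
\[
(\grad w_{v,\delta},\grad\varphi)_{L^2(\Od\times\Od)} = -(v,\varphi)_{L^2(\O)},\qquad\forall\,\varphi\in\Udz.
\]
Existence and uniqueness of $w_{v,\delta}$ follow from the Lax--Milgram lemma applied on the Hilbert space $(\Udz,(\cdot,\cdot)_{\Udz})$ provided by item~(5) of Proposition~\ref{prop:grad0}, with coercivity constant equal to one. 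By construction $\qnl_{v,\delta}\in\Qd$ and $\diver\qnl_{v,\delta}=v$. Testing the variational identity with $\varphi = w_{v,\delta}$, applying Cauchy--Schwarz in $L^2(\O)$, and then invoking the Poincar{\'e} inequality of item~(3) gives
\[
\|\grad w_{v,\delta}\|_{L^2(\Od\times\Od)}^{2} \leq \|v\|_{L^2(\O)}\|w_{v,\delta}\|_{L^2(\O)} \leq C\|v\|_{L^2(\O)}\|\grad w_{v,\delta}\|_{L^2(\Od\times\Od)},
\]
from which $\|\qnl_{v,\delta}\|_{L^2(\Od\times\Od)}\leq C\|v\|_{L^2(\O)}$ and hence, combined with $\|\diver\qnl_{v,\delta}\|_{L^2(\O)}=\|v\|_{L^2(\O)}$, the full graph-norm estimate follows.

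The main obstacle — and the only genuinely analytic ingredient — is the $\delta$-uniformity of the constant $C$ in item~(4). The abstract Lax--Milgram and closed range arguments are purely Hilbert-space theoretic and would in general produce $\delta$-dependent constants; uniformity here is purchased entirely by the fact that the Poincar{\'e} constant in item~(3) of Proposition~\ref{prop:grad0} is independent of $\delta\in(0,\overline{\delta})$, which is the deeper input carried over from~\cite{eb2021}.
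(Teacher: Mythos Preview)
The paper does not supply its own proof of this proposition; it simply cites \cite[Proposition~4.1]{eb2021}. Your argument is correct and is the natural one: items~(1)--(3) are straightforward consequences of standard Hilbert-space adjoint and closed-range theory applied to the properties of $\grad$ recorded in Proposition~\ref{prop:grad0}, and item~(4) follows from the Lax--Milgram construction you describe, with the $\delta$-uniform Poincar\'e constant of Proposition~\ref{prop:grad0}(3) supplying the uniformity of $C$.

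One minor sign slip in item~(4): with your choices $(\grad w_{v,\delta},\grad\varphi)=-(v,\varphi)$ and $\qnl_{v,\delta}=-\grad w_{v,\delta}$, the defining relation~\eqref{eq:diver} gives
\[
(\diver\qnl_{v,\delta},\varphi)=-(\qnl_{v,\delta},\grad\varphi)=(\grad w_{v,\delta},\grad\varphi)=-(v,\varphi),
\]
so $\diver\qnl_{v,\delta}=-v$ rather than $v$; flip one of the two signs. It is also worth noting that your $\qnl_{v,\delta}=\pm\grad w_{v,\delta}$ is automatically anti-symmetric, which is exactly the extra property the paper later invokes (just after Proposition~\ref{prop:closable}) when it refers back to \cite[Proposition~4.1]{eb2021}.
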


As we are interested in spatially heterogenous materials, we will assume that for
each pair of interacting points \((x,x') \in \Od\times\Od\),
the strength of their interaction is not only determined by \(\Wd(x-x')\),
but also by the material properties (in our case, conductivity) at these points.
This naturally leads to a concept of nonlocal conductivity denoted by \(\knl:\Od\times\Od \to \R\), where we assume that \(\knl(x,x')=\knl(x',x)\) depends on the averaged local conductivities \(\kappa(x)\) and \(\kappa(x')\).
In particular, in~\cite{andres2015type,andres2015nonlocal,andres2017convergence}
the arithmetic averaging \(2\knl(x,x')=\kappa(x)+\kappa(x')\) was assumed, and
\cite{evgrafov2019sensitivity,evgrafov2019non} considered the case of the geometric
averaging \(\knl^2(x,x')=\kappa(x)\kappa(x')\).
In this work, it will be more convenient to assume the harmonic averaging of conductivities
\begin{equation}\label{eq:harm}\knl(x,x') = 2\kappa(x)\kappa(x')[\kappa(x)+\kappa(x')]^{-1},\end{equation} which is the
same as assuming arithmetic averaging for the resistivities \(\kappa^{-1}\), see~\cite{eb2021}.
This is the assumption we make from now on.

We now turn our attention to the dual pair of variational principles, governing the nonlocal diffusion in spatially heterogeneous materials,
and the associated optimal control in the coefficients problems for conductivities.

The nonlocal Dirichlet principle is stated as wollows:
the steady-state temperature \(u\in\Udz\) is the unique minimizer of the assocated nonlocal quadratic energy functional
\begin{equation}\label{eq:def_ap}
  \begin{aligned}
  \Ip(\knl;v)&=\frac{1}{2}\ap(\knl;v,v) - \ell(v), &\qquad&\text{where}\\
  \ap(\knl;u,v)&=\int_{\Od}\int_{\Od} \knl(x,x')\grad u(x,x')\grad v(x,x')\dx\dx',%
  \end{aligned}
\end{equation}
and \(\ell\) is given by~\eqref{eq:Dirloc}, over all \(v \in \Udz\).
This unique minimizer exists for any \(0<\underline{\kappa}\le\knl\le\overline{\kappa}<+\infty\)
owing to Lax--Milgram lemma and Proposition~\ref{prop:grad0},
and can be found as the solution to the associated optimality conditions
\begin{equation}\label{eq:primalproblem}
  \ap(\knl;u,v)=\ell(v), \quad\forall v\in\Udz.
\end{equation}
Analogously to the local saddle-point problem~\eqref{eq:compliance_local_primal}, we consider the nonlocal problem:
\begin{equation}\label{eq:compliance_nl_primal}
  \max_{\kappa\in\kadmd} \min_{u\in \Udz} \Ip(\knl;u),\\
\end{equation}
where \(\kadmd =\{\, \kappa \in L^\infty(\Od) \mid   \int_{\Od} \kappa(x)\dx \leq \gamma|\Od|,
\kappa(x) \in [\underline{\kappa},\overline{\kappa}], \text{\ for almost all \(x\in\Od\)}\,\}\).

On the dual side, we can proceed as in Subsection~\ref{subsec:local} and consider a nonlocal two-point flux variable \(\qnl(x,x') = -\knl(x,x')\grad u(x,x')\in L^2(\Od\times\Od)\), and attempt to characterize it using a variational principle.
To this end, we can seek the unique pair \((\qnl,u) \in \Qd\times L^2(\O)\) satisfying the following mixed variational problem:
\begin{equation}\label{eq:mixedproblem}
  \begin{aligned}
    \ad(\knl;\qnl,\pnl) + b(\pnl,u) &= 0, &\qquad& \forall \pnl \in \Qd,\\
    b(\qnl,v) &= -\ell(v), &\qquad& \forall v \in L^2(\O),\\
  \end{aligned}
\end{equation}
where
\begin{equation}
  \begin{aligned}
    \ad(\knl;\qnl,\pnl) &= \int_{\Od}\int_{\Od} \knl^{-1}(x,x')\qnl(x,x')\pnl(x,x')\dx\dx',
    &\quad&\text{and}\\
    b(\pnl,v) &= -\int_{\O} \diver\pnl(x)v(x)\dx.
  \end{aligned}
\end{equation}
The system~\eqref{eq:mixedproblem} is well-posed, and in fact the following statement holds, see~\cite[Theorem~4.5]{eb2021}.
\begin{theorem}\label{thm:mixed_exist}
  Problem~\eqref{eq:mixedproblem} admits a unique solution \((\qnl,u)\in \Qd\times L^2(\O)\).
  This solution satisfies the stability estimate
  \begin{equation}
    \|\qnl\|_{\Qd} + \|u\|_{L^2(\O)} \le C \|f\|_{L^2(\O)},
  \end{equation}
  for some \(C>0\) independent from \(\qnl\),  \(u\), \(f\), or \(\delta \in (0,\overline{\delta})\),
  but of course dependent on \(\underline{\kappa}\) and \(\overline{\kappa}\).
\end{theorem}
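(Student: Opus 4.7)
The plan is to verify the Babu\v{s}ka--Brezzi conditions for the mixed system~\eqref{eq:mixedproblem} and then invoke the classical Brezzi theorem, ensuring at each step that the constants involved do not depend on \(\delta \in (0,\overline{\delta})\). Under the pointwise bounds \(\underline{\kappa}\le\knl\le\overline{\kappa}\), the bilinear form \(\ad(\knl;\cdot,\cdot)\) is bounded by \(\underline{\kappa}^{-1}\) on \(L^2(\Od\times\Od)\times L^2(\Od\times\Od)\), and a fortiori on \(\Qd\times\Qd\). The form \(b\) is bounded because \(|b(\pnl,v)|\le \|\diver\pnl\|_{L^2(\O)}\|v\|_{L^2(\O)}\le \|\pnl\|_{\Qd}\|v\|_{L^2(\O)}\) by Cauchy--Schwarz and the definition of the graph norm.

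Next I would check coercivity of \(\ad\) on the kernel \(\ker b = \{\,\qnl\in\Qd : \diver\qnl = 0\,\}\). On this subspace the graph norm of \(\Qd\) collapses to the \(L^2(\Od\times\Od)\) norm, and the bound \(\knl^{-1}\ge \overline{\kappa}^{-1}\) gives directly
\[
\ad(\knl;\qnl,\qnl)\ge \overline{\kappa}^{-1}\|\qnl\|_{L^2(\Od\times\Od)}^2 = \overline{\kappa}^{-1}\|\qnl\|_{\Qd}^2,
\quad \forall \qnl \in \ker b,
\]
with a constant depending only on \(\overline{\kappa}\) and in particular independent of \(\delta\).

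For the inf--sup condition on \(b\) I would use Proposition~\ref{prop:diver0}(4): given \(v\in L^2(\O)\), there exists \(\qnl_{v,\delta}\in\Qd\) with \(\diver\qnl_{v,\delta}=v\) and \(\|\qnl_{v,\delta}\|_{\Qd}\le C\|v\|_{L^2(\O)}\), with \(C\) uniform in \(\delta\in(0,\overline{\delta})\). Then
\[
\sup_{\pnl\in\Qd\setminus\{0\}}\frac{|b(\pnl,v)|}{\|\pnl\|_{\Qd}} \ge \frac{|b(\qnl_{v,\delta},v)|}{\|\qnl_{v,\delta}\|_{\Qd}} = \frac{\|v\|_{L^2(\O)}^2}{\|\qnl_{v,\delta}\|_{\Qd}} \ge C^{-1}\|v\|_{L^2(\O)},
\]
which is precisely the desired \(\delta\)-uniform inf--sup bound.

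With all four Brezzi hypotheses verified with \(\delta\)-independent constants depending only on \(\underline{\kappa}\), \(\overline{\kappa}\), and the constant from Proposition~\ref{prop:diver0}(4), the classical Brezzi saddle-point theorem yields existence and uniqueness of \((\qnl,u)\in \Qd\times L^2(\O)\) solving~\eqref{eq:mixedproblem} together with the stability bound \(\|\qnl\|_{\Qd}+\|u\|_{L^2(\O)}\le C\|\ell\|_{L^2(\O)^*}\le C\|f\|_{L^2(\O)}\). Strictly speaking the second equation of~\eqref{eq:mixedproblem} is tested against \(v\in\Udz\) in the underlying derivation, but since \(b(\qnl,\cdot)\) is continuous on \(L^2(\O)\) and \(\Udz\) is dense in \(L^2(\O)\) by Proposition~\ref{prop:grad0}(1), the identity extends by continuity, as indicated in the footnote. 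I do not anticipate any genuine obstacle here; the only delicate point is the \(\delta\)-uniform right-inverse for \(\diver\), but that work has already been done in Proposition~\ref{prop:diver0}(4) and is what the proof essentially rides on.
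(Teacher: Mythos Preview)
Your proposal is correct and follows exactly the standard Babu\v{s}ka--Brezzi route one would expect; note that the paper does not actually prove this theorem but cites it from~\cite[Theorem~4.5]{eb2021}, where the argument is presumably the same verification of the Brezzi hypotheses you have outlined.
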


There is a direct  correspondence between the unique solutions to~\eqref{eq:primalproblem} and~\eqref{eq:mixedproblem} for heat sources \(f\in L^2(\O)\).
Furthermore, the mixed variational problem~\eqref{eq:mixedproblem} is the system of optimality
conditions for the constrained quadratic optimization problem, which can be interpreted as
the nonlocal Kelvin variational principle~\cite{eb2021}:
\begin{equation}\label{eq:nlkelvin}
    \min_{\qnl\in\Qd(f)} \Id(\knl;\qnl),
\end{equation}
where \(2\Id(\knl;\qnl) = \ad(\knl;\qnl,\qnl)\).
The strong duality holds for nonlocal Dirichlet and Kelvin principles, that is, the optimal value for the problem above, which is uniquely attained owing to Theorem~\ref{thm:mixed_exist}, equals to \(-\min_{u\in \Udz} \Ip(\knl;u)\).
Consequently, the problem~\eqref{eq:compliance_nl_primal} can be equivalently stated in terms of nonlocal fluxes, leading us to the following nonlocal analogue of~\eqref{eq:compliance_local_dual}:
\begin{equation}\label{eq:compliance_nl_mixed}
  \min_{(\kappa,\qnl) \in \kadmd\times\Qd(f)} \Id(\knl;\qnl).
\end{equation}
The problem~\eqref{eq:compliance_nl_mixed} also admits optimal solutions, as may be inferred from the equivalence between~\eqref{eq:compliance_nl_primal} and~\eqref{eq:compliance_nl_mixed}, or as can be verified independently from the convexity considerations.

The following simple observation, which has been made in~\cite{eb2021}, is crucial for the developments that follow.

\begin{remark}\label{rem:asym}
  The optimal flux in~\eqref{eq:nlkelvin}, and consequently also
  in~\eqref{eq:compliance_nl_mixed}, is anti-symmetric,
  that is, \(\qnl(x,x')+\qnl(x',x) = 0\),
  for almost all \((x,x')\in \Od\times\Od\), see~\cite[Remark~4.6]{eb2021}.
  As we will encounter symmetric and anti-symmetric Lebesgue integrable functions often
  in what follows, we introduce the notation for the corresponding closed subspaces of
  \(L^p(\Od\times\Od)\):
  \[\begin{aligned}
    L^p_s(\Od\times\Od) &= \{\, \qnl \in L^p(\Od\times\Od)
  \mid \qnl(x,x')-\qnl(x',x)=0, \text{a.e.\ in \(\Od\times\Od\)}\,\}\quad\text{and}\\
  L^p_a(\Od\times\Od) &= \{\, \qnl \in L^p(\Od\times\Od)
\mid \qnl(x,x')+\qnl(x',x)=0, \text{a.e.\ in \(\Od\times\Od\)}\,\},\\
  \end{aligned}\]
  for \(p\in[1,\infty]\), and later on also for multi-indices \(p\).
  For \emph{single}-indices \(p\in[1,\infty]\) we have the identity
  \(L^p(\Od\times\Od) = L^p_s(\Od\times\Od) \oplus L^p_a(\Od\times\Od)\),
  with two subspaces orthogonal to each other when \(p=2\).
  Indeed,
  \begin{equation}\label{eq:orth}
    \begin{aligned}
    \int_{\Od}\int_{\Od} \qnl_s(x,x')\qnl_a(x,x')\dx'\dx
    &=
    -\int_{\Od}\int_{\Od} \qnl_s(x',x)\qnl_a(x',x)\dx'\dx
    \\&=-\int_{\Od}\int_{\Od} \qnl_s(x',x)\qnl_a(x',x)\dx\dx',
    \\&=-\int_{\Od}\int_{\Od} \qnl_s(x,x')\qnl_a(x,x')\dx'\dx,
    \\&\forall
    \qnl_s\in L^2_s(\Od\times\Od),
    \qnl_a\in L^2_a(\Od\times\Od),\end{aligned}
  \end{equation}
  where we use the symmetry and anti-symmetry of the functions to obtain the first equality, Fubini's theorem to obtain the second, and finally relabel the variables \(x\leftrightarrow x'\) to obtain the third.

  As a consequence of this observation, we can replace \(\Qd\) with
  its closed subspace (which is therefore a Hilbert space with respect to the induced norm) \(\Qds = \Qd\cap L^2_a(\Od\times\Od)\) in the minimization problem~\eqref{eq:nlkelvin} and the associated
  design problem~\eqref{eq:compliance_nl_mixed}, without altering the solution.
  Naturally we will also make use of the notation \(\Qds(f) = \{\, \qnl \in \Qds \mid \diver \qnl = f\,\}\).
\end{remark}

\begin{remark}\label{rem:kappa0}
  Note that one can extend the existence of solutions results for
  problems~\eqref{eq:nlkelvin} and~\eqref{eq:compliance_nl_mixed}
  even when the lower limit on conductivitiy   \(\underline{\kappa}\) defining
  \(\kadmd\) is zero, where to avoid trivial cases we assume \(\gamma>0\).
  Indeed, the function \(\R_+\times \R\ni (\kappa,q) \mapsto \kappa^{-1}q^2\)
  is convex and lower semi-continuous (with the identification \(0^{-1} 0^2 = 0\)), and the
  direct method of calculus of variations still applies.
  For the problem~\eqref{eq:nlkelvin} we get a unique solution provided there is
  a feasible two-point flux \(\qnl\in\Qd\) such that \(\Id(\knl;\qnl)<\infty\).
  In the case of~\eqref{eq:compliance_nl_mixed}, we get existence of solutions
  as \(\kappa \equiv \gamma\) is a feasible positive design, for which
  Theorem~\ref{thm:mixed_exist} provides a unique two-point flux.
  Owing to this fact the objective function is proper with respect to the feasible set
  of~\eqref{eq:compliance_nl_mixed} and the existence of solutions follows in the
  standard fashion without further assumptions.
\end{remark}

Let us conclude this subsection by summarizing the properties of the nonlocal optimal designs problems we have just introduced;
in particular see~\cite[Theorem~6.4]{eb2021}.
\begin{theorem}\label{thm:primal_summary}
  \begin{enumerate}
      \item The problems~\eqref{eq:compliance_nl_primal} and~\eqref{eq:compliance_nl_mixed} admit optimal solutions.
      \item
      \(\lim_{\delta\searrow 0} \min_{(\kappa,\qnl)\in \kadmd\times\Qd(f)}\Id(\knl;\qnl)
      = \min_{(\kappa,q)\in \kadm\times\Q(f)}\Idloc(\kappa;q)\).
      In fact, the sequence of reduced functions \(\kappa\mapsto \min_{\qnl\in \Qd(f)}\Id(\knl;\qnl)\) \(\Gamma\)-converges,
      as \(\delta\searrow 0\), towards its local limit \(\kappa\mapsto \min_{q\in \Q(f)}\Idloc(\kappa;q)\).
 \end{enumerate}
\end{theorem}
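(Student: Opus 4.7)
For the existence in part~(1), the plan is to treat the mixed formulation~\eqref{eq:compliance_nl_mixed} by the direct method and then recover~\eqref{eq:compliance_nl_primal} from the equivalence established after Theorem~\ref{thm:mixed_exist}. I would take a minimizing sequence $(\kappa_n,\qnl_n)\in\kadmd\times\Qd(f)$, extract $\kappa_n \wsto \kappa\in\kadmd$ by Banach--Alaoglu (since $\kadmd$ is bounded and weakly$^*$ closed in $L^\infty(\Od)$), and use $\knl_n^{-1}\ge \overline{\kappa}^{-1}$ together with boundedness of $\Id(\knl_n;\qnl_n)$ to conclude $\qnl_n$ is bounded in $L^2(\Od\times\Od)$; because $\diver\qnl_n=f$ is fixed and $\diver$ is weakly closed, this delivers $\qnl_n\wto\qnl$ with $\qnl\in\Qd(f)$. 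The remaining point is joint lower semicontinuity of $(\kappa,\qnl)\mapsto \int_{\Od}\int_{\Od}\knl^{-1}|\qnl|^2$ along this mixed topology. Here the harmonic averaging~\eqref{eq:harm} is fortunate: $\knl^{-1}(x,x')=\tfrac{1}{2}[\kappa^{-1}(x)+\kappa^{-1}(x')]$ is affine in the resistivities, so the integrand is a convex normal integrand in $(\kappa(x),\kappa(x'),\qnl)$ and Ioffe's theorem applies.

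For part~(2), the natural plan is to establish $\Gamma$-convergence of the reduced functionals $J_\delta(\kappa):=\min_{\qnl\in\Qd(f)}\Id(\knl;\qnl)$ to $J_0(\kappa):=\min_{q\in\Q(f)}\Idloc(\kappa;q)$ along the weak$^*$ topology on $L^\infty$, after extending admissible $\kappa\in\kadm$ to $\kadmd$ by a fixed admissible constant on $\Gamma_\delta$. Convergence of the minimum values then follows by equi-coercivity of $\{J_\delta+\ind_{\kadmd}\}$, again immediate from $\knl^{-1}\ge\overline{\kappa}^{-1}$ and Banach--Alaoglu. For the recovery (limsup) step I would take $\kappa_\delta\equiv\kappa$ and, for an optimal local flux $q\in\Q(f)$, construct a two-point antisymmetric approximation of the form $\qnl_\delta(x,x')\propto (x-x')\cdot q\bigl(\tfrac{x+x'}{2}\bigr)\Wd(x-x')$; the normalization $\int_{\Rn}|y|^2\Wd^2(y)\,\mathrm{d}y = K_{2,n}^{-1}$ is designed precisely so that the nonlocal quadratic energy of such a field converges to $\Idloc(\kappa;q)$, and a small correction using the $\delta$-uniform right-inverse of $\diver$ from Proposition~\ref{prop:diver0}~(4) restores exact feasibility $\qnl_\delta\in\Qd(f)$ without disturbing the limit.

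The main obstacle is the liminf step. Given $\kappa_\delta\wsto\kappa$ and optimal antisymmetric $\qnl_\delta$ with uniformly bounded nonlocal energy, I need to produce a limit flux $q\in\Q(f)$ with $\Idloc(\kappa;q)\le\liminf_\delta\Id(\knl_\delta;\qnl_\delta)$. Antisymmetry plus the energy bound allow the Bourgain--Brezis--Mironescu mechanism to ``collapse'' the two-point field $\qnl_\delta$ to a genuine $\Rn$-valued one-point flux $q$: testing the first-moment functional $\iint \phi(x)(x-x')\qnl_\delta(x,x')\Wd(x-x')\,\mathrm{d}x\,\mathrm{d}x'$ against $\phi\in C^1_0(\cl\O;\Rn)$ identifies $q$ in the limit, and passing to the limit in~\eqref{eq:diver} against smooth test functions converts $\diver\qnl_\delta=f$ into $\Div q=f$. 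The truly delicate part is the energy inequality itself, since $\knl_\delta^{-1}$ evaluates $\kappa_\delta^{-1}$ at two $\delta$-close points and $\kappa_\delta$ converges only weakly$^*$; affineness of $\knl_\delta^{-1}$ in the resistivities $\kappa_\delta^{-1}(x),\kappa_\delta^{-1}(x')$ together with convexity of $(\kappa^{-1},q)\mapsto\kappa^{-1}|q|^2$ is what one ultimately leverages, combined with a BBM-type localization applied not to a single scalar density but to the full integrand against the kernel $\Wd^2$. This is exactly the argument carried out in~\cite[Theorem~6.4]{eb2021}, which closes the liminf and hence the $\Gamma$-convergence.
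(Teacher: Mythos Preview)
The paper does not actually prove this theorem: it is stated as a summary of known results, with the $\Gamma$-convergence claim attributed directly to \cite[Theorem~6.4]{eb2021} and the existence claims referred back to the discussion surrounding Theorem~\ref{thm:mixed_exist} and Remark~\ref{rem:kappa0}. Your proposal sketches a plausible route to these facts and, for the hard part (the liminf inequality in part~(2)), ends by invoking the very same reference; in that sense you are aligned with the paper.

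One point worth flagging in your part~(1) sketch: you pass to a weak$^*$ limit $\kappa_n\wsto\kappa$ and then invoke Ioffe's theorem for the integrand written as a convex function of the \emph{resistivities} $\kappa^{-1}(x),\kappa^{-1}(x')$. But weak$^*$ convergence of $\kappa_n$ does not imply weak$^*$ convergence of $\kappa_n^{-1}$, so affineness in the resistivities is not by itself enough. What does work is that $(a,b,q)\mapsto \tfrac12(a^{-1}+b^{-1})q^2$ is jointly convex on $(0,\infty)^2\times\R$, hence the full integrand is a convex normal integrand in $(\kappa(x),\kappa(x'),\qnl(x,x'))$, and weak--weak lower semicontinuity follows. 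This is the convexity the paper alludes to in Remark~\ref{rem:kappa0}; just make sure your Ioffe invocation is phrased in terms of $\kappa$ rather than $\kappa^{-1}$.
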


\subsection{Vanishing material fraction limit for the nonlocal problem}\label{subsec:nlvan}

Now we would like to derive a formal low volume ratio limit of~\eqref{eq:compliance_nl_mixed},
that is, a nonlocal version of~\eqref{eq:limiting_local_dual_rig}, where we proceed along the lines of
Subsection~\ref{subsec:lvan};
in particular, we assume \(\underline{\kappa}=0\)
and \(\gamma\searrow 0\).
Analogous to~\eqref{eq:kadm_gamma_loc} and~\eqref{eq:idloc}
we define
\begin{equation}\label{eq:gamma_mixed}
  \begin{aligned}
    \kadmd^\gamma &= \bigg\{\, \kappa_\gamma \in L^1(\Od) : 0 \le \kappa_\gamma(x) \le \gamma^{-1}\overline{\kappa},
    \text{\ a.e. in \(\Od\)},
    \int_{\Od} \kappa_\gamma(x)\dx \leq |\Od|\,\bigg\}, &\quad&\text{and}\\
    \id^\gamma(\qnl) &= \inf_{\kappa_\gamma \in \kadmd^\gamma} \Id(\knl_\gamma,\qnl)
    = \inf_{\kappa_\gamma \in \kadmd^\gamma}
    \frac{1}{2}\int_{\Od} \kappa_\gamma^{-1}(x)
    \int_{\Od}|\qnl(x,x')|^2\dx'\dx,
  \end{aligned}
\end{equation}
where we utilize the definition of the nonlocal conductivity \(\knl_\gamma^{-1}(x,x') = [\kappa_\gamma^{-1}(x)+\kappa_\gamma^{-1}(x')]/2\) via harmonic averaging as well as the inclusion \(\qnl \in \Qds\) to arrive at the last equality.
With these definitions~\eqref{eq:compliance_nl_mixed} is equivalent to
\begin{equation}\label{eq:igamma_nl}
  \hat{p}^*_{\delta,\gamma}=\inf_{\qnl \in \Qds(f)} \id^\gamma(\qnl).
\end{equation}
We now take a formal limit \(\gamma\searrow 0\) in~\eqref{eq:gamma_mixed}.
We are going to need Lebesgue spaces  \(L^p(\Od\times\Od)\) with mixed exponents \(p\),
see~\cite{benedek1961space} or~\cite[Section 2.48]{adams2003sobolev}.
In particular, we denote by \(L^{p,q}(\Od\times\Od)\), \(1\leq p,q \leq +\infty\) the space of
(equivalence classes of) Lebesgue measurable functions \(\qnl:\Od\times\Od \to \R\)
such that
\begin{equation}
  \|\qnl\|_{L^{p,q}(\Od\times\Od)} =
  \bigg\{\int_{\Od}\bigg[\int_{\Od} |\qnl(x,x')|^q\dx'\bigg]^{p/q}\dx\bigg\}^{1/p} < +\infty,
\end{equation}
with the usual replacement of integration with \(\esssup\) whenever \(p\) or \(q\) is infinite.
The most important for us is going to be the space \(L^{1,2}(\Od\times\Od)\), which is a separable,
non-reflexive Banach space, whose dual is isometrically isomorphic to \(L^{\infty,2}(\Od\times\Od)\)
with the usual primal-dual pairing through integration.
We would also like to remind the reader about our notation \(L_a^{p,q}(\Od\times\Od)\)
for anti-symmetric  functions in \(L^{p,q}(\Od\times\Od)\).

Similarly to Proposition~\ref{prop:id0} we have the following result.
\begin{proposition}\label{prop:id0nl}
  For each \(\qnl \in L^{1,2}_a(\Od\times\Od)\supset \Qds\) the formal limiting optimization
  problem
  \begin{equation}
    \min_{\kappa_0 \in \kadmd^0}\Id(\knl_0;q),
  \end{equation}
  where \(2\knl^{-1}_0(x,x') = \kappa^{-1}_0(x)+\kappa^{-1}_0(x')\),
  admits an optimal solution defined pointwise by
  \begin{equation}
    \begin{aligned}
      \hat{\kappa}_0(x) = \hat{c}_\delta
      \bigg[\int_{\Od}|\qnl(x,x')|^2\dx'\bigg]^{1/2},\quad\text{where\ }
      \hat{c}_\delta = |\Od|\|\qnl\|_{L^{1,2}(\Od\times\Od)}^{-1}.
    \end{aligned}
  \end{equation}
\end{proposition}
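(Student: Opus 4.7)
The plan is to mimic the strategy of Proposition~\ref{prop:id0} after first reducing the double integral to a single one via the antisymmetry of \(\qnl\). With the harmonic average \(2\knl_0^{-1}(x,x') = \kappa_0^{-1}(x) + \kappa_0^{-1}(x')\) and the fact that \(|\qnl(x,x')|^2\) is symmetric in its two arguments (an immediate consequence of \(\qnl \in L^{1,2}_a(\Od\times\Od)\)), splitting the integrand into its two symmetric halves, swapping variables in one of them, and applying Fubini yield
\[
\Id(\knl_0;\qnl) = \tfrac{1}{2}\int_{\Od}\kappa_0^{-1}(x)g(x)\dx, \qquad g(x) := \int_{\Od}|\qnl(x,x')|^2\,\mathrm{d}x'.
\]
By definition \(g^{1/2}\in L^1(\Od)\) with \(\|g^{1/2}\|_{L^1(\Od)} = \|\qnl\|_{L^{1,2}(\Od\times\Od)}\), so the problem now has exactly the form appearing in Proposition~\ref{prop:id0}, but with the pointwise Euclidean norm \(|q(x)|\) replaced by the slice norm \(g^{1/2}(x)\).

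From here one can transplant the perturbation/Fenchel biconjugate computation from Proposition~\ref{prop:id0} essentially verbatim, substituting \(g^{1/2}\) for \(|q|\) throughout: the pointwise separable inner minimization is again attained at \(\hat{\kappa}_{0,\xi^*}(x) = g^{1/2}(x)[2\xi^*]^{-1/2}\) for \(\xi^*>0\), and optimizing the resulting concave function of \(\xi^*\) pins down \(\xi^* = \|\qnl\|_{L^{1,2}}^2/(2|\Od|^2)\) with optimal value \(\|\qnl\|_{L^{1,2}}^2/(2|\Od|)\). The candidate \(\hat{\kappa}_0 = \hat{c}_\delta g^{1/2}\) with \(\hat{c}_\delta = |\Od|/\|\qnl\|_{L^{1,2}}\) satisfies \(\int_{\Od}\hat{\kappa}_0 = |\Od|\) (hence is admissible in \(\kadmd^0\)) and realizes this value upon direct substitution, confirming optimality. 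Equivalently -- and more economically -- one may apply Cauchy--Schwarz directly: writing \(\|\qnl\|_{L^{1,2}} = \int_{\Od}(\kappa_0^{-1/2}g^{1/2})(\kappa_0^{1/2})\dx\) gives \(\|\qnl\|_{L^{1,2}}^2 \leq (\int_{\Od}\kappa_0^{-1}g)(\int_{\Od}\kappa_0)\), and combining this with the constraint \(\int_{\Od}\kappa_0 \leq |\Od|\) yields the same lower bound, with the equality case \(\kappa_0 \propto g^{1/2}\) forcing the explicit formula for \(\hat{\kappa}_0\).

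I do not anticipate a serious obstacle; the only minor subtlety is the possibility that \(g\) vanishes on a set of positive measure, which forces \(\hat{\kappa}_0\) to vanish there as well. The convention \(0^{-1}\cdot 0 = 0\) already employed in Remark~\ref{rem:kappa0} renders the integrand harmless on such sets, and these points contribute nothing to any of the integrals entering either the objective or the admissibility constraint. Beyond this piece of bookkeeping, the proof is entirely parallel to that of Proposition~\ref{prop:id0}.
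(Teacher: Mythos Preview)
Your proposal is correct and follows essentially the same approach as the paper: the paper's proof simply invokes the last equality in~\eqref{eq:gamma_mixed} (which is precisely your reduction to \(\tfrac{1}{2}\int_{\Od}\kappa_0^{-1}g\) via antisymmetry and the harmonic-average formula) and then appeals to Proposition~\ref{prop:id0} with the identification \(|q(x)|\leftrightarrow g^{1/2}(x)\). Your additional Cauchy--Schwarz shortcut is a nice self-contained alternative to re-running the Fenchel biconjugate computation, but it is not needed to match the paper's argument.
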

\begin{proof}
  In view of the last equality in~\eqref{eq:gamma_mixed} it is sufficient to appeal
  to Proposition~\ref{prop:id0} and identify \(|q(x)| = [\int_{\Od}|\qnl(x,x')|^2\dx']^{1/2}\).\footnote{%
  Strictly speaking, we also need to identify \(\O=\Od\), which is a great abuse of
  the previously introduced notation.}
\end{proof}
As a direct consequence of Proposition~\ref{prop:id0nl} we have the equality
\(\id^0(\qnl) = (2|\Od|)^{-1}\|\qnl\|^2_{L^{1,2}(\Od\times\Od)}\).
To complete the derivation of the formal limiting problem we need to extend
the non-local divergence operator to a broader class of functions
\(L^{1,2}(\Od\times\Od)\supset L^2(\Od\times\Od)\).
In what follows we will use the notation \(W^{1,\infty}_0(\O)\) to denote the
functions in \(W^{1,\infty}(\O)\), whose extension by \(0\) in \(\Rn\setminus\O\)
is in \(W^{1,\infty}(\Rn)\).

\begin{proposition}\label{prop:grad}
  For each \(u \in W^{1,\infty}_0(\O)\) and a measurable
  set \(E \subseteq \Od\) we have the estimate
  \begin{equation}\label{eq:grad}
    \esssup_{x\in\Od}\bigg[\int_{E}|\grad u(x,x')|^2\dx'\bigg]^{1/2}
    \leq
     \|u\|_{W^{1,\infty}(\O)}
    \sup_{F:F\subseteq \Rn, |F|\leq |E|}
    \bigg[\int_{F} \Wd^2(z)|z|^2\,{\mathrm d}z\bigg]^{1/2}.
  \end{equation}
  In particular, \(\grad u \in L^{\infty,2}_a(\Od\times\Od)\),
  and \(\|\grad u\|_{L^{\infty,2}(\Od\times\Od)}\leq K_{2,n}^{-1/2}\|u\|_{W^{1,\infty}(\O)}\).
\end{proposition}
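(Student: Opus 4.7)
The plan is to exploit the fact that any $u \in W^{1,\infty}_0(\O)$, when extended by zero to $\Rn$, is a bounded Lipschitz function on $\Rn$ with Lipschitz constant at most $\|u\|_{W^{1,\infty}(\O)}$. This is guaranteed by the definition of $W^{1,\infty}_0(\O)$ given just before the statement. From this we immediately obtain the pointwise estimate
\[
|\grad u(x,x')| = |u(x)-u(x')|\Wd(x-x') \leq \|u\|_{W^{1,\infty}(\O)}\,|x-x'|\,\Wd(x-x'),
\]
for almost every $(x,x')\in\Od\times\Od$.

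Next, I would fix $x\in\Od$ and perform the change of variables $z = x'-x$ in the inner integral over $x'\in E$. This transforms it into an integral over the translate $E_x := E - x\subseteq\Rn$, which is a measurable set with $|E_x|=|E|$. Squaring the pointwise bound and integrating then yields
\[
\int_{E}|\grad u(x,x')|^2\dx' \leq \|u\|_{W^{1,\infty}(\O)}^2 \int_{E_x}|z|^2\Wd^2(z)\,\mathrm{d}z.
\]
Taking the essential supremum in $x\in\Od$ on the left produces on the right a supremum over a family of translates of $E$, each of Lebesgue measure equal to $|E|$. This family is contained in the larger collection of all measurable $F\subseteq\Rn$ with $|F|\leq |E|$, which gives exactly~\eqref{eq:grad}.

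For the final claim, I would specialize to $E=\Od$, so any admissible $F\subseteq\Rn$ in the supremum on the right-hand side of~\eqref{eq:grad} can simply be enlarged to all of $\Rn$, producing $\int_{\Rn}\Wd^2(z)|z|^2\,\mathrm{d}z = K_{2,n}^{-1}$ by the normalization assumption on $\Wd$. The antisymmetry $\grad u(x,x')+\grad u(x',x)=0$ is a direct consequence of the radial symmetry of $\Wd$, since $\Wd(x-x')=\Wd(x'-x)$. Combined with the uniform bound just obtained, this shows $\grad u\in L^{\infty,2}_a(\Od\times\Od)$ with the advertised norm estimate. There is no real obstacle in this proposition; the only minor subtlety is keeping track of the fact that once we allow the supremum to range over all measurable translates of $E$ in $\Rn$ we lose nothing compared to the stated supremum over measurable $F\subseteq\Rn$ with $|F|\leq|E|$, which is important because in later applications $E$ will often be a proper subset of $\Od$ of small measure and one wants the localized estimate, not only the global one.
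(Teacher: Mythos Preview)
Your proof is correct and follows essentially the same approach as the paper: extend $u$ by zero to obtain a Lipschitz function on $\Rn$, derive the pointwise bound $|\grad u(x,x')|\leq \|u\|_{W^{1,\infty}(\O)}|x-x'|\Wd(x-x')$, and integrate. The paper is terser (it says the claims ``follow immediately'' from the pointwise inequality and the normalization), whereas you spell out the change of variables $z=x'-x$ and the passage from translates of $E$ to the supremum over all measurable $F$ with $|F|\leq|E|$; you also make the antisymmetry explicit, which the paper leaves implicit.
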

\begin{proof}
  Let us chose a Lipschitz continuous representative of \(u\in W^{1,\infty}_0(\O)\) and
  extend it by zero outside of \(\O\).
  Then, \(\|u\|_{W^{1,\infty}(\Rn)} = \|u\|_{W^{1,\infty}(\O)}\), and
  \(|u(x)-u(x')|\leq |x-x'|\|\nabla u\|_{L^\infty(\Rn;\Rn)}\), for all \((x,x')\in \Od\times\Od\),
  see for example~\cite[Proposition~9.3]{brezis2010functional}.
  Then the claims follow immediately from the inequality
  \begin{equation*}
    |\grad u(x,x')| \leq  \|\nabla u\|_{L^{\infty}(\Rn;\Rn)} \Wd(x-x')|x-x'|, \qquad\forall (x,x')\in \Od\times\Od,
  \end{equation*}
  and the normalization condition \(\int_{\Rn}\Wd^2(z)|z|^2\,\mathrm{d}z = K_{2,n}^{-1}\).
\end{proof}

\begin{proposition}\label{prop:closable}
  Unbounded linear operator \(\diver: \Qd\subset L^{1,2}(\Od\times\Od)\to L^2(\O)\) is closable.
  Its closure \(\diverx: D(\diverx)\subset L^{1,2}(\Od\times\Od)\to L^2(\O)\) satisfies the
  equality
  \begin{equation}\label{eq:divx}
    (\diverx \qnl,v)_{L^2(\O)}=-\int_{\Od}\int_{\Od} \qnl(x,x')\grad v(x,x')\dx'\dx,
    \qquad \forall v\in W^{1,\infty}_0(\O).
  \end{equation}
\end{proposition}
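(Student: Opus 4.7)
The plan is to establish closability by verifying the standard criterion: if $\qnl_n \in \Qd$ satisfies $\qnl_n \to 0$ in $L^{1,2}(\Od\times\Od)$ and $\diver \qnl_n \to g$ in $L^2(\O)$, then $g = 0$. The formula for $\diverx$ will then follow by passing to the limit in the integration-by-parts identity~\eqref{eq:diver} available on the pre-closure.

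First I would observe that $W^{1,\infty}_0(\O)\subset \Udz$: indeed, if $v\in W^{1,\infty}_0(\O)$ then $v \in L^2(\O)$ since $\O$ is bounded, and by Proposition~\ref{prop:grad} we have $\grad v \in L^{\infty,2}_a(\Od\times\Od)\subset L^2(\Od\times\Od)$ since $\Od$ is bounded as well. Consequently, for each such $v$ and every $\qnl_n\in\Qd$ the pre-existing integration-by-parts identity~\eqref{eq:diver} yields
\begin{equation*}
  (\diver \qnl_n, v)_{L^2(\O)} = -\int_{\Od}\int_{\Od}\qnl_n(x,x')\grad v(x,x')\dx'\dx.
\end{equation*}

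For the closability, I would then exploit the $L^{1,2}$--$L^{\infty,2}$ duality: the right-hand side above is bounded by $\|\qnl_n\|_{L^{1,2}(\Od\times\Od)}\|\grad v\|_{L^{\infty,2}(\Od\times\Od)}$, which tends to $0$ under the assumed convergence $\qnl_n\to 0$ in $L^{1,2}(\Od\times\Od)$, while the left-hand side tends to $(g,v)_{L^2(\O)}$. Therefore $(g,v)_{L^2(\O)}=0$ for every $v\in W^{1,\infty}_0(\O)$; since $C^\infty_c(\O)\subset W^{1,\infty}_0(\O)$ is dense in $L^2(\O)$, we conclude $g=0$, establishing closability and hence the existence of the closure $\diverx$.

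For the second claim, given $\qnl\in D(\diverx)$ let $\qnl_n\in\Qd$ be an approximating sequence with $\qnl_n\to \qnl$ in $L^{1,2}(\Od\times\Od)$ and $\diver \qnl_n \to \diverx \qnl$ in $L^2(\O)$. Applying~\eqref{eq:diver} with an arbitrary $v\in W^{1,\infty}_0(\O)\subset\Udz$ and passing to the limit in both sides — the left side by the $L^2(\O)$ convergence of $\diver\qnl_n$ and the right side again by the $L^{1,2}$--$L^{\infty,2}$ duality pairing, which is continuous under the $L^{1,2}$ convergence of $\qnl_n$ against the fixed element $\grad v \in L^{\infty,2}(\Od\times\Od)$ — yields precisely~\eqref{eq:divx}. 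The only mildly delicate point in the whole argument is ensuring that one chooses a test space that is simultaneously contained in $\Udz$ (so that~\eqref{eq:diver} applies), dense in $L^2(\O)$ (so that closability can be concluded), and whose nonlocal gradient lies in the predual $L^{\infty,2}(\Od\times\Od)$ of $L^{1,2}(\Od\times\Od)$; Proposition~\ref{prop:grad} is exactly what makes $W^{1,\infty}_0(\O)$ satisfy all three requirements.
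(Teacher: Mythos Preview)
Your proof is correct and follows essentially the same approach as the paper: test against $v\in W^{1,\infty}_0(\O)\subset\Udz$, use the integration-by-parts identity~\eqref{eq:diver}, bound the right-hand side by the $L^{1,2}$--$L^{\infty,2}$ H{\"o}lder inequality together with Proposition~\ref{prop:grad}, and conclude via density in $L^2(\O)$. You are actually slightly more thorough than the paper, which only writes out the closability argument and leaves the verification of~\eqref{eq:divx} implicit; your final paragraph makes that limit passage explicit.
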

\begin{proof}
  Suppose that a sequence \(\qnl_k \in \Qd=D(\diver)\subset L^2(\Od\times\Od)\),
  \(k=1,2,\dots\) converges to
  zero in \(L^{1,2}(\Od\times\Od)\), and that \(u_k=\diver\qnl_k\) converges
  to \(\hat{u}\in L^2(\O)\).
  Then, for each \(v\in W^{1,\infty}_0(\O)\subset\Udz\) we have
  \[\begin{aligned}
  |(\hat{u},v)_{L^2(\O)}| &= \lim_{k\to\infty} |(u_k,v)_{L^2(\O)}|
  =\lim_{k\to\infty} \bigg|\int_{\Od}\int_{\Od}\qnl_k(x,x')\grad v(x,x')\dx'\dx\bigg|
  \\&
  \leq \|\grad v\|_{L^{\infty,2}(\Od\times\Od)}\lim_{k\to\infty}\|\qnl_k\|_{L^{1,2}(\Od\times\Od)}
  =0,
  \end{aligned}\]
  in view of Proposition~\ref{prop:grad} and H{\"o}lder's inequality.
  Since \(W^{1,\infty}_0(\O)\) is dense in \(L^2(\O)\) it follows that \(\hat{u}=0\).
\end{proof}

Before we proceed further let us introduce the following notation.
Let \(\Qdx = \{\, \qnl \in L^{1,2}(\Od\times\Od) \mid \diverx \qnl \in L^2(\O)\,\}\), which in view of Proposition~\ref{prop:closable}
is a Banach space when equipped with the graph norm
\(\|\qnl\|_{\Qdx}^2 = \|\qnl\|^2_{L^{1,2}(\Od\times\Od)} + \|\diverx \qnl\|^2_{L^2(\O)}\).
Let further \(\Qdx^a = \Qdx \cap L^{1,2}_a(\Od\times\Od)\) be its closed subspace,
which is therefore also a Banach space with the induced norm.
Finally, for \(f\in L^2(\O)\) we define the closed affine subspaces
\(\Qdx(f) = \{\, \qnl \in \Qdx \mid \diverx \qnl = f \,\}\)
and
\(\Qdx^a(f) = \{\, \qnl \in \Qdx^a \mid \diverx \qnl = f \,\}\).
Note that \(\Qdx(f)\), and in fact also \(\Qdx^a(f)\), are nonempty owing to Proposition~\ref{prop:diver0}.4 and H{\"o}lder's inequality;
indeed the flux \(\qnl_{v,\delta}\) of Proposition~\ref{prop:diver0}.4
is in fact in \(L^{2}_a(\Od\times\Od)\subset L^{1,2}_a(\Od\times\Od)\), see~\cite[Proposition~4.1]{eb2021}.

After this preliminary work, we can define the following formal limiting problem for~\eqref{eq:compliance_nl_mixed}:
\begin{equation}\label{eq:prob_nl}
  p^*_{\delta,a} = \inf_{\qnl \in \Qdx^a(f)} \|\qnl\|_{L^{1,2}(\Od\times\Od)} =
  \inf_{\qnl \in \Qdx^a} [\|\qnl\|_{L^{1,2}(\Od\times\Od)} + \ind_{\{f\}}(\diverx \qnl)].
\end{equation}
Note that this problem should be viewed as a nonlocal analogue of~\eqref{eq:limiting_local_dual_rig}.

\section{Analysis of the limiting local problem}\label{sec:analysis_loc}
In this section we add some rigour to the formal derivation of~\eqref{eq:limiting_local_dual_rig} from~\eqref{eq:compl_flux_loc}.
We reiterate that the problem~\eqref{eq:limiting_local_dual_rig} and the aforementioned limiting process are very well studied in the existing literature on the subject; we refer the interested reader to for example~\cite{bouchitte2008michell,bouchitte2001characterization,babadjian2021shape,Olbermann2017,bolbotowski2022optimal}.
Nevertheless, the material in this section can be treated as a preview of the forthcoming derivations for the non-local problem~\eqref{eq:prob_nl}, and is therefore included to keep this work self-contained.
To this end we introduce the \(L^2(\O)\)-Tikhonov regularization of~\eqref{eq:limiting_local_dual}.
Let \(\beta>0\) be a regularization parameter and \(j_\beta(q) = \|q\|_{L^1(\O;\Rn)} + \frac{\beta}{2}\|q\|^2_{L^2(\O;\Rn)}\).
We consider the pair of primal
\begin{align}\label{eq:Tikhonov_local}
  p^*_{\text{loc},\beta} &= \inf_{q\in \Q} [ j_\beta(q) + \ind_{\{f\}}(\Div q)],
  \intertext{and dual}
    \label{eq:Tikhonov_dual_local}
  -d^*_{\text{loc},\beta} &= \inf_{u\in L^2(\O)} [ j_\beta^*(\Div^* u) -\ell(u)],
\end{align}
problems,  where \(\Div^* \in \mathcal{B}(L^2(\O), \Q')\) is the negative adjoint of \(\Div\in \mathcal{B}(\Q,L^2(\O))\),
and \(j_\beta^*: \Q'\to\R\cup\{+\infty\}\) is the convex conjugate of \(j_\beta\) given by
\begin{equation}\label{eq:jstar_loc}\begin{aligned}
  j_\beta^*(p) = \sup_{q\in\Q}[\langle p,q\rangle - j_\beta(q)].
\end{aligned}\end{equation}

Our strategy now is as follows. The  family of problems~\eqref{eq:Tikhonov_local} provides us with the approximation of \(p^*_{\text{loc}}\) from above.
We will see further, that they also estimate \(\hat{p}^*_{\text{loc},\gamma}\) from above.
To obtain lower estimates, we would like to use~\eqref{eq:Tikhonov_dual_local} to approximate \(pd^*_{\text{loc}}\) from below. However, in order to do this, we need to extend the feasible set of~\eqref{eq:limiting_local_predual} to include potential limit points of~\eqref{eq:Tikhonov_dual_local}.
Throughout this section \(\overline{\beta}\) is a fixed arbitrary positive constant.
\begin{proposition}\label{prop:pdeq}
  Consider the following relaxation of~\eqref{eq:limiting_local_predual}:
  \begin{equation}\label{eq:limiting_local_predual_relax}
    -\overline{pd}^*_{\text{loc}} = \inf_{u \in W^{1,\infty}_0(\O)} [\ind_{B_{L^\infty(\O;\Rn)}}(\nabla u)-\ell(u)],
  \end{equation}
  in which we replace the space of continuously differentiable functions \(C^1_0(\cl\O)\) with
  a larger Sobolev space \(W^{1,\infty}_0(\O)\).
  Then
  \(\overline{pd}^*_{\text{loc}}=pd^*_{\text{loc}}\).
\end{proposition}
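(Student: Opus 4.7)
The plan is to establish $\overline{pd}^*_{\text{loc}} = pd^*_{\text{loc}}$ via two opposite inequalities. The direction $\overline{pd}^*_{\text{loc}} \geq pd^*_{\text{loc}}$, equivalently $-\overline{pd}^*_{\text{loc}} \leq -pd^*_{\text{loc}}$, follows at once from set inclusion. For any $u \in C^1_0(\cl\O)$ the extension by zero to $\R^n$ belongs to $W^{1,\infty}(\R^n)$: its distributional gradient is $\nabla u\,\chi_\O$, which is bounded, with no boundary contribution since $u$ vanishes on $\partial\O$. Hence $u \in W^{1,\infty}_0(\O)$, and because $\nabla u$ is continuous, $\|\nabla u\|_{C^0(\cl\O;\Rn)} = \|\nabla u\|_{L^\infty(\O;\Rn)}$, so the integrands in~\eqref{eq:limiting_local_predual} and~\eqref{eq:limiting_local_predual_relax} agree on $C^1_0(\cl\O)$, and the infimum over the larger admissible set cannot exceed the one over the smaller.

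For the non-trivial direction $\overline{pd}^*_{\text{loc}} \leq pd^*_{\text{loc}}$, the strategy is approximation: given any admissible $u \in W^{1,\infty}_0(\O)$ with $\|\nabla u\|_{L^\infty(\O;\Rn)} \leq 1$, I will construct a sequence $u_\eta \in C^\infty_c(\O) \subset C^1_0(\cl\O)$ with $\|\nabla u_\eta\|_{C^0(\cl\O;\Rn)} \leq 1$ and $u_\eta \to u$ uniformly on $\cl\O$ as $\eta \searrow 0$. Since $f \in L^2(\O) \subset L^1(\O)$ and $\O$ is bounded, uniform convergence implies $\ell(u_\eta) \to \ell(u)$, which shows that every objective value attainable in~\eqref{eq:limiting_local_predual_relax} is also attainable in the limit by admissible $u_\eta$ for~\eqref{eq:limiting_local_predual}.

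The construction is a \emph{shave-then-mollify} scheme. Select the $1$-Lipschitz continuous representative of the zero-extension of $u$ to $\R^n$, available because $W^{1,\infty}(\R^n)$ coincides with bounded Lipschitz functions. Define the shaved function
\[
u^\eta(x) = \operatorname{sign}(u(x))\,\bigl(|u(x)| - \eta\bigr)_+,
\]
i.e.\ the composition of $u$ with the $1$-Lipschitz soft-thresholding on $\R$. By composition, $u^\eta$ is again $1$-Lipschitz and $\|u - u^\eta\|_\infty \leq \eta$. Crucially, the $1$-Lipschitz property of $u$ and its boundary vanishing imply $|u(x)| \leq d(x,\partial\O)$ for all $x \in \R^n$, so $\supp u^\eta \subset \{x \in \cl\O : d(x,\partial\O) \geq \eta\}$, which is compact in $\O$. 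Finally, set $u_\eta = u^\eta * \rho_{\eta/2}$ with $\rho_{\eta/2}$ a standard mollifier supported in $B(0,\eta/2)$. Then $u_\eta \in C^\infty_c(\O)$; Young's inequality gives $\|\nabla u_\eta\|_\infty \leq \|\nabla u^\eta\|_\infty \leq 1$; and the mollification estimate $\|u_\eta - u^\eta\|_\infty \leq \eta/2$ combines with $\|u^\eta - u\|_\infty \leq \eta$ to yield $\|u_\eta - u\|_\infty \leq 3\eta/2 \to 0$.

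The main subtlety, and the reason for the shaving step, is the simultaneous enforcement of compact support in $\O$ and the \emph{strict} unit gradient bound. A naive multiplicative cut-off would inflate the gradient and only yield $\|\nabla u_\eta\|_\infty \leq 1 + o(1)$ (sufficient after rescaling, but less clean). The shaving trick sidesteps this by exploiting the Lipschitz regularity and boundary vanishing of $u$ itself to create a buffer zone of width $\eta$ around $\partial\O$ where $u^\eta$ already vanishes, at no cost in the Lipschitz constant.
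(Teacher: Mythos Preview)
Your proof is correct and follows the same strategy as the paper: the easy inequality via set inclusion, and the non-trivial one by approximating each admissible $u \in W^{1,\infty}_0(\O)$ with $\|\nabla u\|_{L^\infty} \leq 1$ by functions in $C^\infty_c(\O)$ that preserve the unit Lipschitz bound and converge uniformly. The only difference is that the paper obtains this approximating sequence by citing an external result (\cite[Remark~4.1]{deville2019approximation}), whereas you supply a self-contained shave-then-mollify construction; your argument is thus slightly more elementary while achieving the same conclusion.
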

\begin{proof}
  The inequality \(\overline{pd}^*_{\text{loc}}\geq pd^*_{\text{loc}}\) follows
  immediately from the inclusions \(C^1_0(\cl\O) \subset W^{1,\infty}_0(\O)\)
  and \(B_{C^0(\cl\O;\Rn)}\subset B_{L^\infty(\O;\Rn)}\).
  To show the opposite inequality, let \(u \in W^{1,\infty}_0(\O;\Rn)\) be such that
  \(\|\nabla u\|_{L^\infty(\O;\Rn)} \leq 1\).
  For an arbitrary \(\varepsilon >0\), according to \cite[Remark~4.1]{deville2019approximation},
  \(\exists v_\epsilon \in C^\infty_c(\O)\)
  such that the Lipschitz constant for \(v_\epsilon\) on \(\cl\O\) is no greater
  than the Lipschitz constant for \(u\) and \(\|u - v_{\epsilon}\|_{C^0(\cl\O)} \leq \varepsilon\).
  Therefore, \(\overline{pd}^*_{\text{loc}} \leq pd^*_{\text{loc}} + \varepsilon \|f\|_{L^1(\O)}\),
  thereby concluding the proof.
\end{proof}

We begin by describing the behaviour of~\eqref{eq:Tikhonov_dual_local} as \(\beta\searrow 0\).

\begin{proposition}\label{prop:tikh_exist_loc}
 The following statements hold.
  \begin{enumerate}
    \item For each \(\beta>0\) the infimum in~\eqref{eq:Tikhonov_local} is uniquely attained.
    \item For each \(\beta>0\), the strong duality holds: \(p^*_{\text{loc},\beta} = d^*_{\text{loc},\beta}\),
    and the infimum in~\eqref{eq:Tikhonov_dual_local} is attained.
    \item For each \(\overline{\beta}>0\), the family \(j^*_{\beta}(\Div^*\cdot)\), \(\beta \in (0,\overline{\beta}]\) is equi-coercive on \(L^2(\O)\).
    \item
    The functionals \(j^*_\beta(\Div^*\cdot):
    L^2(\O)\to\R\cup\{+\infty\}\) \(\Gamma\)-converge to
    \(\hat{\jmath}: L^2(\O)\to\R\cup\{+\infty\}\) as \(\beta\searrow 0\), where
      \[
      \hat{\jmath}(u) = \begin{cases}
      \ind_{B_{L^{\infty}(\O;\Rn)}}(\nabla u), &\quad u\in W^{1,2}_0(\O),\\
      +\infty,&\quad\text{otherwise}.
      \end{cases}.\]
   \item
  We have the equality \(\lim_{\beta\searrow 0} d^*_{\text{loc},\beta} = \overline{pd}^*_{\text{loc}}\), and
  any \(L^2(\O)\)-limit point of solutions to~\eqref{eq:Tikhonov_dual_local} is a solution
  to~\eqref{eq:limiting_local_predual_relax}.
 \end{enumerate}
\end{proposition}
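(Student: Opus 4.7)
The plan is to handle parts (i)--(ii) by standard convex analysis, parts (iii)--(iv) by exploiting the monotonicity $j_{\beta_1}\leq j_{\beta_2}$ for $\beta_1\leq\beta_2$ and its dual consequence $j^*_{\beta_1}\geq j^*_{\beta_2}$, and part (v) as an application of the fundamental theorem of $\Gamma$-convergence. Throughout, surjectivity of $\Div:\Q\to L^2(\O)$ provides the structural ingredient that makes the dual side well-behaved.

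For (i), $j_\beta$ is strictly convex and $L^2(\O;\Rn)$-coercive (by the Tikhonov term), and $\Q(f)$ is a nonempty closed affine subspace of $\Q$ (by surjectivity of $\Div$); the direct method applied to $j_\beta+\ind_{\Q(f)}$ on $\Q$ then yields a unique minimizer. For (ii), I would invoke the Fenchel--Rockafellar theorem with $F=j_\beta$, $G=\ind_{\{f\}}$, and $A=\Div$: since $F$ is convex and continuous throughout $\Q$ and $f\in\Div(\Q)$, the constraint qualification $0\in\innre(\Div\,\Q-\{f\})$ holds trivially, whence $p^*_{\text{loc},\beta}=d^*_{\text{loc},\beta}$ and the dual infimum in~\eqref{eq:Tikhonov_dual_local} is attained.

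For (iii), the bound $j^*_\beta(\Div^* \cdot)\geq j^*_{\overline{\beta}}(\Div^*\cdot)$ for $\beta\in(0,\overline{\beta}]$ reduces the task to proving coercivity of $u\mapsto j^*_{\overline{\beta}}(\Div^* u)$ alone. Given $u\in L^2(\O)$, surjectivity supplies $q_u\in\Q$ with $\Div q_u=-u$ and $\|q_u\|_{\Q}\leq C\|u\|_{L^2(\O)}$; testing $tq_u$ in the defining supremum and optimizing in $t>0$ gives $j^*_{\overline{\beta}}(\Div^* u)\gtrsim\|u\|^2_{L^2(\O)}$ for large $\|u\|_{L^2(\O)}$. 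For (iv), I verify directly the two $\Gamma$-convergence inequalities. For the $\Gamma$-$\limsup$, the constant recovery sequence $u_\beta\equiv u$ suffices: when $u\in W^{1,2}_0(\O)$ with $\|\nabla u\|_{L^\infty(\O;\Rn)}\leq 1$, integration by parts identifies $\langle\Div^* u,q\rangle$ with $(\nabla u,q)_{L^2(\O;\Rn)}$, and the pointwise inequality $\nabla u(x)\cdot q(x)-|q(x)|-\frac{\beta}{2}|q(x)|^2\leq 0$ forces $j^*_\beta(\Div^* u)=0=\hat{\j}(u)$, attained at $q\equiv 0$; the case $\hat{\j}(u)=+\infty$ is automatic. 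For the $\Gamma$-$\liminf$, if $u_\beta\to u$ in $L^2(\O)$ with $\liminf j^*_\beta(\Div^* u_\beta)=M<\infty$, then for any $q\in\Q\cap L^1(\O;\Rn)$ the estimate $j^*_\beta(\Div^* u_\beta)\geq\langle\Div^* u_\beta,q\rangle-\|q\|_{L^1(\O;\Rn)}-\frac{\beta}{2}\|q\|^2_{L^2(\O;\Rn)}$, after sending $\beta\searrow 0$ and then scaling $q\mapsto tq$ with $t\to\infty$, yields $\langle\Div^* u,q\rangle\leq\|q\|_{L^1(\O;\Rn)}$ for all such $q$. Testing $q\in C^\infty_c(\O;\Rn)$ identifies $\nabla u\in L^\infty(\O;\Rn)$ with $\|\nabla u\|_{L^\infty}\leq 1$; testing general $q\in\Q$ with arbitrary normal trace, integration by parts produces a boundary term $\int_{\partial\O}u(q\cdot\nu)$ whose sign can be flipped by $q\mapsto-q$, forcing it to vanish, whence $u\in W^{1,2}_0(\O)$ and $\hat{\j}(u)=0\leq M$.

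Part (v) then follows from the fundamental theorem of $\Gamma$-convergence: adding the $L^2(\O)$-continuous perturbation $-\ell$ preserves both $\Gamma$-convergence and equi-coercivity, so $\lim_\beta d^*_{\text{loc},\beta}=-\inf_{u\in L^2(\O)}[\hat{\j}(u)-\ell(u)]=\overline{pd}^*_{\text{loc}}$, where the last equality holds because $\{u\in L^2(\O):\hat{\j}(u)<\infty\}$ coincides with the admissible set of~\eqref{eq:limiting_local_predual_relax}; moreover any $L^2(\O)$-accumulation point of minimizers $u_\beta$ of~\eqref{eq:Tikhonov_dual_local} minimizes the limit functional and therefore solves~\eqref{eq:limiting_local_predual_relax}. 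The main obstacle is the $\Gamma$-$\liminf$ step in (iv), specifically the recovery of the zero-trace condition $u|_{\partial\O}=0$, which hinges on testing against noncompactly-supported fluxes $q\in\Q$ and exploiting the surjectivity of the normal-trace map onto $H^{-1/2}(\partial\O)$ in order to distinguish the domain $W^{1,\infty}_0(\O)$ appearing in~\eqref{eq:limiting_local_predual_relax} from the larger space $W^{1,\infty}(\O)$.
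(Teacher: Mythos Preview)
Your overall architecture is sound and parts (i), (ii), (v) match the paper's reasoning. Your route in (iii)--(iv) is genuinely different: the paper computes \(j^*_\beta(\Div^* u)\) \emph{explicitly} via soft thresholding, obtaining \(j^*_\beta(\Div^* u)=\frac{1}{2\beta}\|\max\{|\nabla u|-1,0\}\|_{L^2}^2\) when \(u\in W^{1,2}_0(\O)\) and \(+\infty\) otherwise, and then uses the fact that a monotone non-increasing family of lower semicontinuous functionals \(\Gamma\)-converges to its pointwise limit. Your alternative for (iii), testing \(tq_u\) with \(\Div q_u=-u\), is a legitimate and slightly slicker path to equi-coercivity that avoids the explicit formula.

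However, your \(\Gamma\)-\(\liminf\) argument in (iv) has a real gap at the zero-trace step. From sign-flipping \(q\mapsto -q\) you only obtain \(|\langle\gamma_0 u, q\cdot\nu\rangle|\leq 2\|q\|_{L^1(\O;\Rn)}\), which does \emph{not} force the boundary term to vanish; you would additionally need fluxes with prescribed normal trace and arbitrarily small \(L^1\)-norm, and surjectivity of the normal-trace map alone does not provide this. The clean fix is the one the paper uses implicitly in part (iii): test with \(q\in C^1_c(\Rn;\Rn)\subset\Q\) (not compactly supported in \(\O\)). Then \((u,\Div q)_{L^2(\O)}=(\tilde{u},\Div q)_{L^2(\Rn)}\) for the zero extension \(\tilde{u}\), and your bound \(|(u,\Div q)|\leq\|q\|_{L^1(\O)}\leq|\O|^{1/2}\|q\|_{L^2(\Rn)}\) shows \(\tilde{u}\in W^{1,2}(\Rn)\), hence \(u\in W^{1,2}_0(\O)\); this is exactly the combination of \cite[Proposition~9.18]{brezis2010functional} and \cite[Theorem~2.4]{girault_raviart} the paper cites.

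Finally, in (v) you assert that \(\{u\in L^2(\O):\hat{\j}(u)<\infty\}\) coincides with the feasible set of~\eqref{eq:limiting_local_predual_relax}, but this requires showing that \(u\in W^{1,2}_0(\O)\) with \(\|\nabla u\|_{L^\infty}\leq 1\) already lies in \(W^{1,\infty}_0(\O)\). The paper supplies this via Gagliardo--Nirenberg--Sobolev and Morrey's inequality applied to the zero extension; you should not skip it.
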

\begin{proof}
\begin{enumerate}
\item   The problem~\eqref{eq:Tikhonov_local} amounts to minimizing a convex, lower semicontinuous, and coercive functional \(j_\beta(q) + \ind_{\{f\}}(\Div q)\) over the Hilbert space \(\Q\).
    The solution exists owing to the generalized Weierstrass' theorem, see for example~\cite[Corollary 3.23]{brezis2010functional}.
%
\item Note that the conditions for strong Fenchel duality (see, for example, \cite[Theorem~4.4.3]{borwein2004techniques}) are fulfilled:
\(j_\beta: \Q\to\R\) and \(\ind_{\{f\}}: L^2(\O)\to\R\cup\{+\infty\}\) are both convex and lower semi-continuous, and \(\Div: \Q\to L^2(\O)\) is bounded and surjective.
Therefore the infimum in~\eqref{eq:Tikhonov_dual_local} is attained when finite.
Its finiteness follows from the feasbility of \(0\) in~\eqref{eq:Tikhonov_dual_local}, and the finiteness of \(p^*_{\text{loc},\beta}\) established previously.
\item
Note that owing to H{\"o}lder's inequality we have the estimate
\[
j_\beta(q) \leq \frac{\beta}{2} \|q\|^2_{L^2(\O;\Rn)} + |\O|^{1/2} \|q\|_{L^2(\O;\Rn)}, \qquad \forall q \in L^2(\O;\Rn).
\]
Let us now take an arbitrary \(u\in L^2(\O)\) and define
\[
\alpha = \sup_{q\in \Q, \|q\|_{L^2(\O;\Rn)}\leq 1} \langle \Div^* u, q\rangle
       = \sup_{q\in \Q, \|q\|_{L^2(\O;\Rn)}\leq 1} (u, \Div q)_{L^2(\O)}.
\]
If \(\alpha = +\infty\) then also
\begin{equation}\label{eq:jstar_lb}\begin{aligned}
   j_\beta^*(\Div^* u) &= \sup_{q\in\Q}[\langle \Div^* u,q\rangle - j_\beta(q)]
   \\&\geq
   \sup_{q\in \Q, \|q\|_{L^2(\O;\Rn)}\leq 1}\bigg[\langle \Div^* u,q\rangle - \frac{\beta}{2}-|\O|^{1/2}\bigg] = +\infty.
\end{aligned}\end{equation}
If, on the other hand, \((u, \Div q)_{L^2(\O)} \leq \alpha \|q\|_{L^2(\O;\Rn)}\) for all \(q\in C^1_c(\Rn)\subset \Q\) with \(\alpha < +\infty\), then \(u\in W^{1,2}_0(\O)\), see~\cite[Proposition~9.18]{brezis2010functional} and~\cite[Theorem~2.4]{girault_raviart}.
Consequently, owing to the dense inclusion \(\Q\in L^2(\O;\Rn)\)
and \(L^2(\O)\)-continuity of \(j_\beta\) we can write
\[
   j_\beta^*(\Div^* u) = \sup_{q\in\Q}[\langle \Div^* u, q\rangle - j_\beta(q)]
   = \sup_{q \in L^2(\O;\Rn)}[(\nabla u,q)_{L^2(\O;\Rn)} - j_\beta(q)].
\]
The second supremum above is attained at
\[
 \hat{q}_\beta = \argmin_{q\in L^2(\O;\Rn)}[
 \|q\|_{L^1(\O;\Rn)} + \frac{\beta}{2}\|q-\beta^{-1}\nabla u\|^2_{L^2(\O;\Rn)}
 ],
\]
which is given  pointwise by the soft thresholding operator
\(\hat{q}_{\beta}(x)=\beta^{-1}\max\{1-|\nabla u(x)|^{-1},0\}{\nabla u}(x)\).
Substituting this back into~\eqref{eq:jstar_loc} we arrive at the following expression:
\begin{equation}\label{eq:jstar_explicit}
  \begin{aligned}
j^*_\beta(\Div^* u)&=
\frac{1}{2\beta}\|\max\{|\nabla u(\cdot)|-1,0\}\|^2_{L^2(\O;\Rn)} \\&\leq \frac{1}{2\beta}\|\nabla u\|^2_{L^2(\O)} < +\infty.
\end{aligned}
\end{equation}
To summarise, if \(u\in L^2(\O)\setminus W^{1,2}_0(\O)\) then
\(j^*_\beta(\Div^* u)\equiv+\infty\).
If, on the other hand, \(u_k \in W^{1,2}_0(\O)\) is such that \(\|u_k\|_{L^2(\O)}\to\infty\), then owing to Poincar{\'e}'s inequality (see for example~\cite[Corollary~9.19]{brezis2010functional}) we have \(\|\nabla u_k\|_{L^2(\O;\Rn)} \geq C(\O)\|u_k\|_{L^2(\O)}\), for some \(C(\O)>0\) and therefore also
\((2\beta)^{-1}\|\max\{|\nabla u_k(\cdot)|-1,0\}\|^2_{L^2(\O;\Rn)} \geq (2\overline{\beta})^{-1}\|\max\{|\nabla u_k(\cdot)|-1,0\}\|^2_{L^2(\O;\Rn)}\to\infty\),
uniformly with respect to \(\beta \in (0,\overline{\beta}]\).
\item
The family of functionals \(j_\beta(\cdot)\) is monotonically non-decreasing with respect to \(\beta\), and therefore owing to the order-reversing property of convex conjugates the family of functionals \(j^*_\beta(\cdot)\) is monotonically non-increasing with respect to \(\beta\).
Furthremore, convex conjugates are lower semicontinuous, and \(\Div^*\) is an adjoint of a linear bounded operator, consequently the composite functionals \(j^*_\beta(\Div^* \cdot)\) are lower semicontinuous.
Therefore, in view of~\cite[Remark 2.12]{braides2006handbook} the \(\Gamma\)-limit of \(j^*_\beta(\Div^* \cdot)\) is given by the pointwise limit of these functionals.
This pointwise limit, for each \(u\in L^2(\O)\), is given precisely by \(\hat{\jmath}(u)=\lim_{\beta\searrow 0} j^*_\beta(\Div^* u)\),
see the bound~\eqref{eq:jstar_lb} and the explicit expression~\eqref{eq:jstar_explicit} for \(j^*_\beta\).
\item
Note that \(j^*_\beta(\Div^*\cdot)-\ell\) is still equi-coercive on \(L^2(\O)\) and \(\Gamma\)-converges to \(\hat{\jmath}(\cdot) - \ell\), since \(\Gamma\)-convergence is stable with respect to continuous perturbations, see for example~\cite[Remark 2.2]{braides2006handbook}.

We would like to show that the problem~\eqref{eq:limiting_local_predual_relax} is equivalent to minimizing \(\hat{\jmath}(\cdot) - \ell\) over \(L^2(\O)\),
for which it is sufficient to show that \(\hat{\jmath}(u)<+\infty\) implies \(u\in W^{1,\infty}_0(\O)\).
Suppose that \(u\in L^2(\O)\) is such that \(\hat{\jmath}(u)<+\infty\);
in particular \(u \in W^{1,2}_0(\O)\) and \(\|\nabla u\|_{L^\infty(\O;\Rn)}\leq 1\).
Therefore, \(u\) extended by zero on \(\Rn\setminus \O\) is in \(W^{1,2}(\Rn)\) with \(\|\nabla u\|_{L^\infty(\Rn; \Rn)}\le 1\).
Owing to Gagliardo--Nirenberg--Sobolev inequality, \(u \in L^p(\Rn)\) for some \(p>n\), and consequently also is H{\"o}lder continous on \(\Rn\) owing to Morrey's inequality.
In particular, we have that \(u \in W^{1,\infty}(\Rn)\) and consequently
also \(u \in W^{1,\infty}_0(\O)\).

 Therefore, in view of point 4, it remains to appeal to the fundamental theorem of \(\Gamma\)-convergence, see for example~\cite[Theorem 2.10]{braides2006handbook}.
\end{enumerate}
\end{proof}

The convergence of optimal fluxes follows from these preliminary results.
\begin{proposition}\label{thm:tikhonov_local}
  Let \(\hat{q}_\beta \in \Q\), \(\beta \in (0,\overline{\beta}]\),
  be the family of optimal solutions to~\eqref{eq:Tikhonov_local},
  which exist according to Proposition~\ref{prop:tikh_exist_loc} point 1.
  The following statements hold:
  \begin{enumerate}
    \item
    \(p^*_{\text{loc}} = \lim_{\beta\searrow 0} p^*_{\text{loc},\beta}\).
    In fact, \(p^*_{\text{loc}} = \lim_{\beta\searrow 0}
    \|\hat{q}_\beta\|_{L^1(\O;\Rn)}\) and
    \(\lim_{\beta\searrow 0} (\beta/2) \|\hat{q}_\beta\|^2_{L^2(\O;\Rn)} = 0\).
    \item The family of vector-valued Radon measures \(\hat{q}_\beta\mathcal{H}^n\)
    is relatively sequentially compact with respect to the weak convergence of measures in \(\mathcal{M}(\cl\O;\Rn)\),
    and each limit point of this family as \(\beta\searrow 0\),
    is an optimal solution to~\eqref{eq:limiting_local_dual_rig}.
  \end{enumerate}
\end{proposition}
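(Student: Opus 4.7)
My plan is to combine strong duality and the $\Gamma$-convergence result of Proposition~\ref{prop:tikh_exist_loc} with a direct sandwich argument on the Tikhonov objective, and then to extract a weak-$*$ accumulation point of the optimal fluxes via Banach--Alaoglu. I treat the two assertions in turn.

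For part~1 I would first chain Proposition~\ref{prop:tikh_exist_loc}(2) and~(5), Proposition~\ref{prop:pdeq}, and Theorem~\ref{thm1} to conclude
\[
\lim_{\beta\searrow 0} p^*_{\text{loc},\beta} = \lim_{\beta\searrow 0} d^*_{\text{loc},\beta} = \overline{pd}^*_{\text{loc}} = pd^*_{\text{loc}} = p^*_{\text{loc}}.
\]
Next, I would view each $\hat{q}_\beta\in\Q$, $\Div\hat{q}_\beta=f$, as the measure $\hat{q}_\beta\dx\in \cM$, which is feasible for~\eqref{eq:limiting_local_dual_rig} by integration by parts in~\eqref{measdiv} against $\phi\in C^1_0(\cl\O)$. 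Consequently its total variation $\|\hat{q}_\beta\|_{L^1(\O;\Rn)}$ is at least $p^*_{\text{loc}}$, while the non-negativity of the quadratic term gives
\[
p^*_{\text{loc}} \le \|\hat{q}_\beta\|_{L^1(\O;\Rn)} \le \|\hat{q}_\beta\|_{L^1(\O;\Rn)} + \tfrac{\beta}{2}\|\hat{q}_\beta\|^2_{L^2(\O;\Rn)} = p^*_{\text{loc},\beta}.
\]
Sandwiching and letting $\beta\searrow 0$ yields $\|\hat{q}_\beta\|_{L^1(\O;\Rn)}\to p^*_{\text{loc}}$, and subtraction delivers $(\beta/2)\|\hat{q}_\beta\|^2_{L^2(\O;\Rn)}\to 0$.

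For part~2, the uniform $L^1$-bound from part~1 translates to a uniform bound on the total variation of $\hat{q}_\beta\dx$ in $\cM=C^0(\cl\O;\Rn)^*$. Since $C^0(\cl\O;\Rn)$ is separable, Banach--Alaoglu produces a subsequence $\beta_k\searrow 0$ and a limit $q^*\in\cM$ with $\hat{q}_{\beta_k}\dx \wsto q^*$. To verify feasibility I would test against an arbitrary $\phi\in C^1_0(\cl\O)$: because $\nabla\phi\in C^0(\cl\O;\Rn)$, the weak-$*$ convergence lets me pass to the limit in
\[
\int_{\cl\O} \nabla\phi\cdot dq^* = \lim_{k\to\infty}\int_\O \nabla\phi\cdot \hat{q}_{\beta_k}\dx = -\lim_{k\to\infty}\int_\O \phi\, f\dx = -\int_{\cl\O}\phi\, dF,
\]
so $\Divx q^*=F$ by~\eqref{measdiv}. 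Weak-$*$ lower semicontinuity of total variation combined with part~1 then gives $|q^*|(\cl\O)\le \liminf_k\|\hat{q}_{\beta_k}\|_{L^1(\O;\Rn)}=p^*_{\text{loc}}$, so $q^*$ is an optimal solution to~\eqref{eq:limiting_local_dual_rig}.

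The substantive analytical work is already absorbed into Proposition~\ref{prop:tikh_exist_loc}, so the remaining difficulty is essentially bookkeeping. The one genuinely delicate point is matching the test spaces for $\Divx$: the definition~\eqref{measdiv} is tailored so that gradients of test functions lie in $C^0(\cl\O;\Rn)$, which is precisely the class required to pair against weak-$*$ limits in $\cM$. Everything else is a standard application of Banach--Alaoglu and lower semicontinuity of total variation.
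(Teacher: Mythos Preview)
Your proposal is correct and follows essentially the same argument as the paper: the same chain of equalities through Proposition~\ref{prop:tikh_exist_loc}(2)(5), Proposition~\ref{prop:pdeq}, and Theorem~\ref{thm1} to identify $\lim_{\beta\searrow 0} p^*_{\text{loc},\beta}=p^*_{\text{loc}}$, the same sandwich of $\|\hat q_\beta\|_{L^1}$ between $p^*_{\text{loc}}$ and $p^*_{\text{loc},\beta}$, and the same weak-$*$ compactness plus lower semicontinuity of total variation for part~2. The only differences are cosmetic (you separate the limit identification from the sandwich, and you name Banach--Alaoglu explicitly where the paper simply invokes relative sequential compactness of bounded sets of measures).
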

\begin{proof}
  \begin{enumerate}
  \item
  Since each \(\hat{q}_\beta\) is feasible in~\eqref{eq:limiting_local_dual_rig}, we have an estimate
  \begin{equation}\label{eq:prop90}\begin{aligned}
  p^*_{\text{loc}} &\leq
  \liminf_{\beta\searrow 0} \|\hat{q}_\beta\|_{L^1(\O;\Rn)}
  \leq
  \limsup_{\beta\searrow 0} \|\hat{q}_\beta\|_{L^1(\O;\Rn)}
  \\&\leq
  \lim_{\beta\searrow 0}\left[
  \|\hat{q}_\beta\|_{L^1(\O;\Rn)} + \frac{\beta}{2}\|\hat{q}_\beta\|_{L^2(\O;\Rn)}^2\right]
  = \lim_{\beta\searrow 0}p^*_{\text{loc},\beta}
  = p^*_{\text{loc}},
  \end{aligned}\end{equation}
  where the last equality follows from Proposition~\ref{prop:tikh_exist_loc}, points 2 and 5, Proposition~\ref{prop:pdeq}, and Theorem~\ref{thm1}.
  Therefore, equalities must hold throughout, and the claim follows.
  \item
  Note that~\eqref{eq:prop90} implies that
  the norms \(\|\hat{q}_\beta\|_{L^1(\O;\Rn)}\) remain bounded as
  \(\beta\searrow 0\).
  Consequently, the family \(\hat{q}_\beta\mathcal{H}^n\) is relatively sequentially compact with respect to the weak convergence of measures in \(\mathcal{M}(\cl\O;\Rn)\).

  Let \(\hat{q}\in \mathcal{M}(\cl\O;\Rn)\) be an arbitrary limit point of \(\hat{q}_\beta\mathcal{H}^n\)  along some subsequence \(\beta_{k}\searrow 0\).
  Then for each \(\phi \in C^1_0(\cl\O)\subset W^{1,2}_0(\O)\) we have the equality
  \[\begin{aligned}
  -\int_{\cl \O}\nabla \phi(x)\cdot \mathrm{d}\hat{q}(x)
  &=
  -\lim_{\beta_{k}\searrow 0} \int_{\O}\nabla \phi(x)\cdot \hat{q}_{\beta_{k}}(x) \dx
  \\&= \lim_{\beta_{k}\searrow 0} \int_{\O} \phi(x)\Div \hat{q}_{\beta_{k}}(x) \dx = \ell(\phi),
  \end{aligned}\]
  thereby establishing the feasibility of \(\hat{q}\) in~\eqref{eq:limiting_local_dual_rig}.
  Consequently, we have the bounds
  \[\begin{aligned}p^*_{\text{loc}} &\leq |\hat{q}|(\cl\O) \leq
  \liminf_{\beta_{k}\searrow 0} \| \hat{q}_{\beta_{k}} \|_{L^1(\O;\Rn)}
  =p^*_{\text{loc}},\end{aligned}\]
  thereby showing the optimality of \(\hat{q}\) in~\eqref{eq:limiting_local_dual_rig}.
  \end{enumerate}
\end{proof}

We now turn our attention to the problems~\eqref{eq:compl_flux_loc},
for which we can state the convergence result effectively mirroring Proposition~\ref{thm:tikhonov_local}.

\begin{theorem}\label{thm:vanish_limit_local}
  The family of optimal solutions \(q_\gamma \in \Q\)
  to~\eqref{eq:compl_flux_loc}, is bounded in \(L^1(\O;\Rn)\)
  as \(\gamma\searrow 0\).
  The corresponding family of vector valued Radon measures
  \(q_\gamma\mathcal{H}^n\),
  is relatively sequentially compact with respect to the weak convergence in \(\mathcal{M}(\cl\O;\Rn)\).
  Each limit point of this sequence is a solution to~\eqref{eq:limiting_local_dual_rig}.
\end{theorem}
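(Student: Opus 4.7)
The strategy mirrors that of Proposition~\ref{thm:tikhonov_local}: I would sandwich $\hat{p}^*_{\text{loc},\gamma}$ between the pointwise-optimal-$\kappa_0$ functional $\idloc^0$ from below and the Tikhonov energy from~\eqref{eq:Tikhonov_local} from above, and then transfer the already-established convergence of the Tikhonov minimizers.  The lower estimate is immediate: since $\kadm^\gamma\subset\kadm^0$, Proposition~\ref{prop:id0} gives $\idloc^\gamma(q)\ge\idloc^0(q)=(2|\O|)^{-1}\|q\|_{L^1(\O;\Rn)}^2$ for every $q\in\Q$, so evaluating at $q_\gamma$ yields $\|q_\gamma\|_{L^1(\O;\Rn)}^2\le 2|\O|\hat{p}^*_{\text{loc},\gamma}$.

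For the upper bound, given any test flux $\tilde{q}\in\Q(f)$ I would take as a conductivity competitor the truncation $\kappa_\gamma=\min\{\alpha|\tilde{q}|,\gamma^{-1}\overline{\kappa}\}$ with $\alpha=|\O|/\|\tilde{q}\|_{L^1(\O;\Rn)}$, which manifestly belongs to $\kadm^\gamma$.  Splitting $\int_\O\kappa_\gamma^{-1}|\tilde{q}|^2\dx$ along $\{\alpha|\tilde{q}|\le\gamma^{-1}\overline{\kappa}\}$ and its complement bounds the two parts by $\alpha^{-1}\|\tilde{q}\|_{L^1(\O;\Rn)}=\|\tilde{q}\|_{L^1(\O;\Rn)}^2/|\O|$ and $(\gamma/\overline{\kappa})\|\tilde{q}\|_{L^2(\O;\Rn)}^2$ respectively, so that
\[
2|\O|\idloc^\gamma(\tilde{q})\le\|\tilde{q}\|_{L^1(\O;\Rn)}^2+\frac{\gamma|\O|}{\overline{\kappa}}\|\tilde{q}\|_{L^2(\O;\Rn)}^2.
\]
Setting $\tilde{q}=\hat{q}_\beta$, the unique minimizer of~\eqref{eq:Tikhonov_local} for $\beta=\gamma|\O|/\overline{\kappa}$, the right-hand side becomes exactly $\|\hat{q}_\beta\|_{L^1(\O;\Rn)}^2+\beta\|\hat{q}_\beta\|_{L^2(\O;\Rn)}^2$, which tends to $(p^*_{\text{loc}})^2$ as $\gamma\searrow 0$ thanks to both convergence statements in Proposition~\ref{thm:tikhonov_local}.1.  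In particular, $\limsup_{\gamma\searrow 0}2|\O|\hat{p}^*_{\text{loc},\gamma}\le(p^*_{\text{loc}})^2$.

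Combining the two estimates gives $\limsup_{\gamma\searrow 0}\|q_\gamma\|_{L^1(\O;\Rn)}\le p^*_{\text{loc}}$, establishing the $L^1$ boundedness; consequently $\{q_\gamma\dx\}$ is weak-$*$ relatively sequentially compact in $\mathcal{M}(\cl\O;\Rn)$ by Banach--Alaoglu.  For any subsequential limit $\hat{q}$, testing the integration-by-parts identity~\eqref{measdiv} against $\phi\in C^1_0(\cl\O)$ and passing to the limit using $\Div q_\gamma=f$ in $L^2(\O)$ produces $\Divx\hat{q}=F$, exactly as in the last step of the proof of Proposition~\ref{thm:tikhonov_local}.2; weak-$*$ lower semicontinuity of the total variation then gives $|\hat{q}|(\cl\O)\le\liminf_{\gamma\searrow 0}\|q_\gamma\|_{L^1(\O;\Rn)}\le p^*_{\text{loc}}$, showing that $\hat{q}$ attains the infimum in~\eqref{eq:limiting_local_dual_rig}.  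The only non-routine step is the construction of the truncated conductivity $\kappa_\gamma$ that produces the right interpolation between $\|\tilde{q}\|^2_{L^1}$ and $\|\tilde{q}\|^2_{L^2}$; once this competitor is in place, the proof reduces to the already-available facts about the Tikhonov family.
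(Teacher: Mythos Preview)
Your proof is correct and follows essentially the same strategy as the paper: sandwich $\hat{p}^*_{\text{loc},\gamma}$ between $\idloc^0$ from below (via $\kadm^\gamma\subset\kadm^0$) and a Tikhonov-type quantity from above, then invoke Proposition~\ref{thm:tikhonov_local}. The only difference is in the construction of the upper bound: the paper uses the resistivity-set inclusion $\gamma\overline{\kappa}^{-1}+[\kadm^0]^{-1}\subseteq[\kadm^\gamma]^{-1}$ (harmonic addition of $\gamma^{-1}\overline{\kappa}$ with the optimal $\kappa_0$), whereas you build the explicit truncated competitor $\kappa_\gamma=\min\{\alpha|\tilde q|,\gamma^{-1}\overline{\kappa}\}$; both competitors lie in $\kadm^\gamma$ and produce the same $\|\tilde q\|_{L^1}^2 + O(\gamma)\|\tilde q\|_{L^2}^2$ structure, differing only in the value of $\beta$ fed into the Tikhonov family (which is immaterial as $\gamma\searrow 0$).
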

\begin{proof}
  We will prove the following equality:
  \begin{equation}\label{eq:thm40}
    \lim_{\gamma\searrow 0}\hat{p}^*_{\text{loc},\gamma}
    = \frac{1}{2|\O|} [p^*_{\text{loc}}]^2,
  \end{equation}
  where the expression in the right hand side comes from the relation between \(i^0_{\text{loc}}(\cdot)\) and \(\|\cdot\|_{L^1(\O;\Rn)}\) established in Proposition~\ref{prop:id0} and the discussion immediately thereafter.

  For each \(\gamma>0\) we have the inclusion \(\kadm^\gamma \subseteq\kadm^0\), which in turn implies the inequality
  \(i^0_{\text{loc}}(q) \leq i^\gamma_{\text{loc}}(q)\),
  \(\forall q \in \Q\).
  In particular, in view of Proposition~\ref{prop:id0}, and noting that each \(q_\gamma\) is feasible for~\eqref{eq:limiting_local_dual_rig}, we have the estimates
  \begin{equation}\label{eq:thm41}
    \frac{1}{2|\O|} [p^*_{\text{loc}}]^2
    \leq
    \frac{1}{2|\O|}\|q_\gamma\|^{2}_{L^1(\O;\Rn)}
    = i^0_{\text{loc}}(q_\gamma)
    \leq i^\gamma_{\text{loc}}(q_\gamma)
    = \hat{p}^*_{\text{loc},\gamma}.
  \end{equation}
We will now estimate \(\hat{p}^*_{\text{loc},\gamma}\) from above using Tikhonov regularization problems~\eqref{eq:Tikhonov_local}.
Let us put \([\kadm^\gamma]^{-1} = \{\, \kappa^{-1} \mid \kappa \in \kadm^\gamma\,\}\),
\(\gamma \geq 0\).
Then \(\gamma\overline{\kappa}^{-1} + [\kadm^0]^{-1} \subseteq [\kadm^\gamma]^{-1}\), \(\forall \gamma\geq 0\).
Therefore,
\begin{equation}\label{eq:thm42}
\begin{aligned}
  \hat{p}^*_{\text{loc},\gamma}
  &= \inf_{q\in\Q(f)}
  \inf_{\kappa \in \kadm^\gamma} \frac{1}{2}\int_{\O} \kappa^{-1}(x) |q(x)|^2\dx
  \\
  &\leq \inf_{q\in\Q(f)}
  \inf_{\kappa \in \kadm^0} \frac{1}{2}\int_{\O} [\gamma\overline{\kappa}^{-1} + \kappa^{-1}(x)] |q(x)|^2\dx
  \\&= \inf_{q\in\Q(f)}\bigg[\idloc^0(q) + \frac{\gamma\overline{\kappa}^{-1}}{2}\|q\|_{L^2(\O;\Rn)}^2\bigg]
  \\&\leq
  \idloc^0(\hat{q}_{\gamma\overline{\kappa}^{-1}}) + \frac{\gamma\overline{\kappa}^{-1}}{2}\|\hat{q}_{\gamma\overline{\kappa}^{-1}}\|_{L^2(\O;\Rn)}^2
  \\&= \frac{1}{2|\O|}\|\hat{q}_{\gamma\overline{\kappa}^{-1}}\|^{2}_{L^1(\O;\Rn)}
  + \frac{\gamma\overline{\kappa}^{-1}}{2}\|\hat{q}_{\gamma\overline{\kappa}^{-1}}\|_{L^2(\O;\Rn)}^2,
\end{aligned}
\end{equation}
where \(\hat{q}_{\gamma\overline{\kappa}^{-1}} \in \Q(f)\) is the unique solution to~\eqref{eq:Tikhonov_local}
for \(\beta=\gamma\overline{\kappa}^{-1}>0\).
It remains to let \(\gamma\searrow 0\) and to utilize
Proposition~\ref{thm:tikhonov_local}, point 1, to arrive to~\eqref{eq:thm40} from~\eqref{eq:thm41} and~\eqref{eq:thm42}.

To conclude the proof it is sufficient to argue along the lines of
Proposition~\ref{thm:tikhonov_local}, point 2.
Indeed, \eqref{eq:thm41} and~\eqref{eq:thm42} imply that
\(\lim_{\gamma\searrow 0} \idloc^0(q_{\gamma}) =
\lim_{\gamma\searrow 0} (2|\O|)^{-1} \|q_{\gamma}\|_{L^1(\O;\Rn)}^2
=
(2|\O|)^{-1} [p^*_{\text{loc}}]^2\),
and ultimately the existence, feasibility, and optimality of limit points
of \(q_{\gamma}\mathcal{H}^n\) in~\eqref{eq:limiting_local_dual_rig}.
\end{proof}

\section{Analysis of the limiting nonlocal problem}\label{sec:analysis}

\subsection{Existence of solutions to~\eqref{eq:prob_nl}}\label{sec:existence}

Unfortunately, as we mentioned previously, the natural space \(L^{1,2}(\Od\times\Od)\) for~\eqref{eq:prob_nl}
is not reflexive~\cite[Theorem~1, p.~306]{benedek1961space},
thereby seemingly presenting us with the same difficulties as problem~\eqref{eq:limiting_local_dual}.
However, its closed subspace \(L^{1,2}_a(\Od\times\Od)\) is significantly smaller than
\(L^{1,2}(\Od\times\Od)\); indeed, for a function \(\qnl \in L^{1,2}(\Od\times\Od)\)
there is little reason for its anti-symmetric and symmetric parts to be in \(L^{1,2}(\Od\times\Od)\).
In this section we will establish that the infimum \(p^*_{\delta,a}\) in~\eqref{eq:prob_nl} is attained.

Let us first recall the definition of biting convergence.
\begin{definition}\label{def:bite}
  Let \(X\) be a Banach space.
  A sequence \(q_k \in L^1(\Od;X)\), \(k=1,2,\dots\) is said to converge to
  \(\hat{q} \in L^1(\Od,X)\) in the biting sense (denoted by
  \(q_k\bite\hat{q}\)) if there is a non-increasing sequence of sets
  \(\Od\supseteq E_1 \supseteq E_2 \supseteq \dots\)
  with \(\lim_{m\to\infty}|E_m|=0\) such that \(q_k\rightharpoonup \hat{q}\),
  weakly in \(L^1(\Od\setminus E_m;X)\), for each \(m=1,2,\dots\)
\end{definition}
Reflexive Banach space version of the famous Chacon's biting lemma,
see~\cite{ball1989remarks}, asserts that any bounded set in \(L^1(\O;X)\)
with \(X\) reflexive
is sequentially relatively compact with respect to the biting convergence.
From now on we will utilize this property for functions in \(L^{1,2}_a(\Od\times\Od)\), which owing
to Fubini's theorem we will isometrically identify as the elements of \(L^1(\Od;L^2(\Od))\).
It turns out that the anti-symmetry of the functions in \(L^{1,2}_a(\Od\times\Od)\) can be utilized
to strengthen the biting convergence.
In particular, we have the following type of compact embedding of \(L^{1,2}_a(\Od\times\Od)\) into \(L^1(\Od\times\Od)\).
\begin{proposition}\label{prop:compact}
  The unit ball \(B_{L^{1,2}_a(\Od\times\Od)}\) is sequentially compact with respect to the biting convergence
  in \(L^1(\Od;L^2(\Od))\).
  Additionally, such biting convergence of sequences in \(B_{L^{1,2}_a(\Od\times\Od)}\) implies the weak
  convergence in \(L^1(\Od\times\Od)\), and therefore \(B_{L^{1,2}_a(\Od\times\Od)}\) is weakly sequentially
  compact in \(L^1(\Od\times\Od)\).
\end{proposition}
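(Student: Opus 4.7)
The plan is to first extract a biting-convergent subsequence via the reflexive-valued Chacon biting lemma, and then to exploit antisymmetry to upgrade the biting convergence to weak convergence in $L^1(\Od\times\Od)$. Via Fubini, $L^{1,2}(\Od\times\Od)$ is isometrically isomorphic to the Bochner space $L^1(\Od;L^2(\Od))$, so any bounded sequence $\{q_k\}\subset B_{L^{1,2}_a(\Od\times\Od)}$ admits a subsequence (not relabeled) together with $\hat q\in L^1(\Od;L^2(\Od))$ and decreasing sets $E_m\subset\Od$, $|E_m|\to 0$, for which $q_k\bite\hat q$. To see that $\hat q\in L^{1,2}_a$, I would test against arbitrary symmetric $\phi\in L^\infty((\Od\setminus E_m)^2)$: extended by zero to $(\Od\setminus E_m)\times\Od$, such a $\phi$ lies in $L^\infty(\Od\setminus E_m;L^2(\Od))$ on the bounded domain $\Od$ and is thus admissible for the biting weak convergence. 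Antisymmetry of each $q_k$ together with symmetry of $\phi$ gives $\int q_k\phi=0$, so passing to the weak limit yields $\int\hat q\phi=0$, forcing the symmetric part of $\hat q$ on $(\Od\setminus E_m)^2$ to vanish. Letting $m\to\infty$ and noting that $\bigcup_m(\Od\setminus E_m)^2$ exhausts $\Od\times\Od$ up to a null set concludes the first claim.

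The second claim rests on the following estimate, which is the only step where antisymmetry is genuinely exploited: for any measurable $E\subset\Od$ and any $q\in L^{1,2}_a(\Od\times\Od)$,
\begin{equation*}
\int_{E\times\Od}|q(x,x')|\,\mathrm{d}x\,\mathrm{d}x' = \int_{\Od\times E}|q(x,x')|\,\mathrm{d}x\,\mathrm{d}x' \le |E|^{1/2}\,\|q\|_{L^{1,2}(\Od\times\Od)}.
\end{equation*}
Here the equality follows from the pointwise identity $|q(x,x')|=|q(x',x)|$ together with Fubini, while the inequality follows by Cauchy--Schwarz applied to the inner integral in the small variable $E$ on the right-hand side. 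A standard lower semicontinuity argument for the Bochner norms $L^1(\Od\setminus E_m;L^2(\Od))$ combined with monotone convergence as $E_m\searrow$ yields $\|\hat q\|_{L^{1,2}}\le 1$, so the estimate above also applies to $\hat q$ with the right-hand side bounded by $|E_m|^{1/2}$. For any $\phi\in L^\infty(\Od\times\Od)$ I then split
\begin{equation*}
\int_{\Od\times\Od}(q_k-\hat q)\phi\,\mathrm{d}x\,\mathrm{d}x' = \int_{(\Od\setminus E_m)\times\Od}(q_k-\hat q)\phi\,\mathrm{d}x\,\mathrm{d}x' + \int_{E_m\times\Od}(q_k-\hat q)\phi\,\mathrm{d}x\,\mathrm{d}x',
\end{equation*}
the second term is bounded by $2|E_m|^{1/2}\|\phi\|_\infty$ uniformly in $k$, while the first vanishes as $k\to\infty$ by the biting weak convergence (again using the inclusion $L^\infty\hookrightarrow L^2$ on the bounded domain $\Od$). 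Taking $m$ large first, then $k$, gives $q_k\rightharpoonup\hat q$ weakly in $L^1(\Od\times\Od)$, and weak sequential compactness of $B_{L^{1,2}_a(\Od\times\Od)}$ in $L^1(\Od\times\Od)$ is immediate.

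The main hurdle will be keeping the bookkeeping straight: correctly identifying which $L^\infty(\Od\times\Od)$ test functions are admissible for the biting convergence interpreted in the Bochner space $L^1(\Od\setminus E_m;L^2(\Od))$, and verifying that antisymmetry genuinely passes to the biting limit. The decisive conceptual point, and the reason antisymmetry enters at all, is the swap identity reducing $E_m\times\Od$ (of measure $\sim|E_m||\Od|$) to $\Od\times E_m$ (small in the $x'$-direction), which unlocks a one-sided Cauchy--Schwarz bound extracting the $|E_m|^{1/2}$ decay factor directly from the $L^{1,2}$ norm.
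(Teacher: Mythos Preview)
Your proposal is correct and follows essentially the same approach as the paper: Chacon's biting lemma for the subsequence, then the antisymmetry swap $\int_{E_m\times\Od}|q|=\int_{\Od\times E_m}|q|$ combined with Cauchy--Schwarz in the inner variable to extract the uniform $|E_m|^{1/2}$ decay, which upgrades biting to weak $L^1$ convergence. The only organizational difference is that you establish antisymmetry and the norm bound for $\hat q$ \emph{before} proving weak convergence and then reuse the swap estimate for the $\hat q$ tail, whereas the paper handles the $\hat q$ tail by plain absolute continuity of the integral and deduces antisymmetry of $\hat q$ afterwards from the weak $L^1$ limit against symmetric test functions.
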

\begin{proof}
  Let \(\{\qnl_k\},\) \(k=1,2,\dots\) be an arbitrary sequence in \(B_{L^{1,2}_a(\Od\times\Od)}\).
  Owing to the reflexive Banach space version of Chacon's biting lemma,
  see~\cite{ball1989remarks}, there is a function
  \(\hat{\qnl}\in L^1(\O;L^2(\O))\) and a subsequence \(k'\) of \(\N\)
  such that \(\qnl_{k'} \bite \hat{\qnl}\).
  Let us now take an arbitrary \(\pnl \in L^\infty(\Od\times\Od)\subset
  L^\infty(\Od;L^2(\Od))\).
  Then
  \[
  \begin{aligned}
  &\bigg|\int_{\Od}\int_{\Od} [\qnl_{k'}(x,x')-\hat{\qnl}(x,x')]\pnl(x,x')\dx'\dx\bigg|
  \\&\le
  \bigg|\int_{\Od\setminus E_m}\int_{\Od} [\qnl_{k'}(x,x')-\hat{\qnl}(x,x')]\pnl(x,x')\dx'\dx\bigg|
  \\&+
  \int_{E_m}\int_{\Od} |\qnl_{k'}(x,x')\pnl(x,x')|\dx'\dx
  \\&+
  \int_{E_m}\int_{\Od} |\hat{\qnl}(x,x')\pnl(x,x')|\dx'\dx
  = I_{1,m,k'} + I_{2,m,k'} + I_{3,m},
  \end{aligned}
  \]
  where \(\{E_m\}\) is vanishing non-increasing sequence of measurable
  subsets of \(\Od\) as discussed in Definition~\ref{def:bite}.
  Using the anti-symmetry of \(\qnl_{k'}\), Fubini's theorem, and H{\"o}lder's inequality,
  we estimate \(I_{2,m,k'}\) as
  \[
  \begin{aligned}
  I_{2,m,k'} &\le
  \|\pnl\|_{L^\infty(\Od\times\Od)}
  \int_{E_m}\int_{\Od} |\qnl_{k'}(x,x')|\dx'\dx
  \\&=
  \|\pnl\|_{L^\infty(\Od\times\Od)}
  \int_{\Od}\int_{E_m} |\qnl_{k'}(x,x')|\dx'\dx
  \\&\le
  \|\pnl\|_{L^\infty(\Od\times\Od)}
  \int_{\Od}
  \bigg[\int_{\Od} |\qnl_{k'}(x,x')|^2\dx'\bigg]^{1/2}
  \bigg[\int_{\Od} |\chi_{E_m}(x')|^2\dx'\bigg]^{1/2}
  \dx
  \\&=
  \|\pnl\|_{L^\infty(\Od\times\Od)}
  \|\qnl_{k'}\|_{L^{1,2}(\Od\times\Od)} |E_m|^{1/2},
  \end{aligned}
  \]
  where \(\chi_{E_m}\) is the characteristic function of \(E_m\).
  As \(\|\qnl_k\|_{L^{1,2}(\Od\times\Od)}\le 1\), for \(k=1,2,\dots\), for an arbitrary but fixed
  \(\epsilon >0\) we can choose \(M\in \N\) so large that \(I_{2,m,k'}<\epsilon\),
  for all \(m\ge M\) and \(k'\).
  Furthermore, owing to the absolute continuity of the
  integral we can choose \(m\geq M\) such that \(I_{3,m} < \epsilon\).
  Then, for \(m\) fixed we can select \(K\) large enough so that for all \(k'\geq K\) we have the inequality
  \(I_{1,m,k'}<\epsilon\), thereby showing that \(\qnl_{k'} \rightharpoonup \hat{\qnl}\)
  in \(L^1(\Od\times\Od)\).
  The anti-symmetry of \(\hat{\qnl}\) is established by considering symmetric test
  functions \(\pnl\).
  Finally, \[\begin{aligned}\|\hat{\qnl}\|_{L^{1,2}(\Od\times\Od)}
  &= \limsup_{m\to\infty} \|\hat{\qnl}\|_{L^1(\Od\setminus E_m;L^2(\Od))}
  \\&\leq \limsup_{m\to\infty} \liminf_{k'\to\infty}\|\qnl_{k'}\|_{L^1(\Od\setminus E_m;L^2(\Od))}
  \\&\leq \liminf_{k'\to\infty}\|\qnl_{k'}\|_{L^1(\Od;L^2(\Od))}
  \leq 1,\end{aligned}\]
  thereby concluding the proof.
\end{proof}

Next we deal with the continuity of the divergence operator with respect to the
biting convergence.

\begin{proposition}\label{prop:divcont}
  The linear operator \(\diverx: L^{1,2}_a(\Od\times\Od) \to [W^{1,\infty}_0(\O)]'\) defined by~\eqref{eq:divx}
  transforms bounded biting-convergent sequences in \(L^{1,2}_a(\Od\times\Od)\) into weak\(^*\) convergent
  sequences in \([W^{1,\infty}_0(\O)]'\).
\end{proposition}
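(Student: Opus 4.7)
My plan is to reduce the weak$^\ast$ convergence in $[W^{1,\infty}_0(\O)]'$ to pointwise convergence of the duality pairing: given $v \in W^{1,\infty}_0(\O)$, I want to show that
\[
\int_{\Od}\int_{\Od}(\qnl_k-\hat{\qnl})(x,x')\grad v(x,x')\,\mathrm{d}x'\,\mathrm{d}x \longrightarrow 0
\]
as $k\to\infty$. Here $\qnl_k\bite\hat{\qnl}$ is the given bounded biting-convergent sequence and $\hat{\qnl}\in L^{1,2}_a(\Od\times\Od)$ by Proposition~\ref{prop:compact}. Let $M_0=\sup_k \|\qnl_k\|_{L^{1,2}(\Od\times\Od)}+\|\hat{\qnl}\|_{L^{1,2}(\Od\times\Od)}$ and let $\{E_m\}$ be the vanishing exceptional sets furnished by biting convergence. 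I would split the integration domain into $(\Od\setminus E_m)\times\Od$ and $E_m\times\Od$ and handle each piece separately.

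On the good part $(\Od\setminus E_m)\times\Od$, Proposition~\ref{prop:grad} guarantees that the restriction of $\grad v$ lies in $L^\infty(\Od\setminus E_m;L^2(\Od))$, which is precisely the topological dual of $L^1(\Od\setminus E_m;L^2(\Od))$. The very definition of biting convergence then yields
\[
\int_{\Od\setminus E_m}\int_{\Od}(\qnl_k-\hat{\qnl})\grad v \,\mathrm{d}x'\,\mathrm{d}x \longrightarrow 0 \quad (k\to\infty)
\]
for each fixed $m$, so this term is harmless once $m$ is frozen.

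The main obstacle is the bad part on $E_m\times\Od$, where two difficulties compound: $\grad v$ need not lie in $L^\infty(\Od\times\Od)$ (the kernel $\Wd$ may be singular at the origin), and $\qnl_k$ is not equi-integrable in $L^1(\Od;L^2(\Od))$ on $E_m$. To overcome this I would introduce a truncation $\grad v = g_R + r_R$ with $g_R=\grad v\cdot\mathbf{1}_{\{|\grad v|\le R\}}$ (bounded by $R$ and still anti-symmetric) and remainder $r_R$. For $g_R$, the anti-symmetry trick already used in the proof of Proposition~\ref{prop:compact}, applied to $\qnl_k-\hat{\qnl}\in L^{1,2}_a(\Od\times\Od)$, gives
\[
\int_{E_m\times\Od}|\qnl_k-\hat{\qnl}|\,\mathrm{d}x'\,\mathrm{d}x \;\le\; M_0 |E_m|^{1/2},
\]
so the $g_R$-contribution is at most $RM_0|E_m|^{1/2}$. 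For $r_R$, H\"older's inequality between $L^{1,2}$ and $L^{\infty,2}$ gives the bound $M_0\|r_R\|_{L^{\infty,2}(\Od\times\Od)}$, and the pointwise Lipschitz estimate $|\grad v(x,x')|\le \|\nabla v\|_{L^\infty(\Rn)}\Wd(x-x')|x-x'|$ combined with the change of variables $z=x'-x$ yields
\[
\int_{\Od}|r_R(x,x')|^2\,\mathrm{d}x' \;\le\; \|\nabla v\|_{L^\infty}^2\int_{\{z:\,\|\nabla v\|_{L^\infty}\Wd(z)|z|>R\}}\Wd(z)^2|z|^2\,\mathrm{d}z,
\]
a quantity independent of $x$ that vanishes as $R\to\infty$ thanks to the normalization $\int_{\Rn}\Wd(z)^2|z|^2\,\mathrm{d}z=K_{2,n}^{-1}<\infty$ and absolute continuity of the integral.

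The proof then concludes by the usual three-$\varepsilon$ cascade: given $\varepsilon>0$, first pick $R$ so large that $M_0\|r_R\|_{L^{\infty,2}}<\varepsilon/3$; next pick $m$ so large that $RM_0|E_m|^{1/2}<\varepsilon/3$; finally, for this fixed $m$, use the good-set convergence to find $K$ with the remaining term below $\varepsilon/3$ for all $k\ge K$. Summing the three estimates gives $|\langle\diverx(\qnl_k-\hat{\qnl}),v\rangle|<\varepsilon$ for $k\ge K$, which is exactly the desired weak$^\ast$ convergence in $[W^{1,\infty}_0(\O)]'$.
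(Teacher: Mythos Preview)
Your proof is correct, but it takes a somewhat more elaborate route than the paper's. The difference lies entirely in how the ``bad set'' contribution over \(E_m\times\Od\) is handled.

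The paper avoids your truncation \(\grad v = g_R + r_R\) altogether by exploiting that \emph{both} \(\qnl_k\) and \(\grad v\) are anti-symmetric, so their product is symmetric and Fubini gives
\[
\int_{E_m}\int_{\Od}\qnl_k\,\grad v\,\mathrm{d}x'\,\mathrm{d}x
=\int_{\Od}\int_{E_m}\qnl_k\,\grad v\,\mathrm{d}x'\,\mathrm{d}x.
\]
Then Cauchy--Schwarz in the inner integral and the refined estimate~\eqref{eq:grad} from Proposition~\ref{prop:grad} bound \(\bigl[\int_{E_m}|\grad v|^2\,\mathrm{d}x'\bigr]^{1/2}\) uniformly in \(x\) by \(\|v\|_{W^{1,\infty}}\sup_{|F|\le|E_m|}\bigl[\int_F \Wd^2(z)|z|^2\,\mathrm{d}z\bigr]^{1/2}\), which vanishes with \(|E_m|\). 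This handles the bad set in a single step, with no auxiliary parameter \(R\).

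Your truncation achieves the same goal but at the cost of an extra layer in the \(\varepsilon\)-cascade: you first tame the tail of \(\grad v\) via \(R\), then the bad set via \(m\), then the good set via \(k\). In effect, you rediscover the content of~\eqref{eq:grad} by hand through the tail estimate on \(r_R\). Both arguments rest on the same two structural facts---anti-symmetry and the finiteness of \(\int_{\Rn}\Wd^2(z)|z|^2\,\mathrm{d}z\)---so neither is more general; the paper's version is simply more direct.
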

\begin{proof}
  Assume that \(B_{L^{1,2}_a(\Od\times\Od)} \ni \qnl_k \bite \hat{\qnl}\) as \(k\to\infty\),
  and
  \(\{E_m\}\) is the vanishing non-increasing sequence of measurable
  subsets of \(\Od\) as discussed in Definition~\ref{def:bite}.
  Let \(v \in W^{1,\infty}_0(\O)\) be arbitrary.
  Then, for each \(k,m=1,2,\dots\) we have the estimate
  \[
  \begin{aligned}
    |\langle \diverx (\qnl_k-\hat{\qnl}),v \rangle|
    &=
    \bigg|
    \int_{\Od}\int_{\Od} [\qnl_k(x,x')-\hat{\qnl}(x,x')]\grad v(x,x')\dx'\dx
    \bigg|
    \\&\leq
    \bigg|
    \int_{\Od\setminus E_m}\int_{\Od} [\qnl_k(x,x')-\hat{\qnl}(x,x')]\grad v(x,x')\dx'\dx
    \bigg|
    \\&+
    \bigg|
    \int_{E_m}\int_{\Od} \qnl_k(x,x')\grad v(x,x')\dx'\dx
    \bigg|
    \\&+
    \bigg|
    \int_{E_m}\int_{\Od} \hat{\qnl}(x,x')\grad v(x,x')\dx'\dx
    \bigg|
    \\&=|I_{1,m,k}| + |I_{2,m,k}| + |I_{3,m}|.
  \end{aligned}
  \]
  Let us fix an arbitrary \(\epsilon>0\).
  Owing to the continuity of the integral we can select \(M\in\N\) such that
  \(\forall m\geq M\) we have \(|I_{3,m}|<\epsilon\).
  We then utilize the antisymmetry of both \(\qnl_k\) and \(\grad v\)
  to change the integration order in \(I_{2,m,k}\),
  and apply Cauchy--Bunyakovsky--Schwarz and then H{\"o}lder inequalities as follows:
  \[
  \begin{aligned}
    |I_{2,m,k}|
    &=
    \bigg|\int_{\Od} \int_{E_m}\qnl_k(x,x')\grad v(x,x')\dx'\dx\bigg|
    \\&\leq
    \int_{\Od} \bigg[\int_{E_m}|\qnl_k(x,x')|^2\dx'\bigg]^{1/2}
    \bigg[\int_{E_m}|\grad v(x,x')|^2\dx']\bigg]^{1/2}\dx
    \\&\leq
    \underbrace{\|\qnl_k\|_{L^{1,2}(\Od\times\Od)}}_{\leq 1}
    \|v\|_{W^{1,\infty}(\O)}\sup_{F:F\subseteq \Rn, |F|\leq |E_m|}
    \bigg[\int_{F} \Wd^2(z)|z|^2\,{\mathrm d}z\bigg]^{1/2},
  \end{aligned}
  \]
  where we have utilized~\eqref{eq:grad}.
  We now select and fix \(m\geq M\) such that \(|I_{2,m,k}|<\epsilon\),
  \(\forall k \in \N\).
  With such an \(m\) fixed we can find \(K\in\N\) such that \(\forall k\geq K\)
  we have \(|I_{1,m,k}|<\epsilon\), owing to the biting convergence.
  Thus \(|\langle \diverx (\qnl_k-\hat{\qnl}),v \rangle|<3\epsilon\), \(\forall k\geq K\),
  and therefore \(\diverx \qnl_k \wsto \diverx \hat{\qnl}\) in \([W^{1,\infty}_0(\O)]'\).
\end{proof}

Together, Propositions~\ref{prop:compact} and~\ref{prop:divcont} imply the following theorem.

\begin{theorem}\label{thm:existence}
  The problem~\eqref{eq:prob_nl} admits an optimal solution.
\end{theorem}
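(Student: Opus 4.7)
The plan is a straightforward direct method, powered entirely by Propositions~\ref{prop:compact} and~\ref{prop:divcont}. First, since $\Qdx^a(f)$ is nonempty (as noted just before~\eqref{eq:prob_nl}) and every feasible $\qnl$ has finite $L^{1,2}$-norm, $p^*_{\delta,a}<+\infty$. I take a minimizing sequence $\{\qnl_k\}\subset\Qdx^a(f)$ with $\|\qnl_k\|_{L^{1,2}(\Od\times\Od)}\searrow p^*_{\delta,a}$; in particular, $\{\qnl_k\}$ is bounded in $L^{1,2}_a(\Od\times\Od)$. By Proposition~\ref{prop:compact} (applied after rescaling by the bound), I extract a (non-relabeled) subsequence and a limit $\hat{\qnl}\in L^{1,2}_a(\Od\times\Od)$ with $\qnl_k\bite\hat{\qnl}$, and the lower-semicontinuity bound shown at the end of the proof of Proposition~\ref{prop:compact} yields
\[
\|\hat{\qnl}\|_{L^{1,2}(\Od\times\Od)}\leq\liminf_{k\to\infty}\|\qnl_k\|_{L^{1,2}(\Od\times\Od)}=p^*_{\delta,a}.
\]

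Next, I verify feasibility. By Proposition~\ref{prop:divcont}, the biting convergence forces $\diverx\qnl_k\wsto\diverx\hat{\qnl}$ in $[W^{1,\infty}_0(\O)]'$. Since $\diverx\qnl_k=f$ in $L^2(\O)$ for every $k$, viewed through the continuous injection $L^2(\O)\hookrightarrow[W^{1,\infty}_0(\O)]'$ given by $u\mapsto (u,\cdot)_{L^2(\O)}$, the whole sequence $\{\diverx\qnl_k\}$ is constant in $[W^{1,\infty}_0(\O)]'$, so its weak$^*$ limit is $f$. Therefore
\[
\langle\diverx\hat{\qnl},v\rangle=(f,v)_{L^2(\O)},\qquad\forall v\in W^{1,\infty}_0(\O).
\]

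It remains to promote this identity from a functional equality on $W^{1,\infty}_0(\O)$ to the statement $\hat{\qnl}\in\Qdx$ with $\diverx\hat{\qnl}=f\in L^2(\O)$ in the sense of Proposition~\ref{prop:closable}. The functional $v\mapsto(f,v)_{L^2(\O)}$ is bounded on $W^{1,\infty}_0(\O)$ with respect to the $L^2(\O)$-norm, and $W^{1,\infty}_0(\O)$ is dense in $L^2(\O)$; hence the identity $\langle\diverx\hat{\qnl},\cdot\rangle=(f,\cdot)_{L^2(\O)}$ extends by continuity to all of $L^2(\O)$, matching exactly the characterization of the domain of the closure in Proposition~\ref{prop:closable}. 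Consequently $\hat{\qnl}\in\Qdx^a(f)$, and combined with the lower-semicontinuity bound above, $\hat{\qnl}$ attains the infimum in~\eqref{eq:prob_nl}.

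The only subtle point, and the one I expect to require the most care, is reconciling the two meanings of the symbol $\diverx$: the closed unbounded operator into $L^2(\O)$ of Proposition~\ref{prop:closable} versus the everywhere-defined extension into $[W^{1,\infty}_0(\O)]'$ used in Proposition~\ref{prop:divcont}. The density of $W^{1,\infty}_0(\O)$ in $L^2(\O)$ is what bridges the two, and it is essential that the limit of a \emph{constant} sequence in $[W^{1,\infty}_0(\O)]'$ is identified with its value in $L^2(\O)$—something the weak$^*$ topology is exactly strong enough to guarantee. Every other ingredient (nonemptiness of the feasible set, biting compactness, lower semicontinuity, and biting continuity of $\diverx$) is already packaged in the cited results.
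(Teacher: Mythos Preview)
Your proof is correct and follows exactly the paper's approach---the direct method via Propositions~\ref{prop:compact} and~\ref{prop:divcont}---only with considerably more detail than the paper's own two-sentence sketch. The subtle point you flag about reconciling the two readings of $\diverx$ is genuine but is equally glossed over in the paper (see also the proof of Theorem~\ref{thm:tikhonov}, part~2), so your treatment is at least as careful as theirs.
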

\begin{proof}
  Note that for each \(f\in L^2(\O)\), Proposition~\ref{prop:diver0} implies
  that \(\Qdx^a(f)\neq\emptyset\).
  Then, Propositions~\ref{prop:compact} and~\ref{prop:divcont} show that the lower
  level sets of the objective function \(\|\cdot\|_{L^{1,2}(\Od\times\Od)}\) over \(\Qdx^a(f)\)
  are sequentially compact with respect to the biting convergence.
\end{proof}

\subsection{Fenchel dual of~\eqref{eq:prob_nl} and existence of optimal temperatures}
\label{sec:fenchel_duals}

Our analysis of the approximations to the problem~\eqref{eq:prob_nl} will proceed along the lines outlined in Section~\ref{sec:analysis_loc}.
To this end we would like to compute the Fenchel dual of~\eqref{eq:prob_nl}.
Whereas we offer no explicit formula for this problem, we will compute upper and lower estimates, which will prove to be useful in what follows.

We will benefit from being able to relate the adjoint operators for \(\diver\) and \(\diverx^*\) with the nonlocal gradient operator \(\grad\) as defined by~\eqref{eq:defgrad}.
In particular, we have the following analogue of~\cite[Proposition~9.18]{brezis2010functional}.
\begin{proposition}\label{prop:adj}
  Let us fix an arbitrary \(u \in L^2(\O)\).
  \begin{enumerate}
      \item The function \(u\) belongs to \(\Udz\) if and only if there is a constant \(c \geq 0\), such that the inequality \(|(u, \diver\qnl)_{L^2(\O)}| \leq c \|\qnl\|_{L^2(\Od\times\Od)}\) holds for all \(\qnl \in \Qd^a\).
      If this is the case, we have the equality \((u, \diver\qnl)_{L^2(\O)} = -(\grad u, \qnl)_{L^2(\Od\times\Od)}\), \(\forall \qnl \in \Qd\).
       \item The function \(\grad u\) belongs to \(L^{\infty,2}(\Od\times\Od)\) if and only if there is a constant \(c \geq 0\), such that the inequality \((u, \diverx\qnl)_{L^2(\O)} \leq c \|\qnl\|_{L^{1,2}(\Od\times\Od)}\) holds for all \(\qnl \in \Qdx\).
      If this is the case, we have the equality
      \[(u, \diverx\qnl)_{L^2(\O)} = -\int_{\Od}\int_{\Od}\grad u(x,x')\qnl(x,x')\dx\dx', \qquad \forall \qnl \in \Qdx.\]
  \end{enumerate}
\end{proposition}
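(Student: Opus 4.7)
Both claims are adjoint-type characterizations via a uniform bound on a pairing, and I would treat them by the same pattern: necessity from the defining adjoint identity plus Cauchy--Schwarz or H\"older, and sufficiency from Hahn--Banach extension, a representation via duality, and an identification step through density.

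For Part 1, necessity is immediate from~\eqref{eq:diver}: if $u \in \Udz$, then $(u, \diver \qnl)_{L^2(\O)} = -(\grad u, \qnl)_{L^2(\Od\times\Od)}$ for every $\qnl \in \Qd \supset \Qds$, whence Cauchy--Schwarz supplies the bound with $c = \|\grad u\|_{L^2(\Od\times\Od)}$. For sufficiency, I would first observe that the restriction to $\qnl \in \Qds$ is not a genuine restriction. Decomposing an arbitrary $\qnl \in \Qd$ into symmetric and antisymmetric $L^2$-parts $\qnl = \qnl_s + \qnl_a$, the pointwise antisymmetry of $\grad v$ forces $(\qnl_s, \grad v)_{L^2(\Od\times\Od)} = 0$ for every $v \in \Udz$; hence $\qnl_s, \qnl_a \in \Qd$, $\diver \qnl = \diver \qnl_a$, $\qnl_a \in \Qds$, and $\|\qnl_a\|_{L^2(\Od\times\Od)} \leq \|\qnl\|_{L^2(\Od\times\Od)}$. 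The assumed bound on $\Qds$ thus extends to all of $\Qd$ with the same constant. Since $\grad$ is closed and densely defined with $\grad^* = -\diver$ by~\eqref{eq:diver} and Proposition~\ref{prop:grad0}, one has $\grad^{**} = \grad$, and the standard adjoint-domain characterization forces $u \in D(\grad) = \Udz$; the formula is then just~\eqref{eq:diver} again.

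For Part 2, the pivotal preliminary step is to establish the identity
\[
(u, \diver \qnl)_{L^2(\O)} = -\int_{\Od}\int_{\Od} \qnl(x,x')\grad u(x,x')\dx\dx', \qquad \forall\, u \in L^2(\O),\ \qnl \in \Qd,
\]
\emph{without} any regularity hypothesis on $\grad u$ beyond the pointwise definition~\eqref{eq:defgrad}. I would derive it by unfolding~\eqref{eq:diver} into an explicit pointwise representation of $\diver \qnl$ (testing against smooth $v$ and using antisymmetry of $\grad v$ together with radiality of $\Wd$), then applying Fubini to recombine the iterated integrals; absolute integrability comes from $u \in L^2(\O)$, $\qnl_a \in L^2(\Od\times\Od)$, and the structural assumptions on $\Wd$. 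Necessity of Part 2 then follows by pairing this identity with H\"older's inequality in the $L^{1,2}$--$L^{\infty,2}$ duality, and extending from $\qnl \in \Qd$ to $\qnl \in \Qdx$ via Proposition~\ref{prop:closable}: approximate $\qnl \in \Qdx$ in the graph norm by $\qnl_k \in \Qd$ and pass to the limit in both sides using $L^{1,2}$-convergence of $\qnl_k$, $L^2$-convergence of $\diver \qnl_k$, and the fixed $L^{\infty,2}$-bound on $\grad u$. For sufficiency, the hypothesis (applied also with $-\qnl$ in place of $\qnl$) makes $\qnl \mapsto (u, \diverx \qnl)_{L^2(\O)}$ a continuous linear functional on $\Qdx \subset L^{1,2}(\Od\times\Od)$; Hahn--Banach extends it to $L^{1,2}(\Od\times\Od)$, and the isometric identification $[L^{1,2}(\Od\times\Od)]^* \cong L^{\infty,2}(\Od\times\Od)$ recalled in Section~2.4 represents it as $-\int\int h\,\qnl\dx\dx'$ for some $h \in L^{\infty,2}(\Od\times\Od)$, which may be taken antisymmetric since the functional vanishes on symmetric $\qnl \in \Qd$. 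Combined with the auxiliary identity on $\Qd$, this yields
\[
\int_{\Od}\int_{\Od} \qnl(x,x')\bigl(h(x,x') - \grad u(x,x')\bigr)\dx\dx' = 0, \qquad \forall\, \qnl \in \Qd,
\]
and testing with smooth compactly supported antisymmetric $\qnl$ (whose membership in $\Qd$ follows from the same direct computation used for the auxiliary identity) forces $\grad u = h$ almost everywhere, so that $\grad u \in L^{\infty,2}_a(\Od\times\Od)$.

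The main obstacle I anticipate is the auxiliary identity underpinning Part 2: it must be justified for arbitrary $u \in L^2(\O)$ without any a priori integrability of $\grad u$, and the Fubini step leans on antisymmetry of $\qnl_a$ together with radiality of $\Wd$ in order to produce absolutely integrable iterated integrals. Once this identity is in place, the rest is a standard assembly of Hahn--Banach extension, the adjoint-domain characterization for closed operators, and the $L^{1,2}$--$L^{\infty,2}$ duality already developed in Section~2.4.
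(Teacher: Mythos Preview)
Your treatment of Part~1 is correct and in fact tidier than the paper's. The paper establishes both parts through the same explicit computation: it fixes \(\qnl \in C^{0,1}(\Od\times\Od)\), invokes the pointwise formula for \(\diver\qnl\) from~\cite{eb2021}, truncates the domain of integration to \(O_\epsilon = \{(x,x')\in\Od\times\Od : |x-x'|>\epsilon\}\), applies Fubini there, and carries the identity only as the limit \(F_u(\qnl) = -\lim_{\epsilon\searrow 0}\iint_{O_\epsilon}\qnl\,\grad u\); Riesz representation on \(L^2_a(\Od\times\Od)\) then produces \(\pnl\), which is identified with \(\grad u\) on each \(O_\epsilon\). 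Your route through the symmetric/antisymmetric decomposition of \(\qnl\in\Qd\) (yielding \(\diver\qnl=\diver\qnl_a\)) and the closed-operator identity \(\diver^* = -\grad^{**} = -\grad\) bypasses all of this for Part~1, at the price of being less explicit.

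In Part~2, however, your auxiliary identity as stated is a genuine gap. You assert \((u,\diver\qnl)_{L^2(\O)} = -\iint_{\Od\times\Od}\qnl\,\grad u\) for \emph{every} \(u\in L^2(\O)\) and \(\qnl\in\Qd\), with absolute integrability coming from ``\(u\in L^2\), \(\qnl_a\in L^2\), and the structural assumptions on \(\Wd\)''. But the only hypothesis on \(\Wd\) is the normalization \(\int|z|^2\Wd^2(z)\,\mathrm{d}z<\infty\), which does not force \(\Wd\in L^1(\Rn)\); e.g.\ in \(n=2\) one may take \(\Wd(z)\sim |z|^{-2}|\log|z||^{-1}\) near the origin. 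For such kernels and generic \(u\in L^2\setminus\Udz\), the product \(\qnl\,\grad u\) is not absolutely integrable over \(\Od\times\Od\) even for bounded \(\qnl\), and antisymmetry cannot help since absolute integrability ignores sign cancellation. This is precisely why the paper never claims the untruncated identity: it works on \(O_\epsilon\), where \(\Wd\) is bounded and Fubini is harmless, and performs the identification \(\pnl=\grad u\) on each \(O_\epsilon\) separately. Your identification step---testing against smooth antisymmetric \(\qnl\)---goes through with exactly this modification (take \(\qnl\) supported in some \(O_\epsilon\)), and your necessity argument is in any case repairable without the identity, since \(\grad u\in L^{\infty,2}\subset L^2\) already gives \(u\in\Udz\) and hence~\eqref{eq:diver} directly. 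So the strategy is sound, but the Fubini step as written does not go through; the \(O_\epsilon\) truncation is not optional.
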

\begin{proof}
    The ``only if'' part in both cases follows directly from the definition of \(\diver\) as the adjoint operator to \(\grad\); specifically see~\eqref{eq:diver} and~\eqref{eq:divx}. Therefore we only prove the ``if'' part of the claims, which we begin with the following observation.
      For each \(\qnl \in C^{0,1}(\Od\times\Od)\subset \Qd\subset\Qdx\) we have an explicit expression for \(\diver \qnl\), see~\cite[Proposition~4.1]{eb2021}, and therefore
      \[\begin{aligned}&(u, \diverx\qnl)_{L^2(\O)}=(u, \diver\qnl)_{L^2(\O)} \\&= \int_{\O}u(x)\int_{\Od}[\qnl(x',x)-\qnl(x,x')]\Wd(x-x')\dx'\dx,
      \qquad \forall \qnl \in C^{0,1}(\Od\times\Od).\end{aligned}\]
      We now extend \(u\) by zero outside of \(\O\) and integrate by parts in the equality above:
    \begin{equation*}
      \begin{aligned}
      &\int_{\O} u(x)\int_{\Od}[\qnl(x',x)-\qnl(x,x')]\Wd(x-x')\dx'\dx
      \\&=
      \lim_{\epsilon\searrow 0} \iint_{O_\epsilon} \bigg[\frac{\qnl(x',x)}{|x-x'|}u(x)|x-x'|\Wd(x-x')
      -
      \frac{\qnl(x,x')}{|x-x'|}u(x)|x-x'|\Wd(x-x')\bigg]\dx'\dx
      \\&=
      \lim_{\epsilon\searrow 0} \iint_{O_\epsilon} \bigg[\frac{\qnl(x,x')}{|x-x'|}u(x')|x-x'|\Wd(x-x')
      -
      \frac{\qnl(x,x')}{|x-x'|}u(x)|x-x'|\Wd(x-x')\bigg]\dx'\dx
      \\&=
      -\lim_{\epsilon\searrow 0} \iint_{O_\epsilon} \qnl(x,x')[u(x)-u(x')]\Wd(x-x')\dx'\dx
      \\&=-\lim_{\epsilon\searrow 0} \iint_{O_\epsilon} \qnl(x,x')\grad u(x,x')\dx'\dx.
      \end{aligned}
    \end{equation*}
    where \(O_{\epsilon} = \{\,(x,x')\in \Od\times\Od: |x-x'|>\epsilon \,\}\). Notice that we have applied Fubini's theorem. We now define a linear functional \(F_u: C^{0,1}(\Od\times\Od)\to \R\) by
    \begin{equation}\label{gradidentific}\begin{aligned}
    F_u(\qnl) &= -\lim_{\epsilon\searrow 0} \iint_{O_\epsilon} \qnl(x,x')\grad u(x,x')\dx'\dx
    =
    (u, \diver\qnl)_{L^2(\O)}=(u, \diverx\qnl)_{L^2(\O)}.\end{aligned}
    \end{equation}
  \begin{enumerate}
      \item
      Owing to our assumptions, \eqref{gradidentific} defines a bounded linear functional on a dense subspace \(C^{0,1}(\Od\times\Od)\cap L^2_a(\Od\times\Od)\) of the Hilbert space \(L^2_a(\Od\times\Od)\).
      Therefore, we use Riesz' representation theorem to conclude that there exists a function \(\pnl \in L^2_a(\Od\times\Od)\)
      such that \((\pnl,\qnl)_{L^2(\Od\times\Od)} = F_u(q)\), for all \(\qnl\in\Qd^a\).
      Also note that owing to the anti-symmetry \(\grad u\) the equality still holds for all \(\qnl\in C^{1,0}(\Od\times\Od)\) regardless of the symmetry of \(\qnl\), since anti-symmetric and symmetric functions are \(L^2(\Od\times\Od)\) are orthogonal, see~\eqref{eq:orth}.
    Appealing to~\eqref{gradidentific} once again, we can see that \(\pnl = \grad u\), almost everywhere in \(O_\epsilon\), \(\epsilon>0\), and consequently also in \(\Od\times\Od\).
    Therefore \(u\in \Udz\) as claimed.  The equality \((u, \diver\qnl)_{L^2(\O)} = -(\grad u, \qnl)_{L^2(\Od\times\Od)}\), \(\forall \qnl \in \Qd\) follows from the definition of the operator \(\diver\).
    \item
    Owing to our assumptions, \eqref{gradidentific} defines a bounded linear functional on a dense subspace \(C^{0,1}(\Od\times\Od)\) of
      \(L^{1,2}(\Od\times\Od)\supset \Qdx\).
      Since \(L^{\infty,2}(\Od\times\Od)\) is isometrically isomorphic to the dual of \(L^{1,2}(\Od\times\Od)\) with double integration as the primal-dual pairing, there is a function \(\pnl \in L^{\infty,2}(\Od\times\Od)\) such that
    \(( u, \diverx\qnl )_{L^2(\O)}= \int_{\Od} \int_{\Od} \pnl(x,x')\qnl(x,x')\dx\dx'\), for all \(\qnl\in\Qdx\).
    Appealing to~\eqref{gradidentific} once again, we can see that \(\pnl = \grad u\), almost everywhere in \(O_\epsilon\), \(\epsilon>0\), and consequently also in \(\Od\times\Od\).
  \end{enumerate}
\end{proof}

We now proceed with the discussion of the dual for~\eqref{eq:prob_nl}.   To this end, we observe that the conditions for strong Fenchel duality are fulfilled by~\eqref{eq:prob_nl}: indeed
\(\diverx \in \mathcal{B}(\Qdx^a; L^2(\O))\) is a surjective bounded linear operator even when restricted to the smaller subspace of
anti-symmetric fluxes \(\Qdx^a\) (or even \(\Qd^a\)), as established in Proposition~\ref{prop:diver0}, and both \(\|\cdot\|_{L^{1,2}(\Od\times\Od)}\) and \(\ind_{\{f\}}\) are convex and lower-semicontinuous on \(\Qdx^a\) and \(L^2(\O)\), respectively.
Whereas the convex conjugate of the second term, \(\ind^*_{\{f\}}(u)=-\ell(u)\) has been previously utilized in Section~\ref{sec:analysis_loc}, the computation of the convex conjugate of the first term is a little subtle owing to the requirement that the supremum is taken over \(\Qdx^a \subset \Qdx\), that is, over anti-symmetric fluxes only:
\begin{equation}\label{eq:conj2}\begin{aligned}
\ind_{B^*_{L^{1,2}_a(\Od\times\Od)}}(\pnl)
&= \sup_{\qnl \in \Qdx^a}[ \langle \pnl, \qnl \rangle - \|\qnl\|_{L^{1,2}(\Od\times\Od)}]
\end{aligned}\end{equation}
where we have introduced the notation \(\ind_{B^*_{L^{1,2}_a(\Od\times\Od)}}\) for this conjugate.
Indeed, because \(\|\cdot\|_{L^{1,2}(\Od\times\Od)}\) is zero at \(0\in L^{1,2}_a(\Od\times\Od)\) and is absolutely homogeneous, the notation is warranted since \(\ind_{B^*_{L^{1,2}_a(\Od\times\Od)}}\) assumes only values in the set \(\{0,+\infty\}\).
With this notation in place, let us consider the following problems:
\begin{align}\label{eq:limiting_nl_dual_asym}
  -d^*_{\delta,a}&\overset{\cdot}{=}
  \inf_{u\in L^2(\O)}[ \ind_{B^*_{L^{1,2}_a(\Od\times\Od)}}(\diverx^* u) - \ell(u)],
\intertext{and}
\label{eq:limiting_nl_dual_2}
  -d^*_{\delta,2}&\overset{\cdot}{=}
  \inf_{u\in L^2(\O)}[\ind_{|\Od|^{1/2}B_{L^{2}(\Od\times\Od)}}(\grad u) - \ell(u)],
\end{align}
where~\eqref{eq:limiting_nl_dual_asym} is the Fenchel dual of~\eqref{eq:prob_nl}
and \(\diverx^* \in \mathcal{B}(L^2(\O);(\Qdx^a)')\) is understood as the negative adjoint operator for \(\diverx\) restricted to anti-symmetric fluxes.

\begin{proposition}\label{prop:infima_ineq}
  Infima~\eqref{eq:limiting_nl_dual_asym} and~\eqref{eq:limiting_nl_dual_2} are related as follows:
  \begin{equation}\label{eq:dual_finite2}
   0\leq p^*_{\delta,a} = d^*_{\delta,a} \leq d^*_{\delta,2} < \infty.
  \end{equation}
  Both infima are attained.
\end{proposition}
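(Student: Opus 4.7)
The plan is to establish the chain \(0 \leq p^*_{\delta,a} = d^*_{\delta,a} \leq d^*_{\delta,2} < \infty\) piece by piece, and separately verify attainment of the two infima in~\eqref{eq:limiting_nl_dual_asym} and~\eqref{eq:limiting_nl_dual_2}.

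\textbf{Non-negativity and primal--dual equality.} The bound \(p^*_{\delta,a} \geq 0\) is immediate from non-negativity of \(\|\cdot\|_{L^{1,2}(\Od\times\Od)}\). For \(p^*_{\delta,a} = d^*_{\delta,a}\), together with attainment of the dual infimum, I would invoke the Fenchel--Rockafellar theorem, as was done in Theorem~\ref{thm1}. The functions \(\|\cdot\|_{L^{1,2}}\) and \(\ind_{\{f\}}\) are proper, convex, and lower semicontinuous, and \(\diverx : \Qdx^a \to L^2(\O)\) is bounded linear. The constraint qualification reduces to surjectivity of \(\diverx|_{\Qdx^a}\), which follows from Proposition~\ref{prop:diver0}.4 together with the remark in~\cite[Proposition~4.1]{eb2021}: the bounded right inverse \(\qnl_{v,\delta}\) can be taken in \(\Qds \subseteq \Qdx^a\), because the nonlocal divergence only ``sees'' the antisymmetric part of a flux. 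Rockafellar's theorem then delivers both strong duality and attainment of the dual infimum in~\eqref{eq:limiting_nl_dual_asym}.

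\textbf{The inequality \(d^*_{\delta,a} \leq d^*_{\delta,2}\).} Both problems minimise the same objective \(-\ell\) over subsets of \(L^2(\O)\), so it suffices to show that the feasible set of~\eqref{eq:limiting_nl_dual_asym} is contained in that of~\eqref{eq:limiting_nl_dual_2}. Fix \(u\) feasible in~\eqref{eq:limiting_nl_dual_asym}, which means \(|\langle \diverx^* u, \qnl\rangle| \leq \|\qnl\|_{L^{1,2}(\Od\times\Od)}\) for every \(\qnl \in \Qdx^a\). Restricting to \(\qnl \in \Qds \subseteq \Qdx^a\) and applying the elementary inequality \(\|\qnl\|_{L^{1,2}(\Od\times\Od)} \leq |\Od|^{1/2}\|\qnl\|_{L^2(\Od\times\Od)}\) (Cauchy--Schwarz in the outer \(x\)-variable), I obtain
\[ |(u, \diver \qnl)_{L^2(\O)}| \leq |\Od|^{1/2}\,\|\qnl\|_{L^2(\Od\times\Od)}, \qquad \forall \qnl \in \Qds. \]
Proposition~\ref{prop:adj}.1 then forces \(u \in \Udz\) and \((u, \diver \qnl)_{L^2(\O)} = -(\grad u, \qnl)_{L^2(\Od\times\Od)}\). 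A short density argument -- approximate an arbitrary \(\qnl \in L^2_a(\Od\times\Od)\) in \(L^2\) by antisymmetrising a sequence in \(\Qd\) produced by Proposition~\ref{prop:diver0}.1, noting that the antisymmetric part stays in \(\Qd\) because \(\diver\) annihilates symmetric fluxes -- then extends the bilinear estimate from \(\Qds\) to all of \(L^2_a(\Od\times\Od)\). Testing against \(\qnl = \grad u\), which lies in \(L^2_a\) since \(\Wd\) is radial, yields \(\|\grad u\|_{L^2(\Od\times\Od)} \leq |\Od|^{1/2}\), so \(u\) is feasible for~\eqref{eq:limiting_nl_dual_2}.

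\textbf{Finiteness of \(d^*_{\delta,2}\) and attainment.} On the feasible set of~\eqref{eq:limiting_nl_dual_2} one has \(u \in \Udz\) with \(\|\grad u\|_{L^2(\Od\times\Od)} \leq |\Od|^{1/2}\); the Poincar\'e-type inequality in Proposition~\ref{prop:grad0}.3 then controls \(\|u\|_{L^2(\O)}\) by a constant depending only on \(\delta\) and \(|\Od|\), giving the uniform bound \(|\ell(u)| \leq C \|f\|_{L^2(\O)}\) and hence \(d^*_{\delta,2} < \infty\). For attainment of the infimum in~\eqref{eq:limiting_nl_dual_2} I would use the direct method: the Hilbert structure on \(\Udz\) from Proposition~\ref{prop:grad0}.5 provides weak compactness of any minimising sequence, weak lower semicontinuity of \(\|\grad \cdot\|_{L^2(\Od\times\Od)}\) preserves the constraint in the limit, and continuity of \(\ell\) on \(L^2(\O)\) handles the objective. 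Attainment of the infimum in~\eqref{eq:limiting_nl_dual_asym} is a byproduct of the Fenchel--Rockafellar argument above.

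\textbf{Main obstacle.} The crux of the argument is the feasible-set inclusion in the second step, because it must simultaneously exploit two independent structural features: the antisymmetry of the test fluxes, used through Proposition~\ref{prop:adj}.1 and density of \(\Qds\) in \(L^2_a(\Od\times\Od)\), to upgrade a one-sided pairing estimate to genuine \(L^2\)-membership of \(\grad u\); and the continuous embedding \(\|\cdot\|_{L^{1,2}} \leq |\Od|^{1/2}\|\cdot\|_{L^2}\) on the bounded domain \(\Od\), which bridges the mismatch between the \(L^{1,2}\)-test norm naturally coupled with~\eqref{eq:prob_nl} and the \(L^2\)-constraint in~\eqref{eq:limiting_nl_dual_2}. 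Losing either ingredient breaks the argument.
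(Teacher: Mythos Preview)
Your proposal is correct and follows essentially the same approach as the paper. The only cosmetic difference is in the middle step: the paper establishes \(d^*_{\delta,a}\le d^*_{\delta,2}\) by directly showing the pointwise inequality \(\ind_{B^*_{L^{1,2}_a(\Od\times\Od)}}(\diverx^* u)\ge \ind_{|\Od|^{1/2}B_{L^2(\Od\times\Od)}}(\grad u)\) via a supremum computation over \(\Qd^a\), whereas you phrase the same argument as a feasible-set inclusion and finish by testing against \(\qnl=\grad u\); both routes rely on restricting to \(\Qd^a\), H\"older's inequality, Proposition~\ref{prop:adj}.1, and density of \(\Qd^a\) in \(L^2_a(\Od\times\Od)\).
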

\begin{proof}
The inequality \(0\leq p^*_{\delta,a}\) follows trivially from the non-negativity of the objective function of~\eqref{eq:prob_nl},
the equality \(p^*_{\delta,a} = d^*_{\delta,a}\) is the expression of the strong Fenchel duality, and
the inequality \(p^*_{\delta,a}<+\infty\) is a consequence of Theorem~\ref{thm:existence}.
Therefore, \(d^*_{\delta,a}\) is finite and the infimum is attained, see~\cite[Theorem 4.4.3]{borwein2004techniques}.

The inequality \(d^*_{\delta,2} < \infty\) follows from the fact that the infimum in~\eqref{eq:limiting_nl_dual_2} is attained.
Indeed, the objective function of~\eqref{eq:limiting_nl_dual_2} is inf-compact in the weak topology of \(L^2(\O)\), as follows from the Poincar{\'e}-type inequality, see Proposition~\ref{prop:grad0}.

It remains to establish the inequality \(d^*_{\delta,a} \leq d^*_{\delta,2}\), for which in turn it is sufficient to prove the estimate
\(\ind_{|\Od|^{1/2}B_{L^{2}(\Od\times\Od)}}(\grad u) \leq \ind_{B^*_{L^{1,2}_a(\Od\times\Od)}}(\diverx^* u)\), for an arbitrary \(u\in L^2(\O)\).
We proceed as follows:
\begin{equation*}
  \begin{aligned}
    \ind_{B^*_{L^{1,2}_a(\Od\times\Od)}}(\diverx^* u)
    & = \sup_{\qnl \in \Qdx^a}[-(\diverx\qnl,u)_{L^2(\O)} - \|\qnl\|_{L^{1,2}(\Od\times\Od)}]
    \\&\geq \sup_{\qnl \in \Qd^a}[-(\diver\qnl,u)_{L^2(\O)} - |\Od|^{1/2}\|\qnl\|_{L^{2}(\Od\times\Od)}],
  \end{aligned}
\end{equation*}
where the inequality is owing to the inclusion \(\Qd^a\subset\Qdx^a\), the fact that \(\diverx|_{\Qd} = \diver\),
and H{\"o}lder's inequality applied to \(L^{1,2}(\Od\times\Od)\)-norm.
Note that if there is a sequence \(\qnl_1,\qnl_2,\ldots \subset \Qd^a\) such that \(-(\diver\qnl_k,u)_{L^2(\O)}>k\|\qnl\|_{L^{2}(\Od\times\Od)}\), then the supremum equals \(+\infty\).
Otherwise \(|(\diver\qnl,u)_{L^2(\O)}| \leq c\|\qnl\|_{L^{2}(\Od\times\Od)}\) for some \(c>0\) and all \(\qnl \in \Qd^a\),
which in view of Proposition~\ref{prop:adj}, point 1 is equivalent with the assertion \(\grad u \in L^2(\Od\times\Od)\).
In either case, the supremum equals \(\ind_{|\Od|^{1/2}B_{L^{2}(\Od\times\Od)}}(\grad u)\), and the proof is concluded.
\end{proof}

Note that the upper bound \(p^*_{\delta,a}\leq d^*_{\delta,2}\) in~\eqref{eq:dual_finite2} combined with Poincar{\'e}-type inequality for \(\grad\), see Proposition~\ref{prop:grad0}, implies that the \(L^{1,2}(\Od\times\Od)\)-norms of optimal solutions to \eqref{eq:prob_nl} remain uniformly bounded with respect to \(\delta\searrow 0\),
for each \(f\in L^2(\O)\).

\subsection{Tikhonov regularization of~\eqref{eq:prob_nl}}\label{sec:tikhonov}

We now turn to the analysis of the Tikhonov \(L^2(\Od\times\Od)\)-regularization
of~\eqref{eq:prob_nl}.
These developments will be useful in establishing a rigorous link between~\eqref{eq:prob_nl} and~\eqref{eq:compliance_nl_mixed}.
We follow the same approach as in Section~\ref{sec:analysis_loc}.

For \(\beta > 0\) let us define a convex functional \(\Jdb:\Qd\to\R\)
by \(\Jdb(\qnl) = \|\qnl\|_{L^{1,2}(\Od\times\Od)} + \frac{\beta}{2}\|\qnl\|^2_{L^{2}(\Od\times\Od)}\).
We then consider the pair of primal
\begin{align}\label{eq:tikhonov}
  p^*_{\delta,\beta} &= \inf_{\qnl\in\Qd^a} [\Jdb(\qnl)  + \ind_{\{f\}}(\diver\qnl)]\\
 \intertext{and dual}
\label{eq:tikhonov_dual}
  -d^*_{\delta,\beta} &= \inf_{u\in L^2(\O)}[\Jdb^*(\diver^* u) - \ell(u)]
\end{align}
problems, where \(\Jdb^*: (\Qd^a)'\to \R\cup\{+\infty\}\) is the convex conjugate of \(\Jdb\):
\begin{equation}\label{eq:jdbc}
  \Jdb^*(\pnl) = \sup_{\qnl \in \Qd^a}[ \langle \pnl, \qnl \rangle - \Jdb(\qnl)],
\end{equation}
and as before \(\diver^*\in \mathcal{B}(L^2(\O), (\Qd^a)')\) is the negative adjoint of \(\diver\).

\begin{proposition}\label{prop:tikh_exist}
 The following statements hold.
  \begin{enumerate}
    \item For each \(\beta>0\) the infimum in~\eqref{eq:tikhonov} is uniquely attained.
    \item For each \(\beta>0\), the strong duality holds: \(p^*_{\delta,\beta} = d^*_{\delta,\beta}\),
    and the infimum in~\eqref{eq:tikhonov_dual} is attained.
    \item For each \(\overline{\beta}>0\), the family \(\Jdb^*(\diver^*\cdot)\), \(\beta \in [0,\overline{\beta})\) is equi-coercive on \(L^2(\O)\).
    \item
      The functionals \(\Jdb^*(\diver^* \cdot): L^2(\Od)\to \R\cup \{+\infty\}\),
      \(\Gamma\)-converge to
      \(\ind_{B^*_{L^{1,2}_a(\Od\times\Od)}}(\diverx^* \cdot): L^2(\Od)\to \R\cup \{+\infty\}\), as \(\beta\searrow 0\).
   \item
    We have the equality \(\lim_{\beta\searrow 0} d^*_{\delta,\beta} = d^*_{\delta,a}\), and
    any \(L^2(\Od)\)-limit point of solutions to~\eqref{eq:tikhonov_dual} is a solution
    to~\eqref{eq:limiting_nl_dual_asym}.
 \end{enumerate}
\end{proposition}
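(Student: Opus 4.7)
The plan is to mirror the proof of Proposition~\ref{prop:tikh_exist_loc} point by point, with the nontrivial nonlocal adaptations appearing in the equi-coercivity estimate of Point~3 and in identifying the \(\beta\searrow 0\) pointwise limit of \(\Jdb^*(\diver^*\cdot)\) in Point~4. For Point~1, the objective \(\Jdb\) is strictly convex on the Hilbert space \(\Qd^a\) due to the \((\beta/2)\|\cdot\|_{L^2(\Od\times\Od)}^2\) term, lower semicontinuous, and coercive in \(L^2(\Od\times\Od)\); the affine constraint set \(\Qd^a(f)\) is non-empty by Proposition~\ref{prop:diver0}.4 combined with the fact that antisymmetrization preserves divergence (since \(\diver\qnl_s=0\) for any symmetric \(\qnl_s\), as \(\grad v\) is antisymmetric). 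The direct method then yields existence and uniqueness. For Point~2, both \(\Jdb\) and \(\ind_{\{f\}}\) are proper convex lower semicontinuous, and \(\diver\in\mathcal{B}(\Qd^a, L^2(\O))\) is bounded and surjective by the same Proposition~\ref{prop:diver0}.4; strong Fenchel duality~\cite[Theorem~4.4.3]{borwein2004techniques} then applies, and \(d^*_{\delta,\beta}\) is finite since \(u=0\) is feasible.

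For Point~3 I will use a direct estimate that bypasses the explicit soft-thresholding computation of the local case. Given \(u\in L^2(\O)\), pick \(\tilde{\qnl}\in\Qd\) as in Proposition~\ref{prop:diver0}.4 with \(\diver\tilde{\qnl}=u\) and \(\|\tilde{\qnl}\|_{\Qd}\leq C\|u\|_{L^2(\O)}\), and replace it by its antisymmetric part to land in \(\Qd^a\) without changing the divergence or increasing either its \(L^2\)- or \(L^{1,2}\)-norm. For any \(\lambda>0\), testing the supremum defining \(\Jdb^*(\diver^* u)\) against \(-\lambda\tilde{\qnl}\) and bounding \(\|\tilde{\qnl}\|_{L^{1,2}(\Od\times\Od)}\leq|\Od|^{1/2}C\|u\|_{L^2(\O)}\) via H\"older's inequality yields
\[
\Jdb^*(\diver^* u) \geq \lambda\|u\|_{L^2(\O)}^2 - \lambda C|\Od|^{1/2}\|u\|_{L^2(\O)} - \tfrac{\beta}{2}\lambda^2 C^2\|u\|_{L^2(\O)}^2.
\]
Choosing \(\lambda = 1/(\overline{\beta}C^2)\) makes the coefficient of \(\|u\|_{L^2(\O)}^2\) equal to at least \(1/(2\overline{\beta}C^2)>0\) uniformly in \(\beta\in(0,\overline{\beta}]\), which gives the desired equi-coercivity on \(L^2(\O)\).

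For Point~4, since \(\Jdb\) is monotonically non-decreasing in \(\beta\), order reversal of Fenchel conjugation yields that \(\Jdb^*(\diver^*\cdot)\) is monotonically non-increasing in \(\beta\); each is lower semicontinuous. Invoking~\cite[Remark~2.12]{braides2006handbook}, the \(\Gamma\)-limit coincides with the lower semicontinuous envelope of the pointwise limit. The pointwise limit \(\sup_{\qnl\in\Qd^a}[-(\diver\qnl,u)_{L^2(\O)}-\|\qnl\|_{L^{1,2}(\Od\times\Od)}]\) will then be identified with \(\ind_{B^*_{L^{1,2}_a(\Od\times\Od)}}(\diverx^* u)\) via graph-norm density of \(\Qd^a\) in \(\Qdx^a\), which follows from \(\diverx\) being the closure of \(\diver\) (Proposition~\ref{prop:closable}) together with antisymmetrization; the limit is already lower semicontinuous as a supremum of continuous affine functionals of \(u\in L^2(\O)\). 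Point~5 then follows by combining Points~3--4 with continuity of the perturbation \(-\ell\) on \(L^2(\O)\) and the fundamental theorem of \(\Gamma\)-convergence~\cite[Theorem~2.10]{braides2006handbook}. The main obstacle I expect is the graph-norm density argument in Point~4: while it ultimately rests on Proposition~\ref{prop:closable}, care must be taken to preserve antisymmetry and to simultaneously control both the \(L^{1,2}\)-norm and the duality pairing when approximating an arbitrary element of \(\Qdx^a\) by elements of \(\Qd^a\).
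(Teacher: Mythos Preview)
Your Points~1, 2, and 5 are essentially the paper's argument. Your Point~3 is correct and in fact more direct than the paper's route: the paper first bounds \(\Jdb\) from above by a purely \(L^2\)-based function, then invokes Proposition~\ref{prop:adj} to recognize when \(u\in\Udz\), computes the conjugate explicitly via soft thresholding in the \(L^2\)-norm, and finally applies the nonlocal Poincar\'e inequality of Proposition~\ref{prop:grad0}. Your single test against a right-inverse flux from Proposition~\ref{prop:diver0}.4 bypasses all of that. One caveat: your parenthetical claim that antisymmetrization does not increase the \(L^{1,2}\)-norm is not obvious (see below), but it is not actually needed in Point~3 since the estimate you use goes through the \(L^2\)-bound on \(\tilde{\qnl}\) via H\"older; moreover, the flux produced in Proposition~\ref{prop:diver0}.4 is already antisymmetric (as noted after Proposition~\ref{prop:closable}).

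The genuine gap is exactly where you flag it, in Point~4. The step ``closure plus antisymmetrization'' does not go through as stated: antisymmetrization is \emph{not} a bounded map on \(L^{1,2}(\Od\times\Od)\), because swapping the two arguments carries \(L^{1,2}\) to \(L^{2,1}\), and these mixed-exponent spaces do not coincide. Thus if \(\qnl_n\in\Qd\) approximates \(\qnl\in\Qdx^a\) in the \(\Qdx\)-graph norm, there is no reason for \((\qnl_n)_a\) to converge to \(\qnl\) in \(L^{1,2}\). The paper closes this gap by explicit mollification in both variables: given \(\qnl_N\in\Qdx^a\) witnessing \(\ind_{B^*_{L^{1,2}_a(\Od\times\Od)}}(\diverx^* u)=+\infty\), one sets \(\qnl_{N,\epsilon}(x,x')=\iint\phi_\epsilon(x-y)\phi_\epsilon(x'-y')\qnl_N(y,y')\,\mathrm{d}y\,\mathrm{d}y'\). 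This is smooth and antisymmetric by construction, hence in \(\Qd^a\); it converges to \(\qnl_N\) in \(L^{1,2}\) (standard Bochner-space mollification in \(L^1(\Od;L^2(\Od))\)); and the divergence pairing converges because \(\grad v\in L^{\infty,2}(\Od\times\Od)\) for \(v\in C^\infty_c(\O)\), so that \((v,\diver\qnl_{N,\epsilon})\to(v,\diverx\qnl_N)\), which then extends to all \(v\in L^2(\O)\) by continuity. With this density established, your pointwise-limit identification and the remainder of Point~4 go through.
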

\begin{proof}
  \begin{enumerate}
    \item
    The problem~\eqref{eq:tikhonov} amounts to minimizing a convex, lower semicontinuous, and coercive functional \(\Jdb(\cdot)  + \ind_{\{f\}}(\diver\cdot)\) over the Hilbert space \(\Qd^a\).
    The solution exists owing to the generalized Weierstrass' theorem, see for example~\cite[Corollary~3.23]{brezis2010functional}.
    %
   \item
    All conditions for strong duality are fulfilled, with the least trivial one
    being the surjectivity of \(\diver\) on the space of anti-symmetric functions, which is established in Proposition~\ref{prop:diver0}.
  \item
    Let us fix an arbitrary \(\overline{\beta}>0\).
    We begin by stating the upper estimate \(\Jdb(\qnl) \leq |\Od|^{1/2}\|\qnl\|_{L^2(\Od\times\Od)} + \frac{\overline{\beta}}{2}\|\qnl\|^2_{L^2(\Od\times\Od)}\),
    which holds for each \(\qnl \in \Qd\) and \(\beta \in [0,\overline{\beta})\).
    Arguing as in Proposition~\ref{prop:infima_ineq}, we have the inequality
    \begin{equation*}
      \Jdb^*(\diver^* u) \geq
      \sup_{\qnl\in \Qd^a}\left[-( u,\diver\qnl)_{L^2(\O)}-|\Od|^{1/2}\|\qnl\|_{L^2(\Od\times\Od)} - \frac{\overline{\beta}}{2}\|\qnl\|^2_{L^2(\Od\times\Od)}\right].
    \end{equation*}
    If there is a sequence \(\qnl_1,\qnl_2,\ldots\subset\Qd^a\) such that \(-( u,\diver\qnl_k)_{L^2(\O)} > k \|\qnl_k\|_{L^2(\Od\times\Od)}\),
    the supremum equals \(+\infty\), as is easily seen by evaluating the terms along the renormalized sequence \(\|\qnl_k\|_{L^2(\Od\times\Od)}^{-1}\qnl_k\).
    Otherwise there is a constant \(c>0\) such that \(|( u,\diver\qnl)_{L^2(\O)}| \leq c\|\qnl\|_{L^2(\Od\times\Od)}\), for each \(\qnl\in\Qd^a\),
    which owing to Proposition~\ref{prop:adj}, point 1, is equivalent with the assertion \(u\in\Udz\).
    In this case \(-( u,\diver\qnl)_{L^2(\O)}=(\grad u,\qnl)_{L^2(\Od\times\Od)}\),
    and therefore we continue the estimate
    \begin{equation*}\begin{aligned}
      \Jdb^*(\diver^* u) &\geq
      \sup_{\qnl\in \Qd^a}\left[(\grad u,\qnl)_{L^2(\Od\times\Od)}-|\Od|^{1/2}\|\qnl\|_{L^2(\Od\times\Od)} - \frac{\overline{\beta}}{2}\|\qnl\|^2_{L^2(\Od\times\Od)}\right]
      \\&=
      \sup_{\qnl\in L^2_a(\Od\times\Od)}\left[(\grad u,\qnl)_{L^2(\Od\times\Od)}-|\Od|^{1/2}\|\qnl\|_{L^2(\Od\times\Od)} - \frac{\overline{\beta}}{2}\|\qnl\|^2_{L^2(\Od\times\Od)}\right]
      \\&=
      -\inf_{\qnl\in L^2_a(\Od\times\Od)}\left[|\Od|^{1/2}\|\qnl\|_{L^2(\Od\times\Od)} + \frac{\overline{\beta}}{2}\|\qnl-\overline{\beta}^{-1}\grad u\|^2_{L^2(\Od\times\Od)}\right] + \frac{1}{2\overline{\beta}}\|\grad u\|^2_{L^2(\Od\times\Od)},
    \end{aligned}\end{equation*}
    where the first equality is owing to the density of \(\Qd^a\) in \(L^2_a(\Od\times\Od)\).
    The infimum is attained at \(\hat{\qnl} = \overline{\beta}^{-1}\max\{1-|\Od|^{1/2}\|\grad u\|^{-1}_{L^2(\Od\times\Od)},0\}\grad u\).
    The two cases can be combined in one lower estimate:
    \begin{equation*}
      \Jdb^*(\diver^* u) \geq
      \frac{1}{2\overline{\beta}}
      [\max\{\|\grad u\|_{L^2(\Od\times\Od)}-|\Od|^{1/2},0\}]^2.
    \end{equation*}
    Together with Poincar{\'e}-type inequality for \(\grad\), see Proposition~\ref{prop:grad0}, this implies the claimed equi-coercivity.
 \item
The family of functionals \(\Jdb(\cdot)\) is monotonically non-decreasing with respect to \(\beta\), and therefore owing to the order-reversing property of convex conjugates the family of functionals \(\Jdb^*(\cdot)\) is monotonically non-increasing with respect to \(\beta\). Furthermore, convex conjugates are lower semicontinuous, and \(\diver^*\) is an adjoint of a linear bounded operator, consequently the composite functionals \(\Jdb^*(\diver^*\cdot)\) are lower semicontinuous. Therefore, in view of~\cite[Remark 2.12]{braides2006handbook} the \(\Gamma\)-limit of \(\Jdb^*(\diver^*\cdot)\) is given by the pointwise limit of these functionals.

The pointwise limit is computed as follows.
  Using the definition of \(\Jdb\), in particular the facts that \(\Jdb(0)=0\) and \(\Jdb(\qnl)\geq \|\qnl\|_{L^{1,2}(\Od\times\Od)}\),
  \(\forall \beta >0\) and \(\qnl\in L^{2}(\Od\times\Od)\),  we have the following estimate:
  \[\begin{aligned}
  0 \leq \Jdb^*(\diver^* u) &= \sup_{\qnl\in\Qd^a}[ -(u,\diver\qnl)_{L^2(\O)} - \Jdb(\qnl)]
  \\&\leq \sup_{\qnl\in\Qdx^a}[ -(u,\diverx\qnl)_{L^2(\O)} - \|\qnl\|_{L^{1,2}(\Od\times\Od)}]
  \\&=\ind_{B_{L^{1,2}_a(\Od\times\Od)}^*}(\diverx^* u).
  \end{aligned}\]
  We will now show that for each \(u\in L^2(\O)\) with \(\diverx^* u \not\in B_{L^{1,2}_a(\Od\times\Od)}^*\) we have
  the equality \(\lim_{\beta\searrow 0} \Jdb^*(\diver^* u) =+\infty\).
  Let us fix such a \(u\in L^2(\O)\). Per  (the implicit) definition of \(B_{L^{1,2}_a(\Od\times\Od)}^*\), for each \(N\in\N\) there is \(\qnl_N \in \Qdx^a\) such that
  \(\langle \diverx^* u,\qnl_N\rangle - \|\qnl_N\|_{L^{1,2}(\Od\times\Od)}=-(u,\diverx\qnl_N)_{L^2(\O)} - \|\qnl_N\|_{L^{1,2}(\Od\times\Od)}\geq N\).

  Let us take an arbitrary \(\phi \in C_c^\infty(B_{\Rn})\), \(\phi\geq 0\), \(\int_{\Rn}\phi(x)\dx=1\) and construct a family of mollifiers \(\phi_\epsilon=\epsilon^{-n}\phi(\epsilon^{-1}x)\).
  We extend \(\qnl_N\) by \(0\) outside of \(\Od\times\Od\), and put
  \[
  \qnl_{N,\epsilon}(x,x') = \int_{\Rn}\int_{\Rn}\phi_\epsilon(x-y)\phi_\epsilon(x'-y')\qnl_N(y,y')\,\mathrm{d}y\,\mathrm{d}y'.
  \]
  Then \(\qnl_{N,\epsilon}\in C^\infty_c(\Rn\times\Rn)\), is anti-symmetric, and satisfies the equality
  \(\lim_{\epsilon\searrow 0} \|\qnl_{N,\epsilon}-\qnl_N\|_{L^{1,2}(\Od\times\Od)}=0.\)
  Additionally, for each \(v \in C^\infty_c(\O)\) we have \(\grad v \in L^{\infty,2}(\Od\times\Od)\), and consequently
  \[
  \begin{aligned}
  \lim_{\epsilon\searrow 0} (v,\diverx\qnl_{N,\epsilon})_{L^2(\O)}&=
  \lim_{\epsilon\searrow 0} (v,\diver\qnl_{N,\epsilon})_{L^2(\O)} =
  -\lim_{\epsilon\searrow 0} \int_{\Od}\int_{\Od}\qnl_{N,\epsilon}(x,x')\grad v(x,x')\dx\dx'
  \\&=-\int_{\Od}\int_{\Od}\qnl_{N}(x,x')\grad v(x,x')\dx\dx' = (v,\diverx\qnl_N).
  \end{aligned}
  \]
  Since both the leftmost and the rightmost hand sides are continuous in \(L^2(\O)\) with respect to \(v\), we conclude that their equality holds for each \(v\in L^2(\O)\).

  We now choose \(\epsilon_N\) so that the inequalities \(\|\qnl_{N,\epsilon}-\qnl_N\|_{L^{1,2}(\Od\times\Od)}<\frac{1}{3}\) and \(|(u,\diverx\qnl_{N,\epsilon_N}-\diverx\qnl_N)_{L^2(\O)}| <\frac{1}{3}\) hold.
  We then choose \(\beta_{N}\) so small that \(\frac{\beta_{N}}{2}\|\qnl_{N,\epsilon_N}\|^2_{L^{2}(\Od\times\Od)}<\frac{1}{3}\).
  With these choices, for all \(\beta < \beta_{N}\) we have a lower estimate:
  \begin{equation*}\begin{aligned}
  \Jdb^*(\diver^* u)&\geq
  -(u, \diver\qnl_{N,\epsilon_N})
  - \|\qnl_{N,\epsilon_N}\|_{L^{1,2}(\Od\times\Od)}
  -\frac{\beta}{2}\|\qnl_{N,\epsilon_N}\|^2_{L^{2}(\Od\times\Od)}
  > N-1,
  \end{aligned}\end{equation*}
  thereby concluding the proof.
  \item
  Note that \(\Jdb^*(\diver^*\cdot)-\ell\) is still equi-coercive on \(L^2(\O)\) and \(\Gamma\)-converges to \(\ind_{B^*_{L^{1,2}_a(\Od\times\Od)}}(\diverx^*\cdot) - \ell\), since \(\Gamma\)-convergence is stable with respect to continuous perturbations, see for example~\cite[Remark 2.2]{braides2006handbook}.
   Therefore, in view of point 4, it remains to appeal to the fundamental theorem of \(\Gamma\)-convergence, see for example~\cite[Theorem 2.10]{braides2006handbook}.
  \end{enumerate}
\end{proof}

The convergence of the two-point optimal fluxes follows from these auxiliary results.
\begin{theorem}\label{thm:tikhonov}
  Let \(\hat{\qnl}_{\beta} \in \Qd^a(f)\) be the family of solutions to~\eqref{eq:tikhonov},
  \(\beta \in (0,\overline{\beta}]\).
  The following statements hold:
  \begin{enumerate}
    \item
    \(p^*_{\delta,a}=\lim_{\beta\searrow 0} p^*_{\delta,\beta}\).
    In fact, \(p^*_{\delta,a}=\lim_{\beta\searrow 0} \|\hat{\qnl}_{\beta}\|_{L^{1,2}(\Od\times\Od)}\),
    and \(\lim_{\beta\searrow 0} (\beta/2)\|\hat{\qnl}_{\beta}\|^2_{L^{2}(\Od\times\Od)}=0\).
    \item The family \(\hat{\qnl}_{\beta}\)
    is relatively sequentially compact with respect to the biting convergence in \(L^{1}(\Od;L^2(\Od))\).
    Each limit point of this sequence is a solution to~\eqref{eq:prob_nl}.
  \end{enumerate}
\end{theorem}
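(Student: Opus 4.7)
The plan is to mirror the argument from Proposition~\ref{thm:tikhonov_local} for the local problem, replacing weak-$*$ compactness of Radon measures with the biting compactness of antisymmetric two-point fluxes established in Proposition~\ref{prop:compact}, and weak convergence of the local divergence with the weak-$*$ convergence of the nonlocal divergence guaranteed by Proposition~\ref{prop:divcont}. The feasibility of each $\hat{\qnl}_\beta$ in~\eqref{eq:prob_nl} provides the upper estimate, while the strong duality and $\Gamma$-convergence machinery developed in Proposition~\ref{prop:tikh_exist} provides the matching lower estimate through the dual.

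For point 1, I would chain the inequalities in the style of~\eqref{eq:prop90}. Since $\hat{\qnl}_\beta \in \Qd^a(f) \subseteq \Qdx^a(f)$, we have
\begin{equation*}
p^*_{\delta,a} \leq \liminf_{\beta\searrow 0} \|\hat{\qnl}_\beta\|_{L^{1,2}(\Od\times\Od)}
\leq \limsup_{\beta\searrow 0} \|\hat{\qnl}_\beta\|_{L^{1,2}(\Od\times\Od)}
\leq \lim_{\beta\searrow 0}\Jdb(\hat{\qnl}_\beta) = \lim_{\beta\searrow 0} p^*_{\delta,\beta}.
\end{equation*}
By Proposition~\ref{prop:tikh_exist}, points 2 and 5, this last limit equals $\lim_{\beta\searrow 0} d^*_{\delta,\beta} = d^*_{\delta,a}$, which in turn equals $p^*_{\delta,a}$ by strong duality established in Proposition~\ref{prop:infima_ineq}. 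Hence equality holds throughout, which forces both $\lim_{\beta\searrow 0}\|\hat{\qnl}_\beta\|_{L^{1,2}(\Od\times\Od)} = p^*_{\delta,a}$ and $\lim_{\beta\searrow 0} (\beta/2)\|\hat{\qnl}_\beta\|^2_{L^2(\Od\times\Od)} = 0$.

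For point 2, the chain above shows that the $L^{1,2}(\Od\times\Od)$-norms of $\hat{\qnl}_\beta$ remain bounded as $\beta \searrow 0$. Since each $\hat{\qnl}_\beta$ is antisymmetric, Proposition~\ref{prop:compact} yields a subsequence $\beta_k \searrow 0$ and a limit $\hat{\qnl} \in L^{1,2}_a(\Od\times\Od)$ such that $\hat{\qnl}_{\beta_k} \bite \hat{\qnl}$ (with weak convergence in $L^1(\Od\times\Od)$ as a bonus). The lower semicontinuity computation at the end of that proof also gives $\|\hat{\qnl}\|_{L^{1,2}(\Od\times\Od)} \leq \liminf_k \|\hat{\qnl}_{\beta_k}\|_{L^{1,2}(\Od\times\Od)} = p^*_{\delta,a}$. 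Feasibility of the limit is then verified via Proposition~\ref{prop:divcont}: applying $\diverx$ to the biting-convergent subsequence yields $f \wsto \diverx\hat{\qnl}$ in $[W^{1,\infty}_0(\O)]'$, so $\diverx\hat{\qnl} = f$ as an element of $[W^{1,\infty}_0(\O)]'$; since $f \in L^2(\O)$ and $W^{1,\infty}_0(\O)$ is dense in $L^2(\O)$, this identifies $\diverx\hat{\qnl} = f$ in $L^2(\O)$, so $\hat{\qnl} \in \Qdx^a(f)$. Combining feasibility with the norm estimate gives $\|\hat{\qnl}\|_{L^{1,2}(\Od\times\Od)} = p^*_{\delta,a}$, i.e.\ $\hat{\qnl}$ solves~\eqref{eq:prob_nl}.

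The delicate step I expect is the passage from weak-$*$ convergence of $\diverx\hat{\qnl}_{\beta_k}$ in $[W^{1,\infty}_0(\O)]'$ back to an identity in $L^2(\O)$; one must be careful not to confuse the two notions of divergence, since the regularized fluxes live in the smaller space $\Qd^a$ while the candidate limit only belongs to the larger space $\Qdx^a$. The point is that for test functions in the dense subspace $W^{1,\infty}_0(\O)$ the two pairings coincide by construction of $\diverx$ in Proposition~\ref{prop:closable}, and the $L^2(\O)$-representative $f$ is uniquely determined by these pairings. Everything else is a direct transcription of the local argument, using the tools developed earlier in this section.
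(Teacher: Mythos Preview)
Your proposal is correct and follows essentially the same route as the paper's own proof: the chain of inequalities in point~1 and the appeal to Propositions~\ref{prop:tikh_exist}, \ref{prop:infima_ineq}, \ref{prop:compact}, and~\ref{prop:divcont} in point~2 match the paper line for line. Your extra paragraph spelling out why weak\(^*\) convergence of \(\diverx\hat{\qnl}_{\beta_k}\) in \([W^{1,\infty}_0(\O)]'\) suffices to place \(\hat{\qnl}\) in \(\Qdx^a(f)\) is a welcome elaboration of a step the paper leaves implicit.
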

\begin{proof}
  \begin{enumerate}
    \item
    As each \(\hat{\qnl}_{\beta}\) is feasible in~\eqref{eq:prob_nl}, we have the estimate
    \begin{equation}\label{eq:thm60}\begin{aligned}
      p^*_{\delta,a} &\leq \liminf_{\beta\searrow 0} \|\hat{\qnl}_{\beta}\|_{L^{1,2}(\Od\times\Od)}
      \leq \limsup_{\beta\searrow 0} \|\hat{\qnl}_{\beta}\|_{L^{1,2}(\Od\times\Od)}
      \\&\leq
      \lim_{\beta\searrow 0} \left[\|\hat{\qnl}_{\beta}\|_{L^{1,2}(\Od\times\Od)}+\frac{\beta}{2}\|\hat{\qnl}_{\beta}\|^2_{L^{2}(\Od\times\Od)}\right]
      = \lim_{\beta\searrow 0}p^*_{\delta,\beta}
      = p^*_{\delta,a},
    \end{aligned}\end{equation}
    where the last equality follows from Proposition~\ref{prop:tikh_exist}, points 2 and 5, and Proposition~\ref{prop:infima_ineq}.
    Therefore, the equalities must hold throughout, and the claim follows.
    \item
    Note that~\eqref{eq:thm60} implies that the norms \(\|\hat{\qnl}_{\beta}\|_{L^{1,2}(\Od\times\Od)}\) remain bounded as \(\beta\searrow 0\).
    Consequently, Proposition~\ref{prop:compact} provides us with the claimed sequential compactness of this family of minimizers.
    Let \(\hat{\qnl} \in L^{1,2}_a(\Od\times\Od)\) be an arbitrary limit point of \(\hat{\qnl}_{\beta}\)
    along some sequence \(\beta_{k}\searrow 0\).
    We have the estimates
    \[
    p^*_{\delta,a} \leq \|\hat{\qnl}\|_{L^{1,2}(\Od\times\Od)}
    \leq \liminf_{\beta_k\searrow 0} \|\hat{\qnl}_{\beta_k}\|_{L^{1,2}(\Od\times\Od)}
    =p^*_{\delta,a},
    \]
    where the first inequality is implied by Proposition~\ref{prop:divcont} asserting that \(\hat{\qnl} \in \Qdx^a(f)\), the second by Proposition~\ref{prop:compact}, and the equality is provided by point 1.
    This is therefore sufficient to conclude that \(\hat{\qnl}\) is an optimal solution of~\eqref{eq:prob_nl}.
  \end{enumerate}
\end{proof}

\subsection{Relationship between~\eqref{eq:prob_nl} and~\eqref{eq:compliance_nl_mixed}}\label{sec:gammaconv1}

In this section we consider the relationship between~\eqref{eq:prob_nl} and~\eqref{eq:compliance_nl_mixed}
(or equivalently, \eqref{eq:igamma_nl}).
We remind the reader that in order to do this we assume that  \(\underline{\kappa}=0\).%
We have the following result.

\begin{theorem}\label{thm:vanish_limit}
  The family of optimal solutions \(\qnl_\gamma \in \Qdx^a(f)\),
  \(\gamma\searrow 0\), to~\eqref{eq:igamma_nl}
  is relatively sequentially compact with respect to the biting convergence in  \(L^{1}(\Od;L^2(\Od))\).
  Each limit point of this sequence is an optimal solution to~\eqref{eq:prob_nl}.
\end{theorem}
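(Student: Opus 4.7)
The plan is to mirror the proof of Theorem~\ref{thm:vanish_limit_local}, substituting the nonlocal Tikhonov regularization machinery from Subsection~\ref{sec:tikhonov} for its local counterpart. The skeleton is the sandwich
\[
\frac{1}{2|\Od|}[p^*_{\delta,a}]^2 \leq \frac{1}{2|\Od|}\|\qnl_\gamma\|^2_{L^{1,2}(\Od\times\Od)} = \id^0(\qnl_\gamma) \leq \hat{p}^*_{\delta,\gamma} \leq \id^0(\hat{\qnl}_{\gamma\overline{\kappa}^{-1}}) + \frac{\gamma\overline{\kappa}^{-1}}{2}\|\hat{\qnl}_{\gamma\overline{\kappa}^{-1}}\|^2_{L^2(\Od\times\Od)},
\]
whose outer terms will both tend to $(2|\Od|)^{-1}[p^*_{\delta,a}]^2$ as $\gamma\searrow 0$, where $\hat{\qnl}_\beta$ denotes the Tikhonov minimizer from~\eqref{eq:tikhonov}.

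The lower bound follows from three ingredients: the inclusion $\kadmd^\gamma \subseteq \kadmd^0$ (so that $\id^0 \leq \id^\gamma$ pointwise), the explicit identity $\id^0(\qnl) = (2|\Od|)^{-1}\|\qnl\|^2_{L^{1,2}(\Od\times\Od)}$ from Proposition~\ref{prop:id0nl}, and the feasibility of $\qnl_\gamma \in \Qd^a(f) \subseteq \Qdx^a(f)$ in~\eqref{eq:prob_nl}. For the upper bound I would exploit that the harmonic averaging~\eqref{eq:harm} is affine in the resistivities: given any $\kappa_0 \in \kadmd^0$, the shifted conductivity $\kappa_\gamma$ defined by $\kappa_\gamma^{-1}(x) = \gamma\overline{\kappa}^{-1} + \kappa_0^{-1}(x)$ belongs to $\kadmd^\gamma$ (since $\kappa_\gamma \leq \min\{\kappa_0, \overline{\kappa}/\gamma\}$ pointwise), and the associated two-point resistivity satisfies $\knl_\gamma^{-1}(x,x') = \gamma\overline{\kappa}^{-1} + \knl_0^{-1}(x,x')$, so that $\Id(\knl_\gamma;\qnl) = \Id(\knl_0;\qnl) + (\gamma\overline{\kappa}^{-1}/2)\|\qnl\|^2_{L^2(\Od\times\Od)}$. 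Evaluating the double infimum in~\eqref{eq:igamma_nl} at $(\kappa_\gamma, \hat{\qnl}_{\gamma\overline{\kappa}^{-1}})$ with $\hat{\qnl}_{\gamma\overline{\kappa}^{-1}} \in \Qd^a(f)$, and then minimizing over $\kappa_0 \in \kadmd^0$, delivers the rightmost inequality.

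Theorem~\ref{thm:tikhonov}, point 1, then ensures the rightmost expression converges to $(2|\Od|)^{-1}[p^*_{\delta,a}]^2$ as $\gamma\searrow 0$, collapsing the sandwich to equalities in the limit and yielding $\|\qnl_\gamma\|_{L^{1,2}(\Od\times\Od)} \to p^*_{\delta,a}$. This uniform bound activates Proposition~\ref{prop:compact} to extract a biting-convergent subsequence with limit $\hat{\qnl} \in L^{1,2}_a(\Od\times\Od)$; Proposition~\ref{prop:divcont} upgrades this into weak$^*$ convergence of the divergences in $[W^{1,\infty}_0(\O)]'$, so that $\diverx \hat{\qnl} = f$; and the biting lower semicontinuity of $\|\cdot\|_{L^{1,2}(\Od\times\Od)}$, also furnished by Proposition~\ref{prop:compact}, combined with the feasibility of $\hat{\qnl}$ in~\eqref{eq:prob_nl}, forces $\|\hat{\qnl}\|_{L^{1,2}(\Od\times\Od)} = p^*_{\delta,a}$, establishing optimality. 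I expect no serious obstacle, since the substantive work -- biting compactness, weak$^*$-continuity of $\diverx$, and convergence of the Tikhonov minimizers -- has already been absorbed into Propositions~\ref{prop:compact} and~\ref{prop:divcont} and Theorem~\ref{thm:tikhonov}; the only point in the write-up that needs a moment of care is verifying $\kadmd^\gamma$-admissibility of the harmonic-mean construction, and that is immediate from $\kappa_\gamma \leq \min\{\kappa_0, \overline{\kappa}/\gamma\}$.
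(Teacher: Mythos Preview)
Your proposal is correct and follows essentially the same approach as the paper's proof: the same sandwich inequality built from the inclusion \(\kadmd^\gamma\subseteq\kadmd^0\) for the lower bound and the resistivity shift \(\gamma\overline{\kappa}^{-1}+[\kadmd^0]^{-1}\subseteq[\kadmd^\gamma]^{-1}\) for the upper bound, closed off via Theorem~\ref{thm:tikhonov} and Propositions~\ref{prop:compact}--\ref{prop:divcont}. Your explicit remark that the harmonic averaging~\eqref{eq:harm} is what makes the two-point resistivity shift affine is a helpful clarification that the paper leaves implicit (it works directly with the one-point form from~\eqref{eq:gamma_mixed}).
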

\begin{proof}
  We will prove the following equality:
  \begin{equation}\label{thm70}
    \lim_{\gamma\searrow 0} \hat{p}^*_{\delta,\gamma}
    = \frac{1}{2|\Od|}[p^*_{\delta,a}]^2,
  \end{equation}
  where expression in the right hand side stems from the relation between \(i^0_\delta(\cdot)\) and \(\|\cdot\|_{L^{1,2}(\Od\times\Od)}\) established in Proposition~\ref{prop:id0nl} and the discussion immediately thereafter.

  For each \(\gamma > 0\) we have the inclusion \(\kadmd^\gamma\subseteq \kadmd^0\), which in turn implies the inequality \(i^0_\delta(\qnl)\leq  i^\gamma_\delta(\qnl)\),
  \(\forall \qnl \in \Qd\).
  Therefore, in view of Proposition~\ref{prop:id0nl}, and noting that each \(\qnl_\gamma\) is feasible for~\eqref{eq:prob_nl}, we have the following inequality
  \begin{equation}\label{thm71}
  \frac{1}{2|\Od|}[p^*_{\delta,a}]^2 \leq
  \frac{1}{2|\Od|}\|\qnl_\gamma\|_{L^{1,2}(\Od\times\Od)}^2
  = i^0_\delta(\qnl_\gamma) \leq i^\gamma_\delta(\qnl_\gamma)
  =\hat{p}^*_{\delta,\gamma}.
\end{equation}
We will now estimate \(\hat{p}^*_{\delta,\gamma}\) from above using Tikhonov regularization problems discussed in the previous section.
Let us put \([\kadmd^\gamma]^{-1} = \{\, \kappa^{-1} \mid \kappa \in \kadmd^\gamma\,\}\),
\(\gamma \geq 0\).
Then \(\gamma\overline{\kappa}^{-1} + [\kadmd^0]^{-1} \subseteq [\kadmd^\gamma]^{-1}\), \(\forall \gamma\geq 0\).
Therefore,
\begin{equation}\label{thm72}
\begin{aligned}
  \hat{p}^*_{\delta,\gamma}
  &= \inf_{\qnl\in\Qdx^a(f)}
  \inf_{\kappa \in \kadmd^\gamma} \frac{1}{2}\int_{\Od} \kappa^{-1}(x)\int_{\Od} |\qnl(x,x')|^2\dx'\dx
  \\&\leq \inf_{\qnl\in\Qdx^a(f)}
  \inf_{\kappa \in \kadmd^0} \frac{1}{2}\int_{\Od} [\gamma\overline{\kappa}^{-1} + \kappa^{-1}(x)]\int_{\Od} |\qnl(x,x')|^2\dx'\dx
  \\&= \inf_{\qnl\in\Qdx^a(f)}\bigg[\id^0(\qnl) + \frac{\gamma\overline{\kappa}^{-1}}{2}\|\qnl\|_{L^2(\Od\times\Od)}^2\bigg]
  \\&\leq
  \id^0(\hat{\qnl}_{\delta,\gamma\overline{\kappa}^{-1}}) + \frac{\gamma\overline{\kappa}^{-1}}{2}\|\hat{\qnl}_{\delta,\gamma\overline{\kappa}^{-1}}\|_{L^2(\Od\times\Od)}^2
  \\&=
  \frac{1}{2|\Od|}\|\hat{\qnl}_{\delta,\gamma\overline{\kappa}^{-1}}\|_{L^{1,2}(\Od\times\Od)}^2 + \frac{\gamma\overline{\kappa}^{-1}}{2}\|\hat{\qnl}_{\delta,\gamma\overline{\kappa}^{-1}}\|_{L^2(\Od\times\Od)}^2,
\end{aligned}
\end{equation}
where \(\hat{\qnl}_{\delta,\gamma\overline{\kappa}^{-1}} \in \Qds(f)\) is the unique solution to~\eqref{eq:tikhonov}
for \(\beta=\gamma\overline{\kappa}^{-1}>0\).
It remains to let \(\gamma\searrow 0\) and utilize Theorem~\ref{thm:tikhonov} point 1, to arrive at~\eqref{thm70} from~\eqref{thm71} and~\eqref{thm72}.

To conclude the proof it is sufficient to argue along the lines of Theorem~\ref{thm:tikhonov} point 2.
Indeed, \eqref{thm71} and~\eqref{thm72} imply that \(\lim_{\gamma\searrow 0} i^0_\delta(\qnl_\gamma)=
\lim_{\gamma\searrow 0} (2|\Od|)^{-1} \|\qnl_\gamma\|_{L^{1,2}(\Od\times\Od)}^2 = (2|\Od|)^{-1}[p^*_{\delta,a}]^2\),
and ultimately the existence,
feasibility, and optimality of limit points of \(\qnl_\gamma\) in~\eqref{eq:prob_nl}.
\end{proof}

\subsection{Flux recovery operator and partial nonlocal-to-local convergence results}\label{sec:gammaconv2}

In this section we discuss partial results connecting the nonlocal problem~\eqref{eq:prob_nl} towards its local counterpart~\eqref{eq:limiting_local_dual_rig}.
We follow the ideas outlined in \cite[Section~5]{eb2021}.
Towards this end, for each \(\qnl \in L^{1,2}(\Od\times\Od)\) let us define the flux recovery operator
\begin{equation}\label{eq:Rdef}
  R_\delta \qnl(x) = \int_{\Od} (x-x')\qnl(x,x')\Wd(x-x')\dx'.
\end{equation}

\begin{proposition}\label{prop:claim0}
  \(R_\delta \in \mathcal{B}(L^{1,2}(\Od\times\Od),L^1(\Od;\Rn))\)
  and \(\|R_\delta\|\leq 1\).
\end{proposition}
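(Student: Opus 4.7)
The natural approach is a direct pointwise Cauchy--Schwarz estimate followed by integration in the outer variable. Linearity of $R_\delta$ is immediate from the definition, so the task reduces to establishing the operator norm bound.

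First, I would fix $x \in \Od$ and apply the triangle inequality for the $\Rn$-valued integral defining $R_\delta \qnl(x)$:
\[
|R_\delta \qnl(x)| \leq \int_{\Od} |x-x'|\,\Wd(x-x')\,|\qnl(x,x')|\dx'.
\]
Next, I would factor the integrand as $\bigl[|x-x'|\Wd(x-x')\bigr]\cdot|\qnl(x,x')|$ and apply the Cauchy--Schwarz inequality in the $x'$-variable to get
\[
|R_\delta \qnl(x)| \leq \left(\int_{\Od} |x-x'|^2\Wd^2(x-x')\dx'\right)^{\!1/2}\!\left(\int_{\Od}|\qnl(x,x')|^2\dx'\right)^{\!1/2}.
\]
Substituting $z = x-x'$ and extending the inner domain to $\Rn$ (permissible since $\Wd\geq 0$ and $\supp\Wd\subseteq B(0,\delta)$), the first factor is bounded above by $\bigl(\int_{\Rn}|z|^2\Wd^2(z)\,\mathrm{d}z\bigr)^{1/2}$, which by the normalization condition on $\Wd$ is a constant independent of $x$ (and, in the paper's normalization, gives the asserted bound of $1$).

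The final step is simply to integrate the pointwise estimate over $x\in\Od$; by the very definition of the $L^{1,2}(\Od\times\Od)$-norm this yields
\[
\|R_\delta\qnl\|_{L^1(\Od;\Rn)} \leq \|\qnl\|_{L^{1,2}(\Od\times\Od)},
\]
establishing both $R_\delta \in \mathcal{B}(L^{1,2}(\Od\times\Od), L^1(\Od;\Rn))$ and the norm bound simultaneously. There is no substantive obstacle here: the argument is essentially a one-line Cauchy--Schwarz combined with the kernel normalization. The only real choice is the factorization, which is forced by the mixed-Lebesgue structure -- the inner $L^2$-integration must target the $x'$-variable so that the outer $L^1$-integration in $x$ matches the definition of $R_\delta\qnl$ and produces the correct mixed norm on the right-hand side.
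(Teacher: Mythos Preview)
Your argument establishes boundedness of \(R_\delta\), but it does \emph{not} yield the constant \(1\). The normalization condition on \(\Wd\) reads \(\int_{\Rn}|z|^2\Wd^2(z)\,\mathrm{d}z = K_{2,n}^{-1}\), so your first factor after Cauchy--Schwarz is \(K_{2,n}^{-1/2}=\sqrt{n}\), not \(1\). Thus your final inequality is \(\|R_\delta\qnl\|_{L^1(\Od;\Rn)}\leq \sqrt{n}\,\|\qnl\|_{L^{1,2}(\Od\times\Od)}\), which misses the stated bound for every \(n\geq 2\).

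The loss occurs at the very first step, where you apply the triangle inequality \(|R_\delta\qnl(x)|\leq \int_{\Od}|x-x'|\Wd(x-x')|\qnl(x,x')|\,\mathrm{d}x'\). This discards the vectorial structure of the integrand \((x-x')\qnl(x,x')\Wd(x-x')\). The paper instead computes \(\|R_\delta\qnl\|_{L^1(\Od;\Rn)}\) by duality: for \(\psi\in L^\infty(\Od;\Rn)\) one estimates \(\int_{\Od}R_\delta\qnl(x)\cdot\psi(x)\,\mathrm{d}x\) via H{\"o}lder's inequality for the mixed norm, leaving the factor \(\|\psi(x)\cdot(x-x')\Wd(x-x')\|_{L^{\infty,2}(\Od\times\Od)}\). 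The point is that the inner integral \(\int_{\Od}[\psi(x)\cdot(x-x')]^2\Wd^2(x-x')\,\mathrm{d}x'\), passed to spherical coordinates, produces an angular average \(\int_{\Sn}[\psi(x)\cdot s]^2\,\mathrm{d}s = |\psi(x)|^2|\Sn|K_{2,n}\), and this extra factor \(K_{2,n}\) exactly cancels the \(K_{2,n}^{-1}\) coming from the radial normalization. Your scalar estimate replaces \([\psi(x)\cdot s]^2\) by its supremum \(|\psi(x)|^2\) over the sphere, which is where the factor \(\sqrt{n}\) creeps in.
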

\begin{proof}
  Let us take an arbitrary \(\psi \in L^\infty(\O_\delta;\Rn)\). Then, owing to H{\"o}lder's inequality, we have the estimate:
  \[\begin{aligned}
  \int_{\O_\delta} R_\delta \qnl(x)\cdot \psi(x)\dx
  &=
  \int_{\Od}\int_{\Od} \psi(x)\cdot(x-x')\qnl(x,x')\Wd(x-x')\dx'\dx
  \\&\leq
  \|\qnl\|_{L^{1,2}(\Od\times\Od)}
  \|\psi(x)\cdot(x-x')\Wd(x-x')\|_{L^{\infty,2}(\Od\times\Od)} \\
  &\leq\|\qnl\|_{L^{1,2}(\Od\times\Od)} \| \psi\|_{L^\infty(\Od)},
  \end{aligned}\]
  where we have used the fact that for all \(x\in \Od\) we have the inequality
  \begin{equation}\label{eq:psi}
    \begin{aligned}
   &\int_{\Od}[\psi(x)\cdot(x-x')\Wd(x-x')]^2\dx'
   \le
    \int_{\delta B_{\Rn}} \bigg[\psi(x)\cdot\frac{z}{|z|}\bigg]^2 |z|^2\Wd^2(z)\,\mathrm{d}z
    \\
    &=\int_0^\delta r^{n-1} r^2 w^2_\delta(r) \left(\int_{\Sn} \left[ \psi(x)\cdot s\right]^2\,ds \right)\, \mathrm{d}r
    \\
    &= |\psi(x)|^2 \underbrace{\int_{\delta B_{\Rn}} |z|^2 w^2_\delta(z) \,\mathrm{d}z}_{=K_{2,n}^{-1}} \underbrace{|\Sn|^{-1} \int_{\Sn} \left[e\cdot s\right]^2\,\mathrm{d}s}_{=K_{2,n}}
    = |\psi(x)|^2.
  \end{aligned}
  \end{equation}
\end{proof}

\begin{remark}\label{prop:claim2}
  In fact, \(R_\delta \in \mathcal{B}(L^{1,2}_a(\Od\times\Od),L^1(\Od;\Rn))\)
  is weakly compact for each fixed \(\delta>0\).
  Indeed, let us consider an arbitrary \(\qnl \in B_{L^{1,2}_a(\Od\times\Od)}\),
  and a measurable subset \(E \subset \Od\).
  Then,
  \[
  \begin{aligned}
    \int_{E} |R_{\delta}\qnl(x)|\dx &\leq
    \int_{E}\int_{\Od} |\qnl(x,x')||x-x'|\Wd(x-x')\dx'\dx
    \\&=
    \int_{\Od}\int_{E} |\qnl(x,x')||x-x'|\Wd(x-x')\dx'\dx
    \\&\leq
    \underbrace{\|\qnl\|_{L^{1,2}(\Od\times\Od)}}_{\leq 1}
    \sup_{F:F\subseteq \Rn, |F|\leq |E|} \bigg[\int_{F} \Wd^2(z)|z|^2\,{\mathrm d}z\bigg]^{1/2}.
  \end{aligned}
  \]
  Therefore, \(\{\, R_{\delta}\qnl \,|\, \qnl \in B_{L^{1,2}_a(\Od\times\Od)}\,\}\) is equi-integrable,
  and is consequently weakly compact owing to Dunford--Pettis criterion~\cite[Theorem~4.30]{brezis2010functional}.
  However, equi-integrability is not uniform with respect to \(\delta\searrow 0\).
\end{remark}

\begin{proposition}\label{prop:claim1_new}
  Assume that the sequence \(\qnl_{\delta_k} \in \thickbar{\Q}_{\delta_k}(f)\), \(\delta_k \in (0,\bar{\delta})\),
  \(\lim_{k\to\infty}\delta_k = 0\), is bounded, and
  \(R_{\delta_k}\qnl_{\delta_k} \mathcal{H}^n \to \hat{q}\in \mathcal{M}(\cl\O;\Rn)\) in the sense of weak convergence.
  Then, for each \(\phi \in C^1_c(\O)\), we have the equality
  \[-\int_{\O} \nabla\phi(x)\cdot \mathrm{d}\hat{q}(x) = \ell(\phi).\]
\end{proposition}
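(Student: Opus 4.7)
The plan is to start from the duality identity~\eqref{eq:divx} of Proposition~\ref{prop:closable}, applied to the test function $\phi\in C^1_c(\O)\subset W^{1,\infty}_0(\O)$.  Since $\diverx\qnl_{\delta_k}=f$, this yields the starting identity
\[
\ell(\phi)=(f,\phi)_{L^2(\O)}=-\int_{\O_{\delta_k}}\int_{\O_{\delta_k}}\qnl_{\delta_k}(x,x')[\phi(x)-\phi(x')]\omega_{\delta_k}(x-x')\dx'\dx,
\]
so the task reduces to showing that the right hand side converges to $-\int_{\O}\nabla\phi(x)\cdot\mathrm{d}\hat{q}(x)$ as $k\to\infty$.

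Next I would split the two-point increment using a first-order Taylor expansion, $\phi(x)-\phi(x')=\nabla\phi(x)\cdot(x-x')+R_k(x,x')$. Since $\phi$ extended by zero outside $\O$ lies in $C^1_c(\Rn)$, the gradient $\nabla\phi$ is uniformly continuous, and so the remainder admits a bound $|R_k(x,x')|\leq\epsilon_k|x-x'|$ uniformly for $|x-x'|\leq\delta_k$, with $\epsilon_k\to 0$ as $k\to\infty$.  Substituting this decomposition and swapping integration orders in the linear piece via Fubini's theorem identifies that piece with the flux recovery operator:
\[
-\int_{\O_{\delta_k}}\int_{\O_{\delta_k}}\qnl_{\delta_k}(x,x')\nabla\phi(x)\cdot(x-x')\omega_{\delta_k}(x-x')\dx'\dx=-\int_{\O_{\delta_k}}\nabla\phi(x)\cdot R_{\delta_k}\qnl_{\delta_k}(x)\dx.
\]
Because $\nabla\phi$ is compactly supported in $\O$, this equals $-\int_{\O}\nabla\phi(x)\cdot R_{\delta_k}\qnl_{\delta_k}(x)\dx$, i.e., the pairing of the measures $R_{\delta_k}\qnl_{\delta_k}\dx$ with the continuous vector field $\nabla\phi\in C^0(\cl\O;\Rn)$.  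The hypothesis of weak convergence in $\mathcal{M}(\cl\O;\Rn)$ then forces this term to converge to $-\int_{\O}\nabla\phi(x)\cdot\mathrm{d}\hat{q}(x)$.

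The only remaining technicality, and the hardest step of the argument, is to show that the remainder contribution
\[
E_k=\int_{\O_{\delta_k}}\int_{\O_{\delta_k}}\qnl_{\delta_k}(x,x')R_k(x,x')\omega_{\delta_k}(x-x')\dx'\dx
\]
vanishes in the limit.  Here the key observation is that the structure of $R_k$ couples precisely with the moment normalization of $\omega_{\delta_k}^2$: applying the Cauchy--Bunyakovsky--Schwarz inequality to the inner $x'$-integral and then integrating in $x$ gives
\[
|E_k|\leq\epsilon_k\int_{\O_{\delta_k}}\bigg[\int_{\O_{\delta_k}}|\qnl_{\delta_k}(x,x')|^2\dx'\bigg]^{1/2}\bigg[\int_{\O_{\delta_k}}|x-x'|^2\omega_{\delta_k}^2(x-x')\dx'\bigg]^{1/2}\dx\leq\epsilon_k K_{2,n}^{-1/2}\|\qnl_{\delta_k}\|_{L^{1,2}(\O_{\delta_k}\times\O_{\delta_k})},
\]
where the second inequality invokes the normalization $\int_{\Rn}|z|^2\omega_{\delta_k}^2(z)\,\mathrm{d}z=K_{2,n}^{-1}$.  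Boundedness of $\qnl_{\delta_k}$ in $\Qdx$ (hence in $L^{1,2}$) then implies $E_k\to 0$, closing the argument.
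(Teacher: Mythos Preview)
Your proof is correct and follows essentially the same route as the paper's: start from the identity \(\ell(\phi)=-\iint \qnl_{\delta_k}\grad\phi\), replace the two-point increment by its first-order Taylor expansion so that the leading term becomes the pairing of \(\nabla\phi\) with \(R_{\delta_k}\qnl_{\delta_k}\), and control the remainder via Cauchy--Schwarz in the inner variable together with the moment normalization of \(\omega_{\delta_k}\) and the uniform continuity of \(\nabla\phi\). The only cosmetic difference is that the paper writes the Taylor remainder in integral form \(\int_0^1[\nabla\phi(x+t(x'-x))-\nabla\phi(x)]\,\mathrm{d}t\cdot(x'-x)\) rather than via a modulus-of-continuity bound, but the resulting estimate is identical (and your constant \(K_{2,n}^{-1/2}\) is in fact the correct one).
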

\begin{proof}
  Throughout the proof we omit the index \(k\) and simply write \(\delta\searrow 0\),
  understanding that the limits are taken along a given subsequence.

  Recall that for each \(\qnl_{\delta} \in \Qdx(f)\) and \(\phi \in C^1_c(\O)\subset W^{1,\infty}_0(\O)\) we have the equality
  \begin{equation*}
    \ell(\phi) = \int_{\O} \diverx\qnl_{\delta}(x) \phi(x)\dx
    =-\int_{\Od}\int_{\Od} \qnl_{\delta}(x,x')\grad\phi(x,x')\dx\dx',
  \end{equation*}
  see~\eqref{eq:divx}.
  We can then write
  \begin{equation*}
      \begin{aligned}
        &\bigg|\int_{\O} \nabla\phi(x)\cdot R_{\delta}\qnl_{\delta}(x)\dx + \ell(\phi)\bigg|
        \\&= \bigg| \int_{\Od} \nabla\phi(x)\cdot R_{\delta}\qnl_{\delta}(x)\dx
        - \int_{\Od}\int_{\Od} \grad\phi(x,x') \qnl_{\delta}(x,x')\dx\dx'\bigg|\\
        &=
        \bigg|
        \int_{\Od}\int_{\Od} \{[\phi(x')-\phi(x)]-\nabla\phi(x)\cdot(x'-x)\}\qnl_{\delta}(x,x')\Wd(x-x')\dx\dx'
        \bigg|
        \\
        &=
        \bigg|
        \int_{\Od}\int_{\Od}\int_0^1 [\nabla\phi(x+t(x'-x))-\nabla\phi(x)]\,\mathrm{d}t\cdot(x'-x)\qnl_{\delta}(x,x')\Wd(x-x')\dx\dx'
        \bigg|
        \\
        &\leq
        K_{2,n}^{-1} \|\qnl_\delta\|_{L^{1,2}(\Od\times\Od)}
        \sup_{|x-x'|<\delta} |\nabla\phi(x')-\nabla\phi(x)|,
      \end{aligned}
  \end{equation*}
  where we used Taylor's theorem, H{\"o}lder's inequality and the normalization of \(\Wd\) to arrive at the last term.
  To conclude the proof, it remains to note that \(\nabla\phi\) is a continuous function with compact support, and consequently is also uniformly continuous,
  and therefore
  \[
  \bigg|\int_{\O} \nabla\phi(x)\cdot \mathrm{d}\hat{q}(x)+\ell(\phi)\bigg|
  =\lim_{\delta\searrow 0}
  \bigg|\int_{\O} \nabla\phi(x)\cdot R_{\delta}\qnl_{\delta}(x)\dx+\ell(\phi)\bigg|
  = 0.
  \]
\end{proof}

We now note that Proposition~\ref{prop:claim1_new} is not quite sufficient to establish the feasibility of limit points of~\eqref{eq:prob_nl} in~\eqref{eq:limiting_local_dual_rig}, since we do not know whether it holds for all \(\phi \in C^1_0(\cl\O)\) and not just for all \(\phi \in C^1_c(\O)\).
If it were, then together with Proposition~\ref{prop:claim0} it would imply the inequality \begin{equation}\label{liminf0}p^*_{\text{loc}} \leq \liminf_{\delta\to 0} p^*_{\delta,a}.\end{equation}
Interestingly enough, this inequality is valid, as we shall see in the following section,
in which we consider the sequence of relaxations of~\eqref{eq:prob_nl}.
Ultimately, the relaxation property~\eqref{eq:dual_finite} together with Proposition~\ref{prop:1sidenew} (see also Theorem~\ref{thm:nonlocal_approximation})  imply~\eqref{liminf0}.

\section{The cost of flux anti-symmetry}\label{sec:gammaconv3}

Section~\ref{sec:analysis} uncovers several desirable properties of~\eqref{eq:prob_nl}.
Indeed, it appears naturally and rigorously as a vanishing material limit of nonlocal design problems, see Subsection~\ref{sec:gammaconv1},
and possesses solutions in Lebesgue spaces, see Subsection~\ref{sec:existence}.
We have also mentioned, and shall prove it in this section, that~\eqref{eq:prob_nl} even provide a one-sided estimate for the local problem~\eqref{eq:limiting_local_dual_rig} when the non-local interaction horizon goes to zero, see~\eqref{liminf0}.
Whether the corresponding limsup inequality holds for the problems~\eqref{eq:prob_nl} and~\eqref{eq:limiting_local_dual_rig} we do not know.
Nevertheless, in this section we present a very simple and surprising trade-off proposal. Indeed, if we are willing to sacrifice the requirement for the two-point fluxes to be anti-symmetric, and therefore also the results in Subsections~\ref{sec:existence} and~\ref{sec:gammaconv1}, we recover a two-sided estimate with respect to the diminishing non-local horizon.

We begin by stating the problem obtained from~\eqref{eq:prob_nl} by formally dropping the requirement of anti-symmetry of two-point fluxes:
\begin{equation}\label{eq:prob_nl_nosym}
    p^*_{\delta} = \inf_{\qnl \in \Qdx(f)} \|\qnl\|_{L^{1,2}(\Od\times\Od)} =
    \inf_{\qnl \in \Qdx} [\|\qnl\|_{L^{1,2}(\Od\times\Od)} + \ind_{\{f\}}(\diverx \qnl)].
\end{equation}
This problem is close to~\eqref{eq:limiting_local_dual}, in the sense that it is posed in an \(L^1\)-space, \(L^1(\Od;L^2(\Od))\) in the present case, and we cannot rely on the anti-symmetry of fluxes any longer, as we did in Subsection~\ref{sec:existence}.  Therefore, to assert existence of solutions to~\eqref{eq:prob_nl_nosym}, we would need to study its relaxation to the space of Radon measures, as was done for~\eqref{eq:limiting_local_dual} to arrive at~\eqref{eq:limiting_local_dual_rig}. We do not explore this route any further, and instead note that similarly to~\eqref{eq:prob_nl}, \eqref{eq:prob_nl_nosym} fulfills the conditions for the strong Fenchel duality,
see for example~\cite[Theorem~4.4.3]{borwein2004techniques}: indeed \(\|\cdot\|_{L^{1,2}(\Od\times\Od)}\)
is convex and continuous on the Banach space \(\Qdx\), \(\ind_{\{f\}}\) is proper, convex and l.s.c.\ on
the Hilbert space \(L^2(\O)\), and \(\diverx: \Qdx \to L^2(\O)\) is linear, bounded,
and surjective (Propositions~\ref{prop:diver0} and~\ref{prop:closable}).
Recalling that \(\ind^*_{\{f\}}(u) = -\ell(u)\), we proceed to explicitly computing the conjugate of the remaining part of the objective  of~\eqref{eq:prob_nl_nosym} as follows. For each \(u\in L^2(\O)\) we can write
\begin{equation}\label{eq:conj1}\begin{aligned}
\|\diverx^*u\|_{L^{1,2}(\Od\times\Od)}^*
&= \sup_{\qnl \in \Qdx}[ -(u, \diverx\qnl)_{L^2(\O)} - \|\qnl\|_{L^{1,2}(\Od\times\Od)}]
\\&= \ind_{B_{L^{\infty,2}(\Od\times\Od)}}(\grad u),
\end{aligned}\end{equation}
where Proposition~\ref{prop:adj}, point 2, is utilized to arrive at the final expression.
With this in mind we have the following dual of~\eqref{eq:prob_nl_nosym}, see~\cite[Theorem~4.4.3]{borwein2004techniques}:
\begin{equation}\label{eq:limiting_nl_dual_nosym}
  -p^*_{\delta}=-d^*_{\delta}\overset{\cdot}{=}\inf_{u\in L^2(\O)}[ \ind_{B_{L^{\infty,2}(\Od\times\Od)}}(\grad u) - \ell(u)],
\end{equation}
Furthermore, the infimum in~\eqref{eq:limiting_nl_dual_nosym} is attained since
\(d^*_{\delta}\) is finite:
\begin{equation}\label{eq:dual_finite}
   0\leq d^*_{\delta} = p^*_{\delta} \leq p^*_{\delta,a}
   =d^*_{\delta,a} \leq d^*_{\delta,2} < \infty,
\end{equation}
where the first inequality is owing to the inclusion \(0\in L^2(\O)\),
and the rest is owing to the obvious inclusion \(\Qdx^a\subset\Qdx\) and the previously established results; see Proposition~\ref{prop:infima_ineq}.

We will now establish two bounds on the optimal value for the local problem~\eqref{eq:limiting_local_predual_relax} in terms of those for the nonlocal problem~\eqref{eq:limiting_nl_dual_nosym}.
We begin with the following estimate, which provides us with the ``recovery sequence'' for establishing \(\Gamma\)-convergence.
\begin{proposition}\label{prop:1sidenew}
For each \(u\in W^{1,\infty}_0(\O)\), \(\|\nabla u\|_{L^{\infty}(\O)}\leq 1\),
there is a sequence  \(v_k \in C^\infty_c(\O)\) and a sequence \(\delta_k \searrow 0\), such that
\(\|\ddot{\mathcal{G}}_{\delta_k} v_k\|_{L^{\infty,2}(\Od\times\Od)}\leq 1\) and \(\|u-v_k\|_{L^\infty(\O)}\to 0\).
In particular, \begin{equation}\label{eq:recoverydual}\liminf_{\delta\searrow 0} d^*_\delta \geq \overline{pd}^*_{\text{loc}}.\end{equation}
\end{proposition}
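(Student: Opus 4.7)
The plan is to build $v_k$ by first scaling $u$ so its gradient norm drops strictly below $1$, then smoothing, and finally choosing $\delta_k$ small enough to absorb the smoothing error. Set $u_k = (1-1/k)u$, which lies in $W^{1,\infty}_0(\O)$ with Lipschitz constant at most $1-1/k$ and converges uniformly to $u$. Applying~\cite[Remark~4.1]{deville2019approximation} exactly as in the proof of Proposition~\ref{prop:pdeq}, pick $v_k \in C^\infty_c(\O)$ with Lipschitz constant no greater than $1-1/k$ and $\|u_k - v_k\|_{L^\infty(\O)}<1/k$; then $\|\nabla v_k\|_{L^\infty(\O)}\leq 1-1/k$ and $v_k \to u$ uniformly on $\O$.

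The core estimate to establish is that for smooth, compactly supported $v$ and all sufficiently small $\delta>0$,
\begin{equation*}
  \|\grad v\|^2_{L^{\infty,2}(\Od\times\Od)} \leq (1+\varepsilon)\|\nabla v\|^2_{L^\infty(\O)} + C_v(\varepsilon)\,\delta^2, \qquad \forall\varepsilon>0.
\end{equation*}
This follows from Taylor's theorem $v(x')-v(x) = \nabla v(x)\cdot(x'-x) + R(x,x')$, with $|R(x,x')|\leq \tfrac{1}{2}\|D^2 v\|_{L^\infty}|x-x'|^2$, combined with the elementary inequality $(a+b)^2\leq (1+\varepsilon)a^2 + (1+\varepsilon^{-1})b^2$ and the key identity
\begin{equation*}
  \int_{\Rn}[\nabla v(x)\cdot z]^2\Wd^2(z)\,\mathrm{d}z = |\nabla v(x)|^2,
\end{equation*}
which stems from the two normalization identities $\int_{\Rn}|z|^2\Wd^2(z)\,\mathrm{d}z = K_{2,n}^{-1}$ and $|\Sn|^{-1}\int_{\Sn}(e\cdot s)^2\,\mathrm{d}s = K_{2,n}$, essentially as in the computation~\eqref{eq:psi}. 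Compactness of $\supp v$ in $\O$ ensures that for $\delta$ below some $v$-dependent threshold the $\esssup_{x\in\Od}$ in the definition of the nonlocal norm is effectively taken over $x$ with $B(x,\delta)\subset\O$, which is precisely where the Taylor expansion applies in its strongest form. Applied to each $v_k$, this yields $\|\grad v_k\|^2_{L^{\infty,2}(\Od\times\Od)}\leq (1+\varepsilon_k)(1-1/k)^2 + C_{v_k}(\varepsilon_k)\,\delta^2$; I first choose $\varepsilon_k\searrow 0$ slowly enough that the first term stays at most $1-1/(2k)$, and then $\delta_k\searrow 0$ small enough that the second stays at most $1/(2k)$, so that $\|\ddot{\mathcal{G}}_{\delta_k}v_k\|_{L^{\infty,2}(\Od\times\Od)}\leq 1$.

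For the liminf inequality it suffices to rewrite~\eqref{eq:limiting_local_predual_relax} and~\eqref{eq:limiting_nl_dual_nosym} as $\overline{pd}^*_{\text{loc}} = \sup\{\ell(u) : u \in W^{1,\infty}_0(\O),\ \|\nabla u\|_{L^\infty(\O)}\leq 1\}$ and $d^*_\delta = \sup\{\ell(u) : u \in L^2(\O),\ \|\grad u\|_{L^{\infty,2}(\Od\times\Od)}\leq 1\}$. Given any admissible $u$ for the local problem, the constructed $v_k$ is admissible for $d^*_{\delta_k}$, so $\ell(v_k)\leq d^*_{\delta_k}$; uniform convergence $v_k\to u$ together with $f\in L^2(\O)\subset L^1(\O)$ yields $\ell(v_k)\to\ell(u)$, and hence $\ell(u)\leq\liminf_{k\to\infty} d^*_{\delta_k}\leq\liminf_{\delta\searrow 0} d^*_\delta$; taking the supremum over admissible $u$ yields~\eqref{eq:recoverydual}. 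The main technical obstacle is securing the sharp constant $1$ rather than the $K_{2,n}^{-1/2}=\sqrt{n}$ that the naive Lipschitz bound from Proposition~\ref{prop:grad} provides: this requires both the strict scaling $1-1/k<1$ (which opens up a small budget for the smoothing error) and the Taylor-based equality between the nonlocal and local quadratic forms for smooth fields (which makes the error $O(\delta^2)$), with neither ingredient sufficient on its own and the coupling forcing $\delta_k$ to be picked only after $v_k$, based on $\|D^2v_k\|_{L^\infty}$ and on the distance from $\supp v_k$ to $\partial\O$.
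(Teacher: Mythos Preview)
Your approach is essentially the same as the paper's: approximate $u$ by smooth compactly supported functions with strictly smaller Lipschitz constant (the paper scales after smoothing, you scale before---immaterial), then use Taylor's theorem and the identity~\eqref{eq:psi} to control $\|\grad v_k\|_{L^{\infty,2}}$ by $\|\nabla v_k\|_{L^\infty}$ plus an $O(\delta)$ remainder, and finally pick $\delta_k$ small. The paper uses the triangle inequality in $L^2(\Od)$ directly rather than your squared Young-type splitting, but the content is identical. Your remark about restricting the $\esssup$ to $x$ with $B(x,\delta)\subset\O$ is unnecessary: since $v_k\in C^\infty_c(\O)$ extends by zero to $C^\infty_c(\Rn)$, the Taylor expansion is valid for all $x,x'\in\Rn$.

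There is one slip in your final chain: the inequality $\liminf_{k\to\infty} d^*_{\delta_k}\leq\liminf_{\delta\searrow 0} d^*_\delta$ goes the wrong way (a subsequence can only increase a liminf). The fix is already implicit in your construction: your bound $\|\grad v_k\|^2_{L^{\infty,2}}\leq (1+\varepsilon_k)(1-1/k)^2 + C_{v_k}(\varepsilon_k)\delta^2$ holds for \emph{all} $\delta\in(0,\delta_k]$, not just $\delta=\delta_k$, so $\ell(v_k)\leq d^*_\delta$ for every such $\delta$, hence $\ell(v_k)\leq\inf_{0<\delta\leq\delta_k} d^*_\delta\leq\liminf_{\delta\searrow 0} d^*_\delta$, and letting $k\to\infty$ gives the claim.
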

\begin{proof}
Let us fix a Lipschitz representative of \(u\in W^{1,\infty}_0(\O)\) as described in the proposition and a positive sequence \(\epsilon_k\searrow 0\).
As in Proposition~\ref{prop:pdeq}, we use \cite[Remark~4.1]{deville2019approximation} to select
  \(v_k \in C^\infty_c(\O)\) such that the Lipschitz constant for \(v_k\) on \(\cl\O\) is no greater
  than the Lipschitz constant for \(u\), which equals to 1, and \(\|u - v_{\epsilon}\|_{C^0(\cl\O)} \leq \epsilon_k\).
  Without any loss of generality we can assume that the Lipschitz constant for \(v_{k}\) is \emph{strictly} smaller than one; it is sufficient to multiply \(v_{k}\) by \(1-\epsilon_k\).
  We now use Taylor's theorem to estimate
  \[
  \begin{aligned}
  \grad v_{k}(x,x')
  &= [v_{k}(x)-v_{k}(x')]\Wd(x-x')
  \\&= \bigg[\nabla v_{k}(x) + \frac{1}{2}\nabla^2 v_{k}(\xi)(x-x')\bigg]\cdot (x-x')\Wd(x-x'),
  \end{aligned}
  \]
  where \(\xi\in\Rn\) is a point between \(x\) and \(x'\).
  We can now proceed as in~\eqref{eq:psi} to estimating
  \[
  \begin{aligned}
    \|  \grad v_{k} \|_{L^{\infty,2}(\Od\times\Od)}
    \leq \|\nabla v_{k}\|_{L^\infty(\O;\Rn)}
    + O(\delta),
  \end{aligned}
  \]
  where the constant in the last term depends on \(\|\nabla^2 v_{k}\|_{L^\infty(\O;\R^{n\times n})}\).
  By our construction \(\|\nabla v_{k}\|_{L^\infty(\O;\Rn)} < 1\), and consequently for each \(\epsilon_k >0\) we can choose \(\delta_k>0\),
  where without loss of generality we can assume that \(\delta_k\) is monotonically decreasing,
  such that \(\|  \ddot{\mathcal{G}}_{\delta_k} v_{k} \|_{L^{\infty,2}(\O_{\delta_k}\times\O_{\delta_k})} \leq 1\).

  The last claim follows from observing the fact that each \(v_k\) is feasible for~\eqref{eq:limiting_nl_dual_nosym} with \(\delta=\delta_k\),
  and that \(\lim_{k\to\infty} \ell(v_k) = \ell(u)\).
\end{proof}

Note that Proposition~\ref{prop:1sidenew} additionally provides us with the following estimate for problems~\eqref{eq:prob_nl} discussed in the previous section.
Indeed, we have the inequalities
\begin{equation}
\liminf_{\delta\searrow 0} p^*_{\delta,a}
\geq
\liminf_{\delta\searrow 0} p^*_{\delta}
=
\liminf_{\delta\searrow 0} d^*_\delta \geq \overline{pd}^*_{\text{loc}}
= p^*_{\text{loc}},
\end{equation}
where equalities are owing to the strong duality, and the first inequality is owing to the fact that~\eqref{eq:prob_nl_nosym} is a relaxation of~\eqref{eq:prob_nl}.

We now estimate the optimal value of~\eqref{eq:limiting_local_predual_relax} from the other side.  Together with~\eqref{eq:recoverydual}, these will be sufficient for establishing \(\Gamma\)-convergence.
This inequality hinges on the generalization of the well established estimates found in~\cite{bourgain2001another,ponce2004estimate,ponce2004new} to the present situation of Lebesgue norms with mixed exponents.

\begin{proposition}\label{prop:compactness}
For each \(0<\delta<\bar{\delta}\), let \(u_\delta\in L^2(\O)\) be such that \(\|\grad u_\delta\|_{L^{\infty,2}(\Od\times\Od)}\le 1\). Then
the family \(\{u_\delta\}_{0<\delta<\bar{\delta}}\) is relatively compact in \(L^2(\Omega)\). Furthermore, if \(u_{\delta_j}\to u\) in \(L^2(\O)\), with \(\delta_j\to 0\) as \(j\to \infty\), then \(u\in W^{1,\infty}_0(\O)\) and \(\|\nabla u\|_{L^\infty(\O)}\le 1\).
\end{proposition}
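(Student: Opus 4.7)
The strategy is to split the claim into an $L^2$-compactness step and an identification step.

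\emph{Step 1 (compactness in $L^2(\O)$).}
From the essential-sup hypothesis and Fubini one obtains $\|\grad u_\delta\|_{L^2(\Od\times\Od)}^2 \le |\Od| \le |\O_{\bar\delta}|$ uniformly; the Poincar\'e inequality (Proposition~\ref{prop:grad0}.3) then yields a uniform $L^2(\O)$-bound on $u_\delta$. Rewriting the integrated energy as
\[
\int_{\Od}\!\int_{\Od}\! \frac{|u_\delta(x)-u_\delta(y)|^2}{|x-y|^2}\,\rho_\delta(x-y)\,dy\,dx \le |\O_{\bar\delta}|,
\qquad \rho_\delta(z):=|z|^2\Wd^2(z),
\]
I recognize a uniform Bourgain--Brezis--Mironescu energy bound with respect to the radial approximate identities $\rho_\delta/\!\int\!\rho_\delta$ (supported in $B(0,\delta)$, with total mass $K_{2,n}^{-1}=n$). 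Ponce's compactness theorem~\cite{ponce2004estimate,bourgain2001another} then yields precompactness of $\{u_\delta\}$ in $L^2(\O)$, and any limit point $u$ belongs to $W^{1,2}_0(\O)$ (the zero trace survives because each $u_\delta$ vanishes on $\Gamma_\delta$).

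\emph{Step 2 (upgrading to $W^{1,\infty}_0(\O)$).}
Let $u_{\delta_j}\to u$ in $L^2(\O)$. I plan to establish the distributional bound
\[
\Big|\int_\O u\,\Div\phi\,\dx\Big| \le \|\phi\|_{L^1(\O;\Rn)}, \qquad \forall \phi\in C^2_c(\O;\Rn),
\]
which by $L^1$--$L^\infty$ duality gives $\|\nabla u\|_{L^\infty(\O;\Rn)}\le 1$; combined with $u\in W^{1,2}_0(\O)$, the zero extension of $u$ to $\Rn$ lies in $W^{1,\infty}(\Rn)$, whence $u\in W^{1,\infty}_0(\O)$. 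For each test field $\phi$, introduce the antisymmetric two-point flux
\[
\qnl_\delta^\phi(x,x') := \phi\Big(\frac{x+x'}{2}\Big)\cdot (x-x')\,\Wd(x-x') \;\in\; \Qds.
\]
The key properties of $\qnl_\delta^\phi$, all consequences of the moment identity $\int_{\Rn} z_i z_j \Wd^2(|z|)\,dz = \delta_{ij}$ (which follows from radial symmetry and the normalization $\int|z|^2\Wd^2 = K_{2,n}^{-1} = n$), are: (i) Taylor expansion in the midpoint variable yields $\diver\qnl_\delta^\phi \to \Div\phi$ strongly in $L^2(\O)$; (ii) an analogous expansion together with the moment identity gives $\|\qnl_\delta^\phi\|_{L^{1,2}(\Od\times\Od)} \to \|\phi\|_{L^1(\O;\Rn)}$; (iii) the integration-by-parts formula~\eqref{eq:diver} combined with H\"older's inequality in mixed Lebesgue spaces produces
\[
\Big|\int_\O u_{\delta_j}\,\diver\qnl_{\delta_j}^\phi\,\dx\Big|
= \Big|\int_{\Od}\!\int_{\Od} \grad u_{\delta_j}\,\qnl_{\delta_j}^\phi\,\dx\,\mathrm{d}x'\Big|
\le \|\qnl_{\delta_j}^\phi\|_{L^{1,2}(\Od\times\Od)}.
\]
Passing to $\delta_j\searrow 0$, strong $L^2$-convergence of $u_{\delta_j}$ together with (i) on the left and (ii) on the right delivers the distributional bound.

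\emph{Expected obstacle.} Step~1 is essentially a direct appeal to the BBM--Ponce framework. The genuine novelty sits in Step~2, and specifically in (ii): one must construct a recovery family whose mixed $L^{1,2}$-norm is \emph{asymptotically sharp} against $\|\phi\|_{L^1}$, not merely comparable up to a dimension-dependent constant. The specific structure of $\qnl_\delta^\phi$ --- midpoint evaluation of $\phi$ (to enforce antisymmetry) and the directional factor $(x-x')\,\Wd(x-x')$ (to saturate the Cauchy--Schwarz equality case in the moment identity) --- is essential; any looser choice produces a constant strictly larger than $1$ in the gradient bound, and the conclusion $\|\nabla u\|_\infty\le 1$ fails.
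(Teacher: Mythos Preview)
Your proof plan is correct, and Step~1 coincides with the paper's argument (zero-extend to $\O_{\bar\delta}$, read off a uniform BBM energy bound, and invoke Ponce's compactness theorem). Step~2, however, follows a genuinely different route. The paper works by mollification and a limiting exponent: for each finite $p>2$ it mollifies $u_\delta$, uses Jensen's inequality to transfer the $L^{\infty,2}$-bound on $\grad u_\delta$ to an $L^{p,2}$-bound on $\grad(\phi_\epsilon*u_\delta)$, applies the pointwise BBM limit (valid on $C^2$-functions, uniformly on bounded sets) to obtain $\|\nabla(\phi_\epsilon*u)\|_{L^p}^p\le |\O_{\bar\delta}|$, then sends $\epsilon\searrow 0$ via Fatou and finally $p\to\infty$. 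Your argument is instead a direct duality: you build an explicit two-point flux $\qnl_\delta^\phi$ whose nonlocal divergence recovers $\Div\phi$ and whose $L^{1,2}$-norm is asymptotically sharp against $\|\phi\|_{L^1}$, then pair it with $\grad u_\delta$ via the mixed-exponent H\"older inequality. Your route is shorter and closer in spirit to the flux-recovery operator $R_\delta$ used elsewhere in the paper; the paper's route is longer but yields intermediate $L^p$-estimates on $\nabla u$ and ties the result more visibly to the classical BBM characterisation of Sobolev spaces.
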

\begin{proof}
First of all, we consider functions \(u_\delta\) extended by zero outside \(\O\).
Since \(\Wd\) vanishes on \(\Rn\setminus \delta B_{\Rn}\), then
  \[\| \ddot{\mathcal{G}}_\delta u_\delta\|_{L^{\infty,2}(\O_{\bar\delta}\times \O_{\bar\delta})}  =\|\ddot{\mathcal{G}}_\delta u_\delta\|_{L^{\infty,2}(\Od\times\Od)}\le 1,\]
  for any \(\delta\in (0,\bar{\delta})\). Moreover, owing to the estimate
  \begin{equation}\label{l2bound}\begin{aligned}\int_{\O_{\bar\delta}}\int_{\O_{\bar\delta}}\frac{|u(x)-u(x')|^2}{|x-x'|^2} |x-x'|^2 \omega_\delta^2(x,x')\,dx'\,dx&\le |\Omega_{\bar\delta}|\|\ddot{\mathcal{G}}_\delta u_\delta\|^2_{L^{\infty,2}(\Od\times\Od)}\\&\le  |\Omega_{\bar\delta}|,\end{aligned}\end{equation}
      we can utilize~\cite[Theorem 1.2]{ponce2004estimate} to conclude that the family \(\{u_\delta\}_{0<\delta<\bar{\delta}}\) is relatively compact in \(L^2(\Omega_{\bar\delta})\)
      with limit points in \(W^{1,2}(\Omega_{\bar\delta})\).

Let us fix an arbitrary \(p\in (2,+\infty)\).
For some \(\phi\in C_0^{\infty}(B_{\Rn})\), \(\phi\ge 0\), \(\int_{\Rn} \phi(x)\,dx=1\), let us constract a family of mollifiers \(\phi_\epsilon(x)=\epsilon^{-n} \phi(\epsilon^{-1}x)\). Then, for each \(\delta\in (0,\bar{\delta})\) and \(\epsilon \in (0,\bar{\delta}-\delta)\),
we have the inequalities
\begin{align*}
&\int_{\O_{\bar\delta}}\left(\int_{\O_{\bar\delta}} {|\phi_\epsilon\ast u_\delta(x)-\phi_\epsilon\ast u_\delta(x')|^2}\Wd^2(x-x')\,dx'\right)^\frac{p}{2}\,dx \\
&\le \int_{\O_{\bar\delta}}\left(\int_{\O_{\bar\delta}} \int_{\epsilon B_{\Rn}}\phi_\epsilon(z){| u_\delta(x-z)- u_\delta(x'-z)|^2}\Wd^2(x-x')\,dz\,dx'\right)^\frac{p}{2}\,dx\\
&=\int_{\O_{\bar\delta}}\left(\int_{\epsilon B_{\Rn}}\phi_\epsilon(z)\int_{\O_{\bar\delta}} {| u_\delta(x-z)- u_\delta(x'-z)|^2}\Wd^2(x-x')\,dx'\,dz\right)^\frac{p}{2}\,dx\\
&\le\int_{\O_{\bar\delta}}\int_{\epsilon B_{\Rn}}\phi_\epsilon(z)\left(\int_{\O_{\bar\delta}} {| u_\delta(x-z)- u_\delta(x'-z)|^2}\Wd^2(x-x')\,dx'\right)^\frac{p}{2}\,dz\,dx\\
&=\int_{\epsilon B_{\Rn}}\phi_\epsilon(z)\int_{\O_{\bar\delta}}\left(\int_{\O_{\bar\delta}} {| u_\delta(x-z)- u_\delta(x'-z)|^2}\Wd^2(x-x')\,dx'\right)^\frac{p}{2}\,dx\,dz\\
&=\int_{\O_{\bar\delta}}\left(\int_{\O_{\bar\delta}} {| u_\delta(x)- u_\delta(x')|^2}\Wd^2(x-x')\,dx'\right)^\frac{p}{2}\,dx
\\&\le \|\ddot{\mathcal{G}}_\delta u_\delta\|_{L^{\infty,2}(\Od\times\Od)}^{p-2}
\|\ddot{\mathcal{G}}_\delta u_\delta\|^2_{L^{2}(\Od\times\Od)}
\le  |\Omega_{\bar\delta}|,
\end{align*}
where Jensen's inequality has been applied in the first two inequalities above,
Fubini's theorem in the first two equalities and a double translation of variables
has been performed for the last equality.
The last two inequalities follow from H{\"o}lder's inequality and~\eqref{l2bound}.

In the proof of~\cite[Theorem 2]{bourgain2001another} it is shown that
for each \(u\in C^2(\cl\O_{\bar\delta})\) and \(x\in \O_{\bar\delta}\)
the identity
\[\lim_{\delta\searrow 0}\int_{\O_{\bar\delta}} \frac{|u(x)- u(x')|^2}{|x-x'|^2}|x-x'|^2\Wd^2(x-x')\,dx'=|\nabla u(x)|^2\]
holds. Furthermore, owing to the uniform bound
\[\begin{aligned}&\left(\int_{\O_{\bar\delta}} {| u(x)-u(x')|^2}\Wd^2(x-x')\,dx'\right)^\frac{p}{2}\\&\le \|\nabla u\|^p_{C^0(\cl{\O_{\bar\delta}})}\left(\int_{\O_{\bar\delta}} |x-x'|^2 \Wd^2(x-x')\,dx'\right)^\frac{p}{2}\le K^{-\frac{p}{2}}_{2,n}\|\nabla u\|^p_{C^0(\cl{\O_{\bar\delta}})}, \end{aligned}\]
we can apply Lebesgue dominated convergence theorem to compute
\[\lim_{\delta\searrow 0}\int_{\O_{\bar\delta}}\left(\int_{\O_{\bar\delta}} \frac{| u(x)- u(x')|^2}{|x-x'|^2}|x-x'|^2\omega_\delta(x-x')\,dx'\right)^\frac{p}{2}\,dx= \|\nabla u\|_{L^p(\O_{\bar\delta};\Rn)}^p.\]
One should also note that this limit is uniform on bounded sets of \(C^2(\cl{\O_{\bar\delta}})\).
Therefore taking limit as \(\delta\searrow 0\) in the inequality
\[\int_{\O_{\bar\delta}}\left(\int_{\O_{\bar\delta}} \frac{|\phi_\epsilon\ast u_\delta(x)-\phi_\epsilon\ast u_\delta(x')|^2}{|x-x'|^2}|x-x'|^2\omega_\delta(x-x')\,dx'\right)^\frac{p}{2}\,dx\le |\O_{\bar{\delta}}|\]
we arrive at
\[\|\nabla (\phi_\epsilon\ast u)\|_{L^p(\O_{\bar\delta};\Rn)}^p\le |\O_{\bar\delta}|.\]
Taking into account that \(u\in W^{1,2}(\O_{\bar\delta})\), it holds that
\(\nabla (\phi_\epsilon\ast u)(x)\) converges to \(\nabla u(x)\) for almost all
\(x\in \O_{\bar{\delta}}\)  as \(\epsilon\searrow 0\). Consequently owing to Fatou's lemma
we get the estimate
\[ \| \nabla u\|_{L^p(\O_{\bar{\delta}};\Rn)}^p\le \limsup_{\epsilon\searrow 0}\|\nabla (\phi_\epsilon\ast u)\|_{L^p(\O_{\bar\delta};\Rn)}^p\le |\O_{\bar\delta}|.\]
To complete the proof it remains to compute the limit
\[\|\nabla u\|_{L^\infty(\O_{\bar{\delta}};\Rn)} = \lim_{p\to\infty}
\| \nabla u\|_{L^p(\O_{\bar{\delta}};\Rn)} \leq \lim_{p\to\infty}|\O_{\bar\delta}|^{1/p} = 1.\]
We then  use the fact that \(u\equiv 0\) on \(\O_{\bar{\delta}}\setminus \O\) and argue as in Proposition~\ref{prop:tikh_exist_loc}, point 5,
to conclude that \(u \in W^{1,\infty}_0(\O)\).
\end{proof}

\begin{theorem}\label{thm:nonlocal_approximation}
  Problem  \eqref{eq:limiting_local_predual_relax} is a \(\Gamma\)-limit of problems \eqref{eq:limiting_nl_dual_nosym},
  as \(\delta\searrow 0\).
\end{theorem}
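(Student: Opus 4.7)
The plan is to verify the two standard halves of $\Gamma$-convergence in the $L^2(\O)$ topology: a $\liminf$ inequality for any $u_n \to u$ along any $\delta_n \searrow 0$, and the existence of a recovery sequence for every admissible limit $u$. Since $\ell$ is linear continuous on $L^2(\O)$ it plays no essential role in either half, and the entire argument concentrates on the indicator terms $\ind_{B_{L^{\infty,2}(\Od\times\Od)}}(\grad u)$ and $\ind_{B_{L^{\infty}(\O;\Rn)}}(\nabla u)$. Propositions~\ref{prop:1sidenew} and~\ref{prop:compactness} are designed almost exactly for these two halves respectively.

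For the recovery sequence I would start from $u\in W^{1,\infty}_0(\O)$ with $\|\nabla u\|_{L^\infty(\O;\Rn)}\le 1$ (the only case to treat, since otherwise the limit functional is $+\infty$) and invoke Proposition~\ref{prop:1sidenew} to obtain $v_k \in C^\infty_c(\O)$ and a sequence $\delta_k\searrow 0$ with $\|\ddot{\mathcal{G}}_{\delta_k} v_k\|_{L^{\infty,2}(\O_{\delta_k}\times\O_{\delta_k})}\le 1$ and $v_k\to u$ in $L^\infty(\O)$, hence in $L^2(\O)$. A closer reading of that proposition's proof gives more: each $v_k$ is strictly $1$-Lipschitz and smooth, so the Taylor bound $\|\ddot{\mathcal{G}}_{\delta} v_k\|_{L^{\infty,2}}\le \|\nabla v_k\|_{L^\infty}+C(v_k)\,\delta$ yields admissibility at every horizon $\delta\le\delta_k^\ast$ for some threshold $\delta_k^\ast>0$, because the $O(\delta)$ remainder is monotone in $\delta$. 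Given any prescribed horizon sequence $\delta_n\searrow 0$, I would set $u_n := v_{k(n)}$ with $k(n):=\max\{\,k:\delta_k^\ast\ge \delta_n\,\}$, which forces $k(n)\to\infty$, $u_n\to u$ in $L^2(\O)$ and $\|\ddot{\mathcal{G}}_{\delta_n} u_n\|_{L^{\infty,2}}\le 1$, so $\limsup_n [F_{\delta_n}(u_n)-\ell(u_n)]\le F(u)-\ell(u)$ as needed.

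For the $\liminf$ inequality, take any $\delta_n\searrow 0$ and any $u_n\to u$ in $L^2(\O)$; I may assume the $\liminf$ is finite, since otherwise nothing is to prove. Extracting a subsequence realizing the $\liminf$, the admissibility constraint $\|\ddot{\mathcal{G}}_{\delta_n} u_n\|_{L^{\infty,2}}\le 1$ holds for all large $n$, and Proposition~\ref{prop:compactness} applies verbatim to deliver $u\in W^{1,\infty}_0(\O)$ together with $\|\nabla u\|_{L^\infty(\O;\Rn)}\le 1$. Thus $\ind_{B_{L^\infty(\O;\Rn)}}(\nabla u)=0$, and since $\ell(u_n)\to\ell(u)$ by $L^2$-continuity, the required inequality follows.

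The bulk of the intellectual content has been absorbed into Proposition~\ref{prop:compactness}, which lifts the Bourgain--Brezis--Mironescu--Ponce nonlocal-to-local compactness to the mixed Lebesgue $L^{\infty,2}$ setting, and is the step that would have been the real obstacle had it not already been established. With that tool and Proposition~\ref{prop:1sidenew} in hand, the $\Gamma$-convergence proof itself is essentially bookkeeping, and the only slightly delicate point is the promotion of the specific recovery pair $(v_k,\delta_k)$ into a recovery sequence along an arbitrary $\delta_n\searrow 0$, which is handled by the monotone $\delta$-dependence of the Taylor remainder noted above.
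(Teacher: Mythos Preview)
Your proposal is correct and follows essentially the same approach as the paper: Proposition~\ref{prop:1sidenew} for the $\limsup$/recovery half and Proposition~\ref{prop:compactness} for the $\liminf$ half, with the continuity of $\ell$ handling the linear part. Your additional diagonal argument---promoting the specific pair $(v_k,\delta_k)$ from Proposition~\ref{prop:1sidenew} to a recovery sequence along an \emph{arbitrary} horizon sequence $\delta_n\searrow 0$ via the monotone Taylor remainder---is a genuine and useful elaboration that the paper leaves implicit.
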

\begin{proof}
  Proposition~\ref{prop:1sidenew} provides us with the \(\limsup\)-inequality for \(\Gamma\)-convergence and a suitable recovery sequence.
  Proposition~\ref{prop:compactness} asserts that sequences of feasible points for~\eqref{eq:limiting_nl_dual_nosym} are relatively compact in \(L^2(\O_{\bar{\delta}})\) with limit points, which are feasible in~\eqref{eq:limiting_local_predual_relax}.
  Note that the objective functions in all these problems, when finite, reduce to \(\ell\), which is continuous with respect to convergence in \(L^2(\O_{\bar{\delta}})\).  This is sufficient to conclude, that \(\liminf\)-inequality for \(\Gamma\)-convergence holds.
  It remains to appeal to the equivalent definition of \(\Gamma\)-convergence, see for example~\cite[Theorem~2.1]{braides2006handbook}.
\end{proof}

We can see that the difference between the results in Section~\ref{sec:analysis} and the present one boils down to assuming, or not assuming the anti-symmetry of fluxes, or equivalently the existence of the gap between \(d_{\delta,a}^*\) and \(d_{\delta}^*\).
We would like to conclude this work by discussing the issue of anti-symmetry with the help of an example, which shows that the values of the functions \(\|\cdot\|_{L^{\infty,2}(\O\times\O)}\) and \(\|\cdot\|^*_{L^{1,2}_a(\O\times\O)}\) participating in the definition of the problems~\eqref{eq:limiting_nl_dual_nosym} and~\eqref{eq:limiting_nl_dual_asym}, may differ significantly even on anti-symmetric functions.
It is therefore possible that the optimal values \(d_{\delta,a}^*\) and \(d_{\delta}^*\) differ for each \(\delta>0\), but we do not know what happens in the limit \(\delta\searrow 0\).
\begin{example}
In this example we consider \(\O=(0,1)\subset \R^1\), but the same conclusions may be reached in higher-dimensions as well. (In fact, we could simply assume an ``extruded'' situation, where functions are constant along other coordinate directions.)
Let \(\pnl \in L^{\infty,2}_a(\O\times\O)\) be given by \(\pnl(x,x') = -\sin(\pi x) + \sin(\pi x')\).
A direct computation shows that \(\|\pnl\|_{L^{\infty,2}(\O\times\O)} = 2^{-1/2}\).
We will now approximately compute the two dual norms
\begin{align}
\label{eq:dual_norm_nosym}
\|\pnl\|_{L^{\infty,2}(\O\times\O)} &= \sup_{\qnl\in B_{L^{1,2}(\O\times\O)}} \langle\pnl,\qnl\rangle, \quad\text{and}\\
\|\pnl\|^*_{L^{1,2}_a(\O\times\O)} &= \sup_{\qnl\in B_{L^{1,2}_a(\O\times\O)}} \langle\pnl,\qnl\rangle,
\label{eq:dual_norm_asym}
\end{align}
which determine the feasible sets in the problems~\eqref{eq:limiting_nl_dual_nosym} and~\eqref{eq:limiting_nl_dual_asym}.
We subdivide of \(\O\times\O\) into \(N\times N\) equal square cells, and let the suprema in the  expressions above be computed over
piecewise-constant splines respecting the subdivision.
In this way, the computation transforms naturally into solving two linear programs with linear and second-order conic constraints, which can be solved by most modern interior point solvers; we use SDPT3~\cite{sdpt3} interfaced through CVX~\cite{cvx}.
The results are shown in Figure~\ref{fig1}, where
the gap between the two suprema is quite apparent.
The optimal solutions to the discretized versions of~\eqref{eq:limiting_nl_dual_nosym} and~\eqref{eq:limiting_nl_dual_asym} also look very distinct.
Without assuming anti-symmetry the supremum is not attained in \(L^{1,2}(\O\times\O)\), and the discrete solutions lose anti-symmetry and become increasingly concentrated along two boundaries of \(\O\times\O\), consistent with the fact that these extremal problems in \(L^1(\O;L^2(\O))\) should be further relaxed into the space of Radon measures.
At the same time, when the anti-symmetry is assumed, the corresponding optimization problem seems to lose its sparcity-encouraging property.
\begin{figure}[htb]
    \centering
    \begin{tabular}{cc}
    \includegraphics[width=0.45\textwidth]{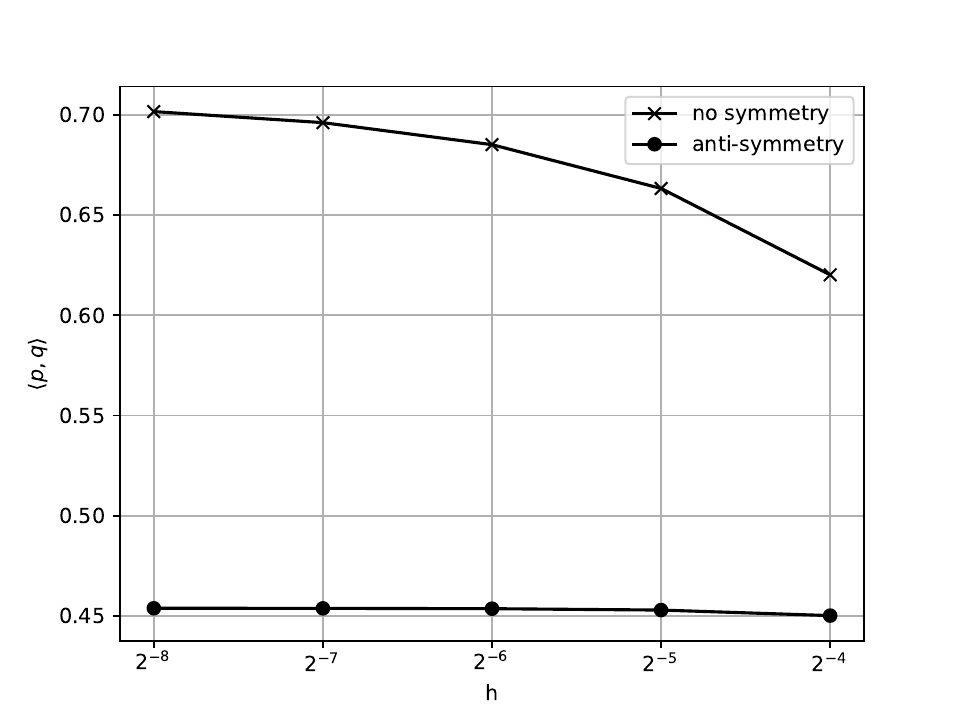}&
    \includegraphics[width=0.45\textwidth]{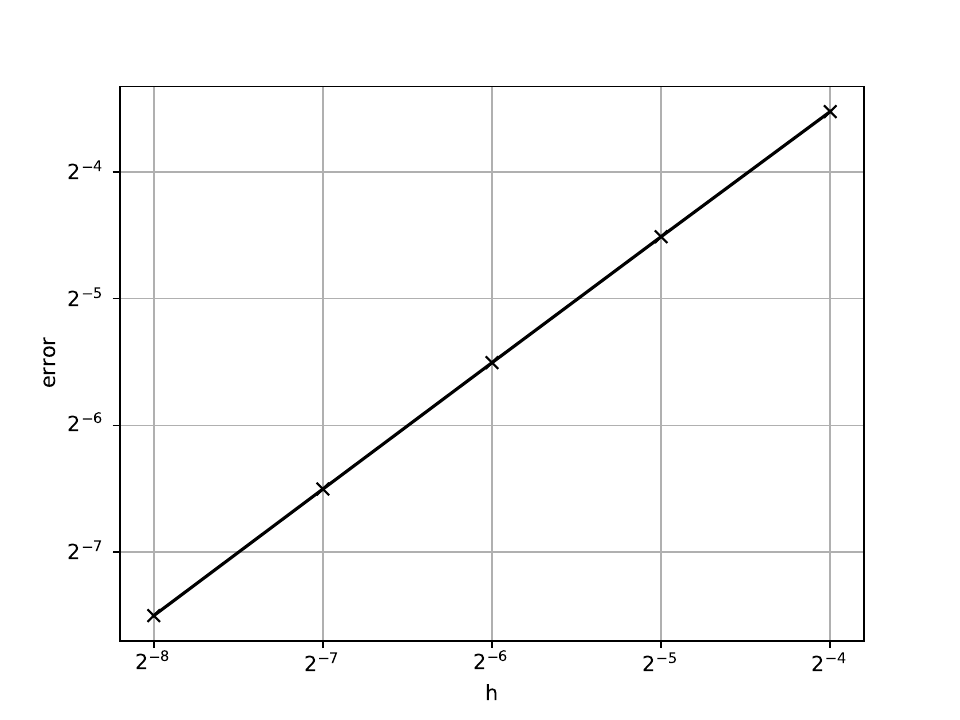}\\
    (a)&(b)\\
    \includegraphics[width=0.45\textwidth]{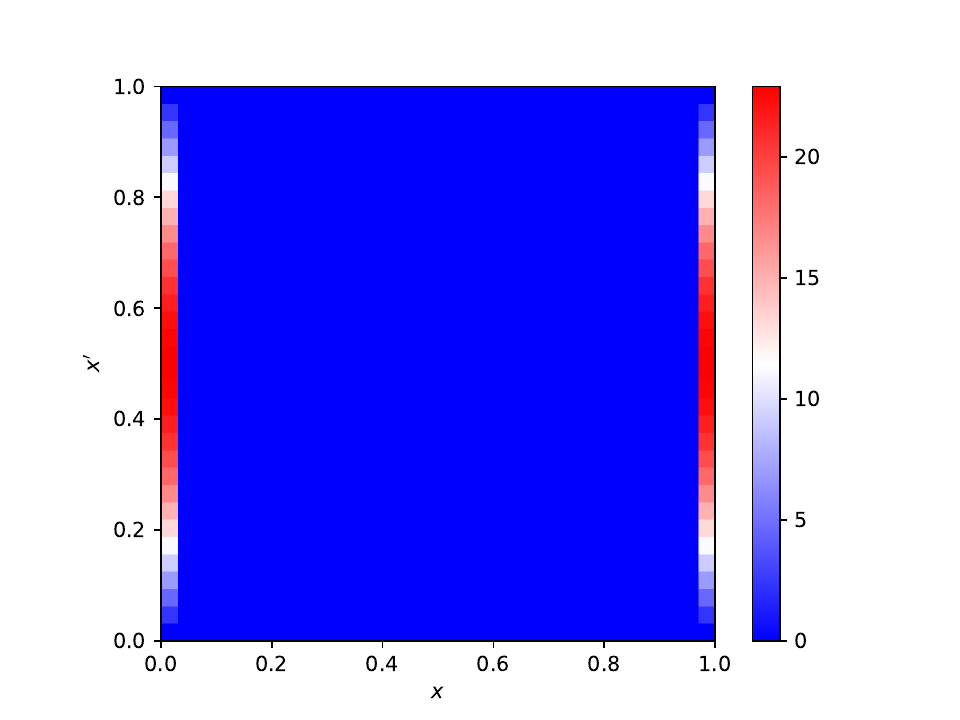}&
    \includegraphics[width=0.45\textwidth]{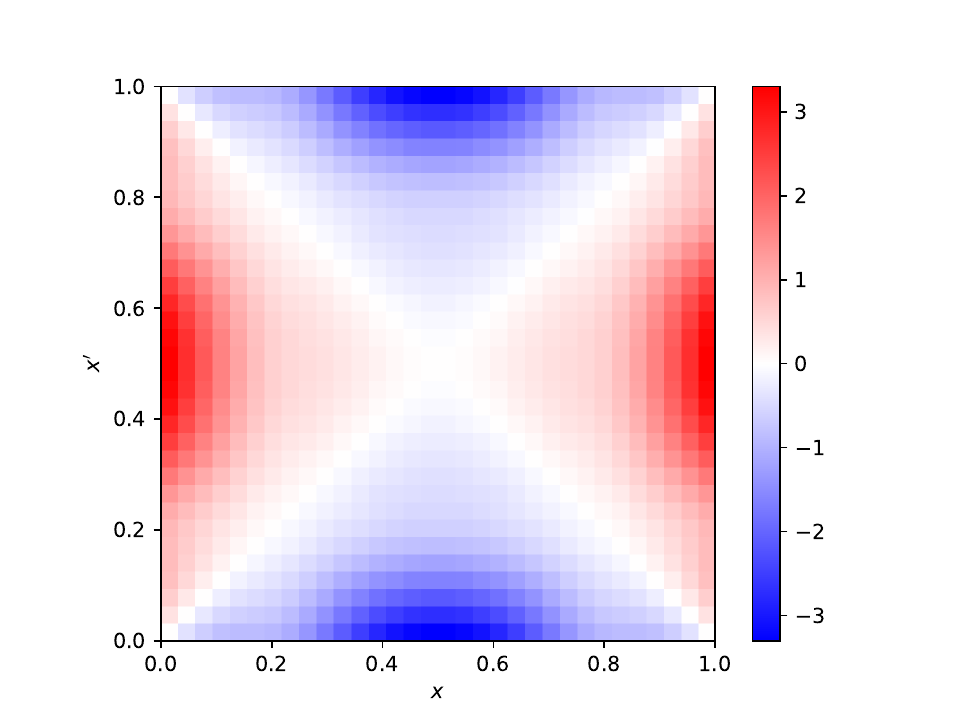}\\
    (c)&(d)
    \end{tabular}
    \caption{Approximate computation of the dual norm, with or without assuming anti-symmetry.
    (a): Approximate value of suprema as a function of the cell size.
    The gap between the two suprema is apparent.
    (b): Discrepancy between the known analytical value of the norm and its numerical approximation, as a function of the cell size.
    (c): Optimal solution \(\qnl\in B_{L^{1,2}(\O\times\O)}\) to the discretization of~\eqref{eq:dual_norm_nosym} on a \(32\times 32\) grid.
    (d): Optimal solution \(\qnl\in B_{L^{1,2}_a(\O\times\O)}\) to the discretization of~\eqref{eq:dual_norm_asym} on a \(32\times 32\) grid.}
    \label{fig1}
\end{figure}
\end{example}

\section*{Acknowledgement}
AE is grateful to Jens Gravesen and Christian Henriksen for a discussion of Lebesgue spaces with mixed
exponents, and to Jakob Lemvig for pointing out the reference~\cite{benedek1961space}.
JCB acknowledges financial support from Spanish Agencia Estatal de Investigaci\'on through project PID2020-116207GB-I00 and Junta de Comunidades de Castilla-La Mancha through project  SBPLY/19/180501/000110.

\bibliographystyle{plain}
\bibliography{nloc_sparsebasis}

\end{document}